\newtheorem{theorem}{Theorem}[section] 
\newtheorem{lemma}[theorem]{Lemma}
\newtheorem{proposition}[theorem]{Proposition}
\newtheorem{corollary}[theorem]{Corollary}
\theoremstyle{definition}
\newtheorem{definition}[theorem]{Definition}
\newtheorem{remark}[theorem]{Remark}
\newtheorem{example}[theorem]{Example}
\newcommand{\RR}{\mathbb{R}}
\newcommand{\NN}{\mathbb{N}}
\DeclareMathOperator{\supp}{supp}
\DeclareMathOperator{\vol}{vol}
\DeclareMathOperator{\Val}{Val}
\newcommand{\calH}{\mathcal{H}}
\newcommand{\calK}{\mathcal{K}}
\newcommand{\wt}{\widetilde}
\DeclareMathOperator{\Sym}{Sym}
\newcommand{\sL}{\mathring{\mathrm L}}
\def\dottedbox{\tikz\node[draw=black,dotted] {\vphantom{.}};}
\title{Dually Lorentzian Polynomials}
\author{Julius Ross}
\author{Hendrik S\"uss}
\author{Thomas Wannerer}
\address{Mathematics, Statistics and Computer Science, University of Illinois at Chicago, 851 S. Morgan Street, 322 Science and Engineering Offices, Chicago, IL~60607, United States}
\email{juliusro@uic.edu}
\address{Friedrich-Schiller-Universit\"at Jena, Fakult\"at f\"ur Mathematik und Informatik, Institut f\"ur Mathematik, Ernst-Abbe-Platz 2, 07743 Jena, Germany}
\email{hendrik.suess@uni-jena.de}
\address{Friedrich-Schiller-Universit\"at Jena, Fakult\"at f\"ur Mathematik und Informatik, Institut f\"ur Mathematik, Ernst-Abbe-Platz 2, 07743 Jena, Germany}
\email{thomas.wannerer@uni-jena.de}
\subjclass[2020]{32J27, 52A39 (Primary) 52B40, 14C17, 52A40 (Secondary)}
\DeclareMathOperator{\GL}{GL}
\begin{document}

\begin{abstract}
We introduce and study a notion of dually Lorentzian polynomials, and show that if $s$ is non-zero and dually Lorentzian then the operator $$s(\partial_{x_1},\ldots,\partial_{x_n}):\mathbb R[x_1,\ldots,x_n] \to \mathbb R[x_1,\ldots,x_n]$$ preserves (strictly) Lorentzian polynomials.  From this we conclude that any theory that admits a mixed Alexandrov-Fenchel inequality also admits a generalized Alexandrov-Fenchel inequality involving dually Lorentzian polynomials.  As such we deduce generalized Alexandrov-Fenchel inequalities for mixed discriminants, for integrals of K\"ahler classes, for mixed volumes, and in the theory of valuations.
\end{abstract}
\maketitle
\section{Introduction} 

Lorentzian polynomials, recently introduced by Br{\"a}nd{\'e}n-Huh \cite{BrandenHuh}, have coefficients that satisfy a form of log-concavity,  and have been used to prove, reprove, and conjecture various combinatorial statements coming from convex geometry,  representation theory, and  the theory of matroids.

Accepting that being Lorentzian is a useful concept, it is natural to ask for linear operators that preserve this property, for one may consider such operators  as analogous to operations that preserve log-concavity of sequences.   To this end we introduce the following notion.

\begin{definition} Let $s\in \mathbb R[x_1,\ldots,x_n]$ be a polynomial of multi-degree at most $(\kappa_1,\ldots,\kappa_n)$.  We say that $s$ is \emph{dually Lorentzian} if the polynomial
$$ s^{\vee}(x_1,\ldots,x_n):= N(x_1^{\kappa_1}\cdots x_n^{\kappa_n} s(x_1^{-1},\ldots,x_n^{-1}))$$
is Lorentzian.
\end{definition}
Here $N$ is the normalization defined to be the linear operator such that $N(x^{\alpha}) = (\alpha !)^{-1} x^\alpha$.   This notion is well defined in that being dually Lorentzian is independent of choice of the $\kappa_i$.  Examples of dually Lorentzian polynomials include Schur and Schubert polynomials (Theorem  \ref{thm:schubert}) but there are many other examples of dually Lorentzian polynomials including some that are not Schur positive (Example \ref{ex:duallylorentzianbutnotschurpositive}). \vspace{0mm}

Our main statement about such polynomials is the following:

\begin{theorem}[Theorem \ref{thm:operator-criterion}, Theorem \ref{thm:strictly-lorentzian}]\label{thm:intropreserve}
Suppose that $s\in \mathbb R[x_1,\ldots,x_n]$ is non-zero.  Then $s$ is dually Lorentzian if and only if the operator
$$\partial_s  = s(\partial_{x_1},\ldots,\partial_{x_n}) : \mathbb R[x_1,\ldots,x_n] \to  \mathbb R[x_1,\ldots,x_n]$$
preserves the property of being (strictly) Lorentzian. 
\end{theorem}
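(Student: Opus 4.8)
The plan is to prove both implications, reducing immediately to the case that $s$ is homogeneous of degree $k$: if it were not, $s^{\vee}$ would be non-homogeneous (passing to $s^{\vee}$ only reflects and rescales monomials, with no cancellation) and $\partial_s$ would carry homogeneous polynomials to non-homogeneous ones, so neither property could hold. For the implication ``$\partial_s$ preserves Lorentzian polynomials $\Rightarrow$ $s$ is dually Lorentzian'', the key elementary observation is the identity $\partial_s(x_1^{\kappa_1}\cdots x_n^{\kappa_n})=\kappa_1!\cdots\kappa_n!\cdot s^{\vee}$, which is immediate from $\partial^{\beta}x^{\kappa}=\frac{\kappa!}{(\kappa-\beta)!}x^{\kappa-\beta}$ and the definition of $s^{\vee}$. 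As monomials are Lorentzian, $x^{\kappa}$ lies in the Lorentzian cone, so if $\partial_s$ preserves Lorentzianity then $s^{\vee}$ is Lorentzian. For the strictly Lorentzian variant one instead approximates $x^{\kappa}$ by strictly Lorentzian polynomials $f_{\varepsilon}\to x^{\kappa}$ (the Lorentzian cone being the closure of the open cone) and observes $\partial_s f_{\varepsilon}\to\kappa!\,s^{\vee}$; and conversely, once the opposite implication is known for Lorentzian polynomials, its strict form is automatic, because $\partial_s\colon\mathbb{R}[x_1,\dots,x_n]_d\to\mathbb{R}[x_1,\dots,x_n]_{d-k}$ is surjective --- being adjoint, in the apolarity pairing, to the injective operation of multiplication by $s$ --- hence an open map, hence sends the interior of the Lorentzian cone, which is exactly the set of strictly Lorentzian polynomials, into the interior.

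For the substantive implication, I would assume $s^{\vee}$ Lorentzian and take $f$ Lorentzian of degree $d\ge k$. The coefficients of $\partial_s f$ are nonnegative, since those of $s$ are positive multiples of the nonnegative coefficients of $s^{\vee}$. Because $\partial_s$ commutes with every $\partial_{x_i}$ and derivatives of Lorentzian polynomials are Lorentzian, each codimension-two Hessian slice of $\partial_s f$ has the form $\partial_s(\partial^{\gamma}f)$ with $\partial^{\gamma}f$ Lorentzian of degree $k+2$, so the analytic content reduces to the quadratic claim: \emph{for $h$ Lorentzian of degree $k+2$, the quadratic form $\partial_s h$ has at most one positive eigenvalue.} The remaining requirement, that $\supp(\partial_s f)$ be M-convex, I regard as a separate combinatorial point, to be derived from M-convexity of $\supp f$ and of $\supp s$ --- the latter being equivalent to M-convexity of $\supp s^{\vee}$ by polymatroid duality --- together with the stability of M-convexity under the support operation induced by $\partial_s$.

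The quadratic claim is an Alexandrov--Fenchel / Hodge-index inequality, and it is precisely here that Lorentzianity of $s^{\vee}$, and not merely M-convexity of $\supp s$, is indispensable. I would phrase it via apolar (Macaulay dual) algebras. The quadratic form $\partial_s h$ is the Macaulay dual of the graded Artinian Gorenstein algebra $A_h/(0:\bar s)$, where $A_h=\mathbb{R}[y_1,\dots,y_n]/\mathrm{Ann}(h)$ with $y_i$ acting as $\partial_{x_i}$, and the assertion that $\partial_s h$ has at most one positive eigenvalue is exactly the degree-one Hodge--Riemann relation for this quotient. Using $s^{\vee}=\frac1{\kappa!}\partial_s(x^{\kappa})$, the hypothesis says precisely that $A_{x^{\kappa}}/(0:\bar s)$ satisfies Hodge--Riemann in degree one, where $A_{x^{\kappa}}=\mathbb{R}[y]/(y_1^{\kappa_1+1},\dots,y_n^{\kappa_n+1})$ is the cohomology ring of a product of projective spaces and manifestly does; and since $h$ has multidegree at most $\kappa$ there is a graded surjection $A_{x^{\kappa}}\twoheadrightarrow A_h$.

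The hard part, and what I expect to be the main obstacle, is then the transfer step: if $B$ is a graded Artinian Gorenstein algebra satisfying the degree-one Hodge--Riemann relation (supplied by Lorentzianity of $h$) and $\xi\in B$ is an element whose image in the product-of-projective-spaces model above defines a quotient satisfying the degree-one Hodge--Riemann relation (supplied by Lorentzianity of $s^{\vee}$), then $B/(0:\xi)$ again satisfies it. The natural route is the usual Lefschetz-package bootstrap: establish it first for strictly Lorentzian $h$, where $A_h$ carries a full Lefschetz package, by pinning down the signature of the quotient pairing against the explicit monomial computation available in the model, and then pass to general Lorentzian $h$ by continuity of the Lorentzian cone. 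A more hands-on alternative is to polarize $h$ and $s$ simultaneously, reducing to the multilinear setting where $\partial_s h$ becomes the adjacency form of a weighted graph that one checks directly to be complete multipartite with a single positive eigenvalue; but the Hodge-theoretic version is the one that interfaces cleanly with the stated applications to mixed volumes, integrals of K\"ahler classes, mixed discriminants, and valuations.
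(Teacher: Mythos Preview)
Your easy direction and the strictly-Lorentzian upgrade match the paper: it too tests $\partial_s$ on the Lorentzian monomial $x^\kappa$ for the converse, and deduces the strict case from the non-strict one via surjectivity of $\partial_s$ combined with $\sL_n^d=\operatorname{int}\mathrm L_n^d$. (The paper proves surjectivity by a lexicographic induction on monomials; your apolarity-duality argument is a valid and arguably cleaner alternative.)

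For the forward implication the paper takes a completely different and much shorter route. Instead of reducing to degree $k+2$ and attempting a Hodge--Riemann transfer between Artinian Gorenstein quotients, it invokes the Br\"and\'en--Huh symbol criterion \cite[Thm.~3.2]{BrandenHuh}: a homogeneous linear operator $T$ on $\RR_\gamma[x]$ preserves Lorentzian polynomials whenever its symbol $\operatorname{sym}_T(x,w)=\sum_{\beta\le\gamma}\binom{\gamma}{\beta}T(x^\beta)w^{\gamma-\beta}$ is Lorentzian. A short calculation shows that $\operatorname{sym}_{\partial_s}(x,w)$ is a function of $z=x+w$ alone, equal to $\gamma!\,N(z^\gamma s(z^{-1}))$, so by Theorem~\ref{thm:changeVar}(1) it is Lorentzian exactly when $s^\vee$ is. This single computation certifies the eigenvalue condition and the M-convexity of supports simultaneously.

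Your route, by contrast, leaves real work undone at precisely the two places you flag. The M-convexity of $\supp(\partial_s f)$ is not a standard polymatroid fact and you give no argument; the example $f=x_1^3+x_2^3$ already shows that the eigenvalue condition on all codimension-two slices does not by itself force M-convexity, so it genuinely has to be checked. More seriously, the ``transfer step'' --- descending the degree-one Hodge--Riemann relation from $A_{x^\kappa}/(0:\bar s)$ to $A_h/(0:\bar s)$ along the surjection $A_{x^\kappa}\twoheadrightarrow A_h$ --- is not formal: Hodge--Riemann properties do not in general descend along graded surjections of Gorenstein algebras, and neither your signature-comparison sketch nor the polarization alternative is developed far enough to assess. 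Had you organized the whole argument as ``strict first, then close up'' (not just inside the quadratic step), the M-convexity issue would evaporate, since strictly Lorentzian polynomials have full support; but the transfer step would remain the crux, and that is exactly what the symbol criterion packages for you.
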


We will also show that this statement extends to the class of $C$-Lorentzian polynomials of Br\"and\'en and Leake \cite{BrandenLeake:CLorentzian} (Theorem \ref{thm:invarianceC}).   These seemingly simple statements quickly give rise to a number of interesting applications.  As advocated by Huh \cite{MR3966524}, a theme running through many areas of mathematics is the so-called ``K\"ahler package" whose axioms are essentially a form of Poincar\'e duality, hard Lefschetz theorem and the Hodge-Riemann bilinear relations.  From any such package one obtains a Lorentzian polynomial, analogous to the volume polynomial of a collection of ample line bundles on a projective variety, or to the volume polynomial of a collection of convex bodies.   As such one obtains for each dually Lorentzian polynomial some log-concavity type statements coming from this K\"ahler package.   Said another way, whenever one has a theory that admits a mixed Alexandrov-Fenchel (AF) inequality, the corresponding volume polynomial will be Lorentzian and we can deduce a generalized AF inequality involving any dually Lorentzian polynomial.

We illustrate this idea with the following applications, with the expectation that many more are possible.

\begin{theorem}[=Corollary \ref{cor:duallorentizanlogconcave}, Log-concavity of Derived Polynomials]\label{thm:introcombinatorics}
Let $s$ be dually Lorentzian of degree $d$ and define polynomials $s^{(j)}(x_1,\ldots,x_n)$ by requiring $$s(x_1+u,\ldots,x_n+u) = \sum_{j=0}^d s^{(j)}(x_1,\ldots,x_n) u^j.$$  Then for fixed $t_1,\ldots,t_n\in \mathbb R_{\ge 0}$ the map
$$j\mapsto s^{(j)}(t_1,\ldots,t_n)$$
is log-concave.
\end{theorem}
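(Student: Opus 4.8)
The plan is to deduce everything from one clean reduction: I claim the theorem is equivalent to the assertion that the bivariate polynomial
$$Q_t(u,v)\;:=\;s(u+t_1v,\;\ldots,\;u+t_nv)\;\in\;\RR[u,v]$$
is \emph{dually} Lorentzian. Indeed, since $s$ is dually Lorentzian, $s^\vee$ is Lorentzian and hence homogeneous, which forces $s$ to be homogeneous of degree $d$; then each $s^{(j)}$ is homogeneous of degree $d-j$, so $Q_t$ is homogeneous of degree $d$ with $Q_t=\sum_{j=0}^d s^{(j)}(t)\,u^jv^{d-j}$. Moreover a short computation shows that a homogeneous $p=\sum_{j=0}^d c_j u^jv^{d-j}\in\RR[u,v]$ with nonnegative coefficients is dually Lorentzian if and only if $(c_j)_j$ is a log-concave sequence with no internal zeros: taking $\kappa=(d,d)$ one finds $p^\vee=\sum_k \tfrac{c_{d-k}}{k!(d-k)!}u^kv^{d-k}$, whose normalised coefficients are $c_{d-k}/d!$, and the two-variable case of the Br\"and\'en--Huh criterion \cite{BrandenHuh} says precisely that this is Lorentzian iff those coefficients are nonnegative, log-concave and supported on an interval. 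Since $Q_t\neq 0$ (its coefficient of $u^d$ is $s(1,\ldots,1)>0$), it suffices to show $Q_t$ is dually Lorentzian.

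The key input is that dually Lorentzian polynomials are closed under precomposition with nonnegative linear maps. Concretely, I would prove: if $s\in\RR[x_1,\ldots,x_n]$ is dually Lorentzian and $A=(a_{ij})$ is a nonnegative $n\times m$ matrix, viewed as a map $\RR^m\to\RR^n$, then (provided $s\circ A\neq 0$) the polynomial $s\circ A\in\RR[y_1,\ldots,y_m]$ is dually Lorentzian. By Theorem~\ref{thm:intropreserve} it is enough to check that $\partial_{s\circ A}$ sends Lorentzian polynomials to Lorentzian polynomials. So let $g\in\RR[y_1,\ldots,y_m]$ be Lorentzian and put
$$\hat g(x,y)\;:=\;g\!\left(y_1+\textstyle\sum_i a_{i1}x_i,\;\ldots,\;y_m+\sum_i a_{im}x_i\right),$$
which is $g$ precomposed with a nonnegative linear map and hence Lorentzian in $\RR[x_1,\ldots,x_n,y_1,\ldots,y_m]$. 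Since $s$, regarded with zero extra multidegrees as a polynomial in these $n+m$ variables, is again dually Lorentzian, Theorem~\ref{thm:intropreserve} (together with the fact that a Lorentzian polynomial stays Lorentzian after adjoining a variable it does not involve) shows that $\partial_s=s(\partial_{x_1},\ldots,\partial_{x_n})$ preserves Lorentzian polynomials in this larger ring. Now the chain rule gives $\partial_{x_i}\hat g=(L_i g)(y+A^{T}x)$ with $L_i=\sum_k a_{ik}\partial_{y_k}$; iterating and using that $\partial_{s\circ A}=s(L_1,\ldots,L_n)$, one obtains $\partial_s\hat g=(\partial_{s\circ A}g)(y+A^{T}x)$. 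Hence $\partial_s\hat g$ is Lorentzian, and specialising $x_1=\cdots=x_n=0$ (again a nonnegative-linear specialisation) shows $\partial_{s\circ A}g$ is Lorentzian, as desired.

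To finish, apply this to the nonnegative linear map $A:\RR^2\to\RR^n$, $(u,v)\mapsto(u+t_1v,\ldots,u+t_nv)$: it shows $Q_t=s\circ A$ is dually Lorentzian, and then the first paragraph yields that $\big(s^{(j)}(t)\big)_{j=0}^d$ is nonnegative, has no internal zeros, and is log-concave. The main obstacle is conceptual rather than computational. One has to recognise that the correct target is that $Q_t$ is \emph{dually} Lorentzian and not Lorentzian: Lorentzianity of $Q_t$ would give the strictly stronger ultra-log-concavity $s^{(j)}(t)^2\ge\frac{(j+1)(d-j+1)}{j(d-j)}\,s^{(j-1)}(t)\,s^{(j+1)}(t)$, which already fails for $s=x_1^2+x_1x_2+x_2^2$. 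Once this is understood, the proof rests on the closure of dually Lorentzian polynomials under nonnegative substitution, where the auxiliary-variable bookkeeping, though routine, must be arranged with care.
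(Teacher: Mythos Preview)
Your proof is correct and takes a genuinely different route from the paper's.

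The paper argues directly on the Lorentzian side: it writes down the explicit Lorentzian polynomial $p(x,u,v)=(\sum_i x_i+v)^d(\sum_i t_ix_i+u)^d$, applies $\partial_s$ in the $x$-variables (using Theorem~\ref{thm:operator-criterion}), restricts to $x=0$, and obtains that $\sum_j\binom{d}{j}s^{(j)}(t)\,u^jv^{d-j}$ is Lorentzian; the bivariate characterization of Lorentzian polynomials then yields ultra-log-concavity of $\big(\binom{d}{j}s^{(j)}(t)\big)_j$, which is exactly log-concavity of $\big(s^{(j)}(t)\big)_j$. You instead stay on the \emph{dually} Lorentzian side: you reduce to showing $Q_t(u,v)=\sum_j s^{(j)}(t)\,u^jv^{d-j}$ is dually Lorentzian, and you obtain this from closure of the class under nonnegative linear substitution applied to $(u,v)\mapsto(u+t_1v,\ldots,u+t_nv)$. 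The two targets are equivalent---up to swapping $u\leftrightarrow v$ and a scalar, the paper's Lorentzian polynomial is precisely $Q_t^\vee$---so the difference is really in how one gets there.

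Two remarks worth noting. First, closure of dually Lorentzian polynomials under $s\mapsto s\circ A$ with $A$ nonnegative is exactly the paper's Theorem~\ref{thm:linear-trafo}, but your proof of it is different and arguably slicker: the paper proves it by a limiting argument through dilations, elementary splittings, and diagonalizations, whereas you deduce it in one stroke from the operator characterization via the auxiliary polynomial $\hat g(x,y)=g(y+A^Tx)$ and the identity $\partial_s\hat g=(\partial_{s\circ A}g)(y+A^Tx)$. That is a nice alternative argument for Theorem~\ref{thm:linear-trafo}. Second, your framing makes transparent why only log-concavity (and not ultra-log-concavity) is the right conclusion, and your counterexample $s=x_1^2+x_1x_2+x_2^2$ (take e.g.\ $t=(1,0)$, giving $s^{(0)}=1$, $s^{(1)}=3$, $s^{(2)}=3$) confirms that $Q_t$ itself need not be Lorentzian. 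The paper's route gives the same final bound but obscures this distinction, since the binomial factors inserted into its Lorentzian bivariate polynomial are exactly what downgrade ultra-log-concavity to log-concavity.
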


\begin{theorem}[=Theorem \ref{thm:mixeddiscriminants},\! Generalized AF inequality for Mixed Discriminants] \label{thm:intromixedd} Let $V$ be a $d$-dimensional real vector space and $s$ be a non-zero dually Lorentzian polynomial of degree $d-2$ in $n$ variables.  Suppose $q_1,\ldots, q_{n}$, $q$ are positive definite quadratic forms on $V$ and  $p$ is an arbitrary quadratic form.  Then the mixed discriminant $D$ satisfies
	\begin{equation} D(p,q, s(q_1,\ldots,q_n))^2 \geq D(p,p, s(q_1,\ldots,q_n)) D(q,q, s(q_1,\ldots,q_n))\end{equation}
 and  equality holds if and only if $p$ is proportional to $q$.
\end{theorem}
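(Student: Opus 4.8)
The strategy is to reduce the generalized inequality to the classical mixed‑discriminant Alexandrov–Fenchel inequality by exhibiting the relevant quantity as a coefficient of a Lorentzian polynomial, and then invoke Theorem \ref{thm:intropreserve} to handle the insertion of $s$. First I would recall that the mixed discriminant $D$ of quadratic forms on a $d$‑dimensional space $V$ is, up to a normalizing constant, the polarization of the determinant, so that the ``volume polynomial'' $P(y_1,\ldots,y_m)=\det(y_1 A_1+\cdots+y_m A_m)$ associated to positive semidefinite forms $A_1,\ldots,A_m$ is Lorentzian on the positive orthant — this is one of the motivating examples in \cite{BrandenHuh}, coming from the Kähler package for $\mathbb{P}(V)$ (or directly from the classical Alexandrov theory of mixed discriminants). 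The classical mixed AF inequality $D(p,q,w)^2\ge D(p,p,w)D(q,q,w)$ for a ``mixed'' positive tuple $w$ of $d-2$ positive definite forms is exactly the statement that the relevant $2\times 2$ Hessian of this volume polynomial is Lorentzian, i.e.\ has signature $(+,-)$ when $p,q$ range over a plane, with degeneracy only along $p\parallel q$.

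The key step is then to pass from the tuple $w=(q_1,\ldots,q_1,q_2,\ldots)$ of fixed multiplicity to the combination $s(q_1,\ldots,q_n)$ for an arbitrary non‑zero dually Lorentzian $s$ of degree $d-2$. Concretely, I would form the polynomial in $n+2$ auxiliary variables
\begin{equation}
F(u,v,x_1,\ldots,x_n) \;=\; D\bigl(p,q,\text{``}(u\,p+v\,q)\text{ omitted''}\bigr)\cdots
\end{equation}
— more precisely, consider $G(u,v,x_1,\ldots,x_n)$ defined so that the coefficient of $u^a v^b x_1^{c_1}\cdots x_n^{c_n}$ (with $a+b+\sum c_i=d$) is the mixed discriminant of $a$ copies of $p$, $b$ copies of $q$, and $c_i$ copies of $q_i$. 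By the Alexandrov theory this $G$ is Lorentzian (it is the volume polynomial of the positive forms $p,q,q_1,\ldots,q_n$, with the caveat that $p$ need not be positive definite — see below). Now apply the operator $\partial_s = s(\partial_{x_1},\ldots,\partial_{x_n})$: since $s$ is dually Lorentzian and non‑zero, Theorem \ref{thm:intropreserve} guarantees $\partial_s G$ is again Lorentzian, and a direct computation identifies $\partial_s G$, up to a positive constant and the substitution $x_i\mapsto$ (suitable value), with the polynomial $u,v\mapsto D\bigl(up+vq,\,up+vq,\,s(q_1,\ldots,q_n)\bigr)$ in the two variables $u,v$. A degree‑two Lorentzian polynomial in two variables is precisely a quadratic form of Lorentzian signature (or a perfect square), which unwinds exactly to
\begin{equation}
D(p,q,s(q_1,\ldots,q_n))^2 \;\ge\; D(p,p,s(q_1,\ldots,q_n))\,D(q,q,s(q_1,\ldots,q_n)),
\end{equation}
with equality iff the associated quadratic form in $(u,v)$ is degenerate, i.e.\ iff $p$ and $q$ are proportional.

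The main obstacle I anticipate is twofold. First, the hypothesis allows $p$ to be an \emph{arbitrary} quadratic form, not positive definite, whereas the Lorentzian framework naturally handles nonnegative coefficient arrays; the standard fix is a continuity/perturbation argument — prove the inequality first for $p$ positive definite (where $G$ is genuinely Lorentzian), then extend to all $p$ by writing $p$ as a limit of $p+\varepsilon q$ or by the usual observation that both sides are polynomials in the entries of $p$ and the inequality is closed — and one must be careful that this does not destroy the equality characterization, which should instead be extracted directly from strict Lorentzianity when $q_1,\ldots,q_n,q$ are positive definite. Second, one needs the equality case, which requires the \emph{strict} part of Theorem \ref{thm:intropreserve} (Theorem \ref{thm:strictly-lorentzian}): when the $q_i$ are positive definite the form $s(q_1,\ldots,q_n)$ lies in the interior of the relevant cone, so $\partial_s G$ is strictly Lorentzian in the $u,v$ directions away from the locus $p\parallel q$, pinning down equality exactly there. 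I would also double‑check the bookkeeping of normalizing constants $N$ and factorials relating $\partial_s$ acting on the volume polynomial to the substitution $s(q_1,\ldots,q_n)$ into $D$, since the definition of dually Lorentzian involves the normalization operator $N$ and the mixed discriminant carries its own combinatorial normalization.
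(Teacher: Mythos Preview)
Your outline correctly derives the inequality: with all forms positive definite, $G(u,v,x)=\det(up+vq+\sum x_i q_i)$ is Lorentzian, and $\partial_s G\big|_{x=0}$ is (up to a positive constant) the two-variable form $D((up+vq)^2,s(q_1,\ldots,q_n))$; continuity then handles arbitrary $p$.

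The equality case, however, has a genuine gap. Strict Lorentzianity of $G$ in the $(n+2)$ variables requires $p,q,q_1,\ldots,q_n$ to be positive definite \emph{and linearly independent} in $\Sym^2 V^*$; neither the positivity of $p$ nor the independence of the $q_i$ is part of the hypothesis, so Theorem~\ref{thm:strictly-lorentzian} does not apply to $G$ in general. Even granting those extra assumptions, restricting a non-degenerate Lorentzian quadratic form on $\RR^{n+2}$ to the $(u,v)$-plane need not remain non-degenerate without a further argument (here it does, because $q$ gives a positive vector for the form, but you do not supply this step). And for arbitrary $p$ your limiting argument yields only the closed inequality and says nothing about when equality is attained. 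The phrase ``$s(q_1,\ldots,q_n)$ lies in the interior of the relevant cone'' does not correspond to any statement in the orthant framework you are using.

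The paper resolves this by a genuinely different route: instead of Theorem~\ref{thm:intropreserve} on the positive orthant, it uses the $C$-Lorentzian extension (Theorem~\ref{thm:invarianceC}) and works directly on the full space $\Sym^2 V^*$. The determinant is shown to be \emph{strictly $C$-Lorentzian} for any open cone $C$ whose closure lies inside the positive-definite cone and contains the $q_i$ (this is the classical AF inequality for mixed discriminants combined with Theorem~\ref{thm:strictlyClorentzian}). Then $s(D_{q_1},\ldots,D_{q_n})$ preserves strict $C$-Lorentzianity, so the quadratic form $r\mapsto D(r,r,s(q_1,\ldots,q_n))$ on \emph{all} of $\Sym^2 V^*$ is non-degenerate with exactly one positive eigenvalue (Proposition~\ref{prop:strictlyClorentzian}). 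Lemma~\ref{lemma:hodge} then delivers both the inequality and the exact equality characterization for every $p\in\Sym^2 V^*$ in one stroke, with no separate continuity argument and no linear-independence hypotheses.
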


\begin{theorem}[=Theorem \ref{thm:hodge-riemann}, Generalized AF inequality for K\"ahler classes]\label{thm:introkahler}
	Let $Y$ be a smooth complex manifold of dimension $d$ and $\omega_1, \ldots, \omega_m,\alpha$ be K\"ahler classes on $Y$.  Then for every non-zero dually Lorentzian polynomial $s$ of degree $d-2$ and all $\beta\in H_\RR^{1,1}(Y)$
	$$\left(\int_Y \alpha \beta s(\omega_1,\ldots,\omega_m) \right)^2 \ge \int_Y \alpha^2 s(\omega_1,\ldots,\omega_m)\int_Y \beta^2 s(\omega_1,\ldots,\omega_m)$$
and equality holds if and only if $\alpha$ is proportional to $\beta$.
\end{theorem}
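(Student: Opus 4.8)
The plan is to reduce the whole statement to a few properties of the symmetric bilinear form $B$ on $V:=H^{1,1}_\RR(Y)$ given by $B(\gamma,\delta):=\int_Y\gamma\delta\,s(\omega_1,\dots,\omega_m)$, and then to obtain those properties from Theorem~\ref{thm:intropreserve}. Set $N:=\dim V$. It suffices to show that $B$ is non-degenerate, has at most one positive eigenvalue, and satisfies $B(\alpha,\alpha)>0$. Granting this, $B$ has signature $(1,N-1)$ with $\alpha$ in the positive cone, so $V=\RR\alpha\oplus\alpha^{\perp_B}$ with $\alpha^{\perp_B}$ negative definite; writing $\beta=t\alpha+w$ with $t=B(\alpha,\beta)/B(\alpha,\alpha)$ and $w\in\alpha^{\perp_B}$ gives $B(\alpha,\alpha)B(\beta,\beta)=B(\alpha,\beta)^2+B(\alpha,\alpha)B(w,w)\le B(\alpha,\beta)^2$, with equality iff $w=0$, i.e.\ $\beta\in\RR\alpha$ — exactly the assertion. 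The positivity $B(\alpha,\alpha)>0$ is immediate: since $s$ is dually Lorentzian it has non-negative coefficients, and being non-zero it writes $s(\omega_1,\dots,\omega_m)$ as a non-negative, not identically zero, combination of products of $d-2$ Kähler classes, while each $\int_Y\alpha^2\omega_{i_1}\cdots\omega_{i_{d-2}}$ is a positive mixed volume of Kähler classes.

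For the remaining two properties I would realize a positive multiple of $B$ as a Hessian. Pick Kähler classes $\zeta_1,\dots,\zeta_N$ forming a basis of $V$, and write $\omega_j=\sum_k a_{jk}\zeta_k$; using that the Kähler cone is open one arranges the $\zeta_k$ so that the matrix $A=(a_{jk})$ has non-negative entries. The volume polynomial $g(z):=\int_Y(z_1\zeta_1+\dots+z_N\zeta_N)^d$ is \emph{strictly} Lorentzian: its degree-$d$ coefficients are positive mixed volumes of Kähler classes, and the Hessian of any $(d-2)$-fold partial derivative $\partial^\mu g$ is, in the basis $\zeta_\bullet$, the Gram matrix of $\gamma\mapsto\int_Y\gamma^2\zeta^\mu$, which by the mixed Hard Lefschetz theorem and the mixed Hodge--Riemann bilinear relations for the product of Kähler classes $\zeta^\mu$ is non-degenerate of signature $(1,N-1)$. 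Since applying $\sum_k a_{jk}\partial_{z_k}$ to $g$ pulls a factor $\omega_j$ under the integral, a direct computation gives, with $\hat s(z):=s(Az)$,
\[
\partial_{\hat s}\,g \;=\; \hat s(\partial_{z_1},\dots,\partial_{z_N})\,g \;=\; \tfrac{d!}{2}\sum_{k,l}z_kz_l\int_Y\zeta_k\zeta_l\,s(\omega_1,\dots,\omega_m)\;=\;\tfrac{d!}{2}\sum_{k,l}z_kz_l\,B(\zeta_k,\zeta_l).
\]

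It remains to know that $\hat s$ is again dually Lorentzian and to invoke Theorem~\ref{thm:intropreserve}. Dual Lorentzianness is preserved under non-negative linear substitutions, and this follows from the theorem itself: $\tilde g(y,z):=g(z+A^{\!\top}y)$ is Lorentzian (a non-negative substitution applied to the Lorentzian $g$); the operator identity $s(\partial_{y_1},\dots,\partial_{y_m})\tilde g = (\partial_{\hat s}g)(z+A^{\!\top}y)$ together with Theorem~\ref{thm:intropreserve} (applied to $s$ with the extra variables $z$ as dummies) shows the left side is Lorentzian; setting $y=0$ shows $\partial_{\hat s}$ preserves Lorentzian polynomials, so $\hat s$ is dually Lorentzian, again by Theorem~\ref{thm:intropreserve}. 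Now the strict half of Theorem~\ref{thm:intropreserve} gives that $\partial_{\hat s}g$ is \emph{strictly} Lorentzian; as a quadratic form in $N$ variables its Hessian is then non-degenerate of signature $(1,N-1)$ (the cases $N\le1$ being trivial), and by the displayed identity the same holds for the Gram matrix $(B(\zeta_k,\zeta_l))$. Hence $B$ is non-degenerate with exactly one positive eigenvalue, and the first paragraph finishes the proof.

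I expect the main obstacle to be precisely the equality clause. The inequality by itself is soft: it requires only that $B$ have at most one positive eigenvalue, which Theorem~\ref{thm:intropreserve} already extracts from plain Lorentzianness of a volume polynomial built from $\omega_1,\dots,\omega_m$, $\alpha$ and any Kähler classes spanning $V$. Non-degeneracy of $B$, however, forces one to apply $\partial_s$ to a \emph{strictly} Lorentzian polynomial, which must live in exactly $N=h^{1,1}(Y)$ variables — any surplus variable makes the relevant Hessian degenerate — and this is why one must first transport $s(\omega_1,\dots,\omega_m)$ to an expression in a Kähler \emph{basis} and track dual Lorentzianness along the way. For arbitrary Kähler classes $\omega_j$ one cannot always place them inside a simplicial Kähler cone, so in general I would run the identical argument with the $C$-Lorentzian polynomials of Br\"and\'en--Leake and Theorem~\ref{thm:invarianceC}, taking $C$ to be the nef cone of $Y$: by the same Hodge-theoretic input $\eta\mapsto\int_Y\eta^d$ is strictly $C$-Lorentzian, and $\partial_s$ preserves that property.
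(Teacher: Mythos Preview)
Your overall strategy is the paper's: show the volume polynomial $\eta\mapsto\int_Y\eta^d$ is strictly Lorentzian in the appropriate sense, apply $\partial_s$, and read off non-degeneracy and signature $(1,N-1)$ for $B$, then finish with Lemma~\ref{lemma:hodge}. The paper does this via $C$-Lorentzian polynomials from the outset, invoking Theorem~\ref{thm:invarianceC} and Proposition~\ref{prop:strictlyClorentzian}; your main body instead tries to stay in the ordinary Lorentzian world by choosing a K\"ahler basis $\zeta_1,\dots,\zeta_N$ with $\omega_j=\sum_k a_{jk}\zeta_k$ and $a_{jk}\ge 0$.

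You are right to flag this step, and it is a genuine gap: openness of the K\"ahler cone does \emph{not} let you enclose an arbitrary finite set of K\"ahler classes in a simplicial cone with K\"ahler generators. For instance, on a very general projective K3 surface the K\"ahler cone is the full round positive cone of a signature $(1,19)$ form; one can place finitely many classes near the boundary so that no inscribed simplicial cone contains them all. So the ordinary-Lorentzian route cannot be made to work in general, and one must pass to the $C$-Lorentzian framework, exactly as you suggest at the end.

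Your fallback is the paper's argument, with one correction: do not take $C$ to be the nef cone. That cone is closed (so Definition~\ref{def:c-lorentzian} does not apply), and even taking $C$ to be the full K\"ahler cone fails, because the strict $C$-Lorentzian criterion of Theorem~\ref{thm:strictlyClorentzian} requires positivity and non-degeneracy along the extreme rays of $\overline C$, where the mixed Hodge--Riemann relations may degenerate. The paper instead chooses, as in the proof of Theorem~\ref{thm:mixeddiscriminants}, an open convex cone $C$ containing $\omega_1,\dots,\omega_m$ with $\overline C$ contained in the K\"ahler cone; then the mixed AF inequality for K\"ahler classes makes $\eta\mapsto\int_Y\eta^d$ strictly $C$-Lorentzian, Theorem~\ref{thm:invarianceC} gives that $\partial_s$ of it is strictly $C$-Lorentzian, and Proposition~\ref{prop:strictlyClorentzian} plus Lemma~\ref{lemma:hodge} finish (using only that $\alpha$ is K\"ahler to ensure $B(\alpha,\alpha)>0$). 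Note that in this route there is no need to change variables or to invoke preservation of dual Lorentzianness under $x\mapsto Ax$; Theorem~\ref{thm:invarianceC} already allows the derivative directions $\omega_j\in C$ to be arbitrary.
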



In the next statement we are able to characterize
 the equality case for convex bodies $K$ in the class $C^2_+$ which requires $\partial K$ to be a strictly curved $C^2$ submanifold of $\mathbb R^n$. 

\begin{theorem}[=Theorem \ref{thm:genAF}, Generalized AF inequality for mixed volumes] 
 Let $K,L,C_1, \ldots, C_n$ be   convex bodies in $\RR^d$.  Then  
 \[
 V(K,L,s(C_1, \ldots, C_n))^2 \geq V(K,K,s(C_1, \ldots, C_n))V(L,L,s(C_1, \ldots, C_n))
 \]  
 for every non-zero dually Lorentzian polynomial $s$ of degree $d-2$.
Moreover, if $K$, $L$, $C_1,\ldots, C_{n}$ are of class $C_+^2$, then equality holds if and only if $K$ and $L$ are homothetic.
\end{theorem}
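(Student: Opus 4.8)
The plan is to deduce the inequality from the fact that $\partial_s$ preserves Lorentzian polynomials (Theorem~\ref{thm:intropreserve}), applied to the volume polynomial of the given bodies. First I would set $P(y_0,y_1,z_1,\dots,z_n)=\vol\bigl(y_0K+y_1L+\sum_i z_iC_i\bigr)$, which is Lorentzian of degree $d$ by the Alexandrov--Fenchel inequality (this is one of the motivating examples of Br\"and\'en--Huh). Since $s$ is dually Lorentzian, $\partial_s P=s(\partial_{z_1},\dots,\partial_{z_n})P$ is then Lorentzian of degree $2$. A short computation identifies its restriction to $\{z=0\}$ as the quadratic form in $y_0,y_1$ whose Hessian, written in the basis $y_0,y_1$, has diagonal entries $d!\,V(K,K,s(\vec C))$ and $d!\,V(L,L,s(\vec C))$ and off-diagonal entry $d!\,V(K,L,s(\vec C))$, where $s(C_1,\dots,C_n)$ denotes the multilinear substitution and $V(\,\cdot\,,\,\cdot\,,s(\vec C))$ the induced bilinear form on mixed volumes. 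Because a degree-$2$ Lorentzian polynomial has a Hessian with at most one positive eigenvalue, all its $2\times2$ principal minors are non-positive; taking the $\{y_0,y_1\}$-minor gives exactly the claimed inequality.

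For the equality statement, one implication is immediate: if $K=\lambda L+v$ then multilinearity and translation-invariance of mixed volumes give $V(K,A,s(\vec C))=\lambda V(L,A,s(\vec C))$ for every body $A$, so $V(K,L,s(\vec C))^2=\lambda^2 V(L,L,s(\vec C))^2=V(K,K,s(\vec C))V(L,L,s(\vec C))$. For the converse I would assume equality and that $K,L,C_1,\dots,C_n$ are of class $C_+^2$; in particular all bodies are full-dimensional, so $V(K,K,s(\vec C))$, $V(K,L,s(\vec C))$, $V(L,L,s(\vec C))$ are strictly positive.

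The next step --- elementary but the geometric heart of the argument --- is to reduce to the classical Alexandrov--Fenchel equality case for a single monomial of $s$. Write $s=\sum_\mu\lambda_\mu z^\mu$ with $\lambda_\mu\ge 0$, not all zero, so $\partial_s P=\sum_\mu\lambda_\mu\,\partial_z^\mu P$, where each $\partial_z^\mu P$ --- and hence, after setting $z=0$, each $\partial_z^\mu P|_{z=0}$ --- is Lorentzian (single partials and restriction to coordinate subspaces preserve the Lorentzian property). Thus $\partial_s P|_{z=0}=\sum_\mu\lambda_\mu\,(a_\mu y_0^2+2b_\mu y_0y_1+c_\mu y_1^2)$, where up to one common positive constant $(a_\mu,b_\mu,c_\mu)=(V(K,K,C^\mu),V(K,L,C^\mu),V(L,L,C^\mu))$; Lorentzianity of each summand means $b_\mu^2\ge a_\mu c_\mu$, and the equality hypothesis (by the first paragraph) reads $\bigl(\sum_\mu\lambda_\mu b_\mu\bigr)^2=\bigl(\sum_\mu\lambda_\mu a_\mu\bigr)\bigl(\sum_\mu\lambda_\mu c_\mu\bigr)$. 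If $b_{\mu_0}^2>a_{\mu_0}c_{\mu_0}$ for some $\mu_0$ with $\lambda_{\mu_0}>0$, I would replace the $\mu_0$-summand by the perfect square $(\sqrt{a_{\mu_0}}\,y_0+\sqrt{c_{\mu_0}}\,y_1)^2$, which is again Lorentzian; since Lorentzian polynomials form a convex cone the modified sum stays Lorentzian, its outer coefficients $\sum_\mu\lambda_\mu a_\mu$ and $\sum_\mu\lambda_\mu c_\mu$ are unchanged, while its middle coefficient has strictly decreased (and remains $\ge 0$) --- forcing its square strictly below the product of the outer ones, contradicting the equality. Hence $V(K,L,C^\mu)^2=V(K,K,C^\mu)V(L,L,C^\mu)$ for every $\mu\in\supp s$.

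Finally, picking any $\mu_0\in\supp s$ (possible since $s\ne 0$), the tuple $C^{\mu_0}$ consists of $d-2$ convex bodies of class $C_+^2$ at which the classical Alexandrov--Fenchel inequality is an equality, so $K$ and $L$ are homothetic by the characterization of the equality case of the Alexandrov--Fenchel inequality for smooth, strictly convex reference bodies (see Schneider's monograph, \S7.6, and Shenfeld--van Handel). I expect this last step to be the main obstacle: it is the only place where a genuinely deep theorem enters, namely the equality characterization of the Alexandrov--Fenchel inequality for $C_+^2$ bodies, which I would quote rather than reprove. Everything else --- the inequality itself, and the reduction of the equality case for a general dually Lorentzian $s$ to the case of a single monomial --- is a formal consequence of Theorem~\ref{thm:intropreserve} together with the convex-cone structure of Lorentzian polynomials.
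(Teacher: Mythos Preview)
Your argument for the inequality is correct and essentially matches the paper: apply $\partial_s$ to the volume polynomial (Lorentzian by the classical Alexandrov--Fenchel inequality), restrict to the $(y_0,y_1)$-plane, and read off the $2\times 2$ minor condition.

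The equality argument, however, has a genuine gap. The assertion ``Lorentzian polynomials form a convex cone'' is false: for instance $(2y_0+y_1)^2$ and $(y_0+2y_1)^2$ are Lorentzian, but their sum $5y_0^2+8y_0y_1+5y_1^2$ has two positive eigenvalues. More to the point, the specific deduction you need --- that equality in the weighted sum forces equality in each summand --- fails abstractly. Take $q_1=y_0^2+4y_0y_1+y_1^2$ and $q_2=31y_0^2+12y_0y_1+y_1^2$: both are Lorentzian with strict inequality ($4>1$ and $36>31$), yet $q_1+q_2=32y_0^2+16y_0y_1+2y_1^2$ satisfies $8^2=32\cdot 2$. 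So replacing $q_{\mu_0}$ by a perfect square does not keep the sum Lorentzian, and your contradiction never materialises. Your argument uses no structure specific to mixed volumes beyond positivity and the monomial AF inequalities, so it cannot be repaired without a new idea.

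The paper takes a different route for the equality case: rather than restrict to the two-dimensional $(y_0,y_1)$-plane, it works on the full finite-dimensional subspace $E\subset C^2(S^{d-1})$ spanned by the support functions of $K,L,C_1,\dots,C_n$, and uses the \emph{strictly} $C$-Lorentzian machinery (Section~\ref{sec:CLorentzian}). Choosing an open cone $C\subset E$ whose closure lies in the cone of $C^2_+$ support functions, the classical AF inequality together with its $C^2_+$ equality characterization (Theorem~\ref{thm:AF}) shows via Theorem~\ref{thm:strictlyClorentzian} that the volume polynomial on $E$ is strictly $C$-Lorentzian. Theorem~\ref{thm:invarianceC} then says $\partial_s$ preserves strict $C$-Lorentzianity, so $\partial_s\!\operatorname{vol}$ is a quadratic form on $E$ of Lorentzian signature $(+,-,\dots,-)$ (Proposition~\ref{prop:strictlyClorentzian}). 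Lemma~\ref{lemma:hodge} now gives the equality characterization directly, without any decomposition into monomials. The crucial difference is that by staying on the larger space $E$ and retaining strictness, one obtains non-degeneracy of the bilinear form --- information that is lost once you project to two variables.
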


The conditions that we require to characterize the case of equality in  Theorem~\ref{thm:genAF} are not optimal. We hope to return to this in a future work.

\subsection{Comparison with other work} As already mentioned, Lorentzian polynomials originate in \cite{BrandenHuh}, and have since been studied in a number of other works including \cite{Aluffi,branden2021lorentzian,dey2022polynomials,zbMATH07542930,BrandanHuhSpaces,MezarosSetiabrata,Eur}.  

The work we consider here is influenced to a great extent by work of Ross-Toma \cite{RT19,RT20} who gave an extension of the Hodge-Riemann bilinear relations that apply to Schur classes of ample vector bundles, and also to Schur polynomials in K\"ahler classes \cite{RT22}.    Since Schur polynomials are dually Lorentzian, we recover here many of the results of Ross-Toma for vector bundles that split as a sum of ample line bundles, and in fact extend them far beyond Schur polynomials (see Remarks \ref{rmk:RT1}-\ref{rmk:RT3} for more details).    A version of Theorem \ref{thm:introcombinatorics} for Schur polynomials appears in \cite{RT19} as a corollary of the geometric techniques used in that paper; here we have a proof that is purely algebraic.

As discussed above, a summary of the main idea of this paper is that any theory that admits a mixed AF inequality must also admit generalized versions coming from dually Lorentzian polynomials. There are likely a large number of other examples that we could include, for instance there should be a generalized AF inequality for nef classes on K\"ahler manifolds with certain numerical dimension that admit the Hodge-Riemann property as in \cite{hu2022hard}.     As this work was being completed we became aware of \cite{hu2023intersection} that has a similar theme.

\subsection{Organization} Section \S\ref{sec:preliminaries} has preliminaries including a convenient definition of Lorentzian polynomials, and in \S\ref{sec:preserving} we collect some simple linear operators that preserve the Lorentzian property.   Dually Lorentzian polynomials are defined in \S\ref{sec:duallyLorentzian} and there we prove the first half of Theorem \ref{thm:intropreserve} (namely that $\partial_s$ preserves the Lorentzian property).   Since we consider the notion of being dually Lorentzian useful in its own right, in \S\ref{sec:oper-pres-dual} we study some of its basic properties including giving various operators that preserve being dually Lorentzian.   In \S\ref{sec:strictly} we consider strictly Lorentzian polynomials, and prove the second half of Theorem \ref{thm:intropreserve} (namely that $\partial_s$ preserves the property of being strictly Lorentzian).  In \S\ref{sec:CLorentzian} we give the extension to $C$-Lorentzian polynomials, and finally in \S\ref{sec:applications} we collect the aforementioned applications.

\subsection{Acknowledgements}

This material is based upon work supported by the National Science Foundation under Grant No. DMS-1749447. The second named author was supported by the EPSRC grants EP/V013270/1, EP/V055445/1 and by funding from the Carl Zeiss Foundation.
The third named author was supported by DFG grant WA3510/3-1.
\section{Preliminaries}\label{sec:preliminaries}
In this paper we consider $\NN$ to be the non-negative integers and $[m]=\{0,\ldots, m\}$.
For an element $\alpha = (\alpha_1,\ldots,\alpha_n) \in  \NN^n$ we set $|\alpha|=\sum \alpha_i$. We denote the $i$-th canonical basis vectors in $\NN^^n$ by $e_i$ and the element $(d,\ldots,d) \in \NN^n$ by $\underline{\mathbf{d}}$. For two elements $\kappa, \gamma \in \NN^n$ we write $\gamma \leq \kappa$ if $\gamma_i \leq \kappa_i$ for $i=1,\ldots,n$.
We write $x^\alpha$ for the monomial in $\RR[x_1,\ldots,x_n]$ with exponent vector $\alpha$, i.e. $x^\alpha = \prod_{i=1}^n x_i^{\alpha_i}$ and we will denote by $\RR_\kappa[x_1,\ldots,x_n]$ the set of polynomials, which are supported in monomials $x^\alpha$ with $\alpha \leq \kappa$. In other words, $\RR_\kappa[x_1,\ldots,x_n]$ is the set of polynomials with multidegree $\kappa$, i.e. $x_i$-degree has degree at most $\kappa_i$ for all $i=1,\ldots, n$.  

We recall the definition of Lorentzian polynomials from \cite{BrandenHuh} or more precisely an equivalent characterisation of these polynomials.
\begin{definition}\
\begin{enumerate}
\item  A subset $C \subset \NN^n$ is called \emph{M-convex}, if for each pair $\alpha,\beta \in C$ with $\alpha_i > \beta_i$ there exists an index $j \in  \{1, \ldots,n\}$ such that $\alpha_j < \beta_j$ and $\alpha - e_i + e_j \in C$.
  
\item  A homogeneous polynomial $f \in \RR[x_1, \ldots x_n]$ of degree $d \geq 2$ with non-negative coefficients is called \emph{Lorentzian} if its support is M-convex and $\partial^\alpha f$ defines a quadratic form with at most one positive eigenvalue (counted with multiplicities) for every monomial differential operator
  $\partial^\alpha \in \RR[\partial_{x_1}, \ldots, \partial_{x_n}]$ of degree $|\alpha|=d-2$.

\item  A Lorentzian polynomial of degree $d$ is called \emph{strictly Lorentzian} if it is supported in all monomials of degree $d$ and the quadratic form $\partial^\alpha f$ has signature $(+,-,\ldots,-)$ for every $\alpha \in \NN^n$ with $|\alpha|=d-2$.

\item  We extend this definition to degree $0$ and $1$ by saying that all non-negative constants and linear polynomials with non-negative coefficients are Lorentzian and all positive constants and all linear forms with positive coefficients at every variable are strictly Lorentzian.
 
\item  We denote the set Lorentzian polynomials of degree $d$ by $\mathrm L^d_n$ and the subset of strictly Lorentzian polynomials by $\sL^d_n$.
\end{enumerate}
\end{definition}

The reader will easily check that any monomial is Lorentzian.  The following two criteria are useful to check the Lorentzian property.
\begin{lemma}
  \label{lem:sylvester-criterion}
  Consider a symmetric $m\times m$ matrix $M$ with non-negative entries. Then the following two conditions are equivalent.
  \begin{enumerate}
  \item The matrix $M$ has at most one positive eigenvalue. \label{item:lorentzian-prop}
  \item For every $I\subset \{1,\ldots,m\}$ we have $(-1)^{|I|} \det M_I \leq 0$  where $M_I$ denotes the principal minor of $M$ associated to $I$. \label{item:minor-sign}
  \end{enumerate}
\end{lemma}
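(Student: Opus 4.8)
The plan is to prove the two implications \eqref{item:lorentzian-prop}$\Rightarrow$\eqref{item:minor-sign} and \eqref{item:minor-sign}$\Rightarrow$\eqref{item:lorentzian-prop} separately, treating the first as essentially formal and concentrating the work on the second. Throughout, $I$ ranges over the \emph{nonempty} subsets of $\{1,\dots,m\}$ (for $I=\emptyset$ the claimed inequality would read $1\le 0$, so this is clearly the intended reading), and I will freely use three standard facts from linear algebra: the Cauchy interlacing theorem for eigenvalues of principal submatrices, Sylvester's law of inertia, and the fact that a symmetric matrix of rank $r$ possesses a nonsingular $r\times r$ principal submatrix.

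For \eqref{item:lorentzian-prop}$\Rightarrow$\eqref{item:minor-sign}, fix $I$ and set $k=|I|\ge 1$. By Cauchy interlacing the eigenvalues of $M_I$ interlace those of $M$, so $M_I$ again has at most one positive eigenvalue. If $\det M_I = 0$ then $(-1)^k\det M_I = 0\le 0$ and we are done, so assume $M_I$ is nonsingular. Then $M_I$ cannot be negative definite, because its diagonal entries are entries of $M$ and hence nonnegative while $k\ge 1$; therefore $M_I$ has exactly one positive and $k-1$ negative eigenvalues, so $\det M_I$ has sign $(-1)^{k-1}$ and $(-1)^{|I|}\det M_I<0$. (This is the only step that uses the nonnegativity of the entries.)

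For \eqref{item:minor-sign}$\Rightarrow$\eqref{item:lorentzian-prop} I would argue by contradiction. Suppose $M$ satisfies \eqref{item:minor-sign} but has at least two positive eigenvalues, and among all subsets $I$ for which $M_I$ has at least two positive eigenvalues choose one of minimal cardinality (the family is nonempty since $M$ itself qualifies). Deleting a single index from $I$ can remove at most one positive eigenvalue (Cauchy interlacing), so by minimality $M_I$ has \emph{exactly} two positive eigenvalues. Next I claim $M_I$ is nonsingular: if it had rank $r<|I|$, pick a nonsingular $r\times r$ principal submatrix $M_J$ with $J\subseteq I$; block-diagonalizing via the Schur complement shows $M_I$ is congruent to $M_J\oplus 0$, so by Sylvester's law of inertia $M_J$ has exactly two positive eigenvalues as well, while $|J|=r<|I|$ — contradicting minimality. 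Hence $M_I$ is nonsingular with inertia $(2,|I|-2,0)$, so $\det M_I$ has sign $(-1)^{|I|-2}=(-1)^{|I|}$, i.e. $(-1)^{|I|}\det M_I>0$, contradicting \eqref{item:minor-sign}.

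The only genuine obstacle is the treatment of rank-deficient $M$ in the second implication, where the full determinant carries no sign information at all; the device of passing to a \emph{minimal} bad principal submatrix together with the Schur-complement congruence $M_I\cong M_J\oplus 0$ is exactly what handles this, reducing everything to the nonsingular case in which the sign of the determinant is forced by the inertia. Beyond invoking the three cited linear-algebra facts correctly, I do not expect further difficulties.
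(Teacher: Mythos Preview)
Your proof is correct, but both directions differ from the paper's argument.

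For \eqref{item:lorentzian-prop}$\Rightarrow$\eqref{item:minor-sign}, the paper invokes the Perron--Frobenius theorem to guarantee a non-negative eigenvalue of $M_I$, whereas you simply observe that a nonsingular symmetric matrix with non-negative diagonal entries cannot be negative definite. Your route is slightly more elementary.

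For \eqref{item:minor-sign}$\Rightarrow$\eqref{item:lorentzian-prop}, the paper proceeds by induction on $m$: assuming two independent positive-eigenvalue eigenvectors $v,w$, the sign condition on $\det M$ forces a third eigenvector $u$ with non-negative eigenvalue, and suitable combinations $v-au$, $w-bu$ supported on $[m]\setminus\{i\}$ give a $2$-dimensional positive subspace for $M_{[m]\setminus\{i\}}$, contradicting the inductive hypothesis. Your minimal-counterexample argument with the Schur-complement congruence $M_I\cong M_J\oplus 0$ is a genuinely different and arguably cleaner reduction to the nonsingular case; it also makes transparent that the non-negativity hypothesis is not needed for this implication. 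The paper's approach, on the other hand, avoids appealing to the existence of a nonsingular principal submatrix of full rank and to Sylvester's law of inertia, at the cost of a more hands-on eigenvector manipulation.
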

\begin{proof}
Our proof follows \cite[Lemma 2.3]{vH21}.  Suppose (1) holds.  On the one hand, by Cauchy's interlacing theorem $M_I$ has at most one positive eigenvalue. On the other hand, since $M_I$ has non-negative entries, the Perron-Frobenius theorem implies the existence of a non-negative eigenvalue. This shows that $(-1)^{|I|} \det M_I \leq 0$.

 The other direction is proved by induction on $m$,  the statement being trivial for $m\le 2$.    Assuming then the result for $m-1$ assume for contradiction that $v,w$ are linearly independent eigenvectors of $M$ with positive eigenvalue, which we may assume to orthogonal with respect to $M$.     As $(-1)^m \det(M)\le 0$ there must be a third eigenvector $u$ orthogonal to $v$ and $w$ with non-negative eigenvalue.

Choose any $i$ with $u_i\neq 0$ and let $I = [m]\setminus \{i\}$, and then choose $a,b\in \mathbb R$ so that $x = v-au$ and $y = w-bu$ satisfy $x_i=y_i=0$.  Then by construction $x,y$ are linearly independent and $z^TMz>0$ for all non-zero $z$ in the span of $x,y$.  As $x$ and $y$ are supported on $I$ this implies that $M_I$ has at least two positive eigenvalues which contradicts the inductive hypothesis.
\end{proof}

 Variants of the following lemma appear in \cite[Lemma 2.5]{BrandenHuh} and \cite[Lemma 2.1]{vH21}. For $x\in \RR^n$ we write $x\geq 0$ if $x_i\geq 0$ for all $i$. 
	
 \begin{lemma}\label{lemma:hodge}
	Let $q\colon \RR^n\to \RR$ be a quadratic form and suppose that $q(y)>0$ if  $y> 0$. Then the following statements are equivalent:
	\begin{enumerate}
		\item If $y>0$, then $q(x,y)^2 \geq q(x)q(y)$ for all $x\geq 0$ and equality holds if and only $x$ is proportional to $y$.
		\item If $y>0$, then $q(x,y)^2 \geq q(x)q(y)$ for all $x$ and equality holds if and only if $x$ is proportional to $y$. 
		\item If $q(y)>0$, then $q(x,y)^2 \geq q(x)q(y)$ for all $x$ and equality holds if and only $x$ is proportional to $y$. 
		\item $q$ has signature $(+,-,\ldots, -)$.
	\end{enumerate}
\end{lemma}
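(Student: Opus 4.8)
The plan is to prove the four conditions equivalent by running the cycle $(4)\Rightarrow(3)\Rightarrow(2)\Rightarrow(1)\Rightarrow(4)$. Two of these are immediate. For $(3)\Rightarrow(2)$ one invokes the standing hypothesis: if $y>0$ then $q(y)>0$, so the hypothesis of (3) is in force and its conclusion is exactly that of (2). For $(2)\Rightarrow(1)$ one merely observes that (1) is the conclusion of (2) restricted to the smaller class of test vectors $\{x\geq 0\}$. So the substance lies in $(4)\Rightarrow(3)$ and $(1)\Rightarrow(4)$.

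For $(4)\Rightarrow(3)$ I would run the standard reverse Cauchy--Schwarz argument for a Lorentzian form. Assume $q$ has signature $(+,-,\ldots,-)$ and fix $y$ with $q(y)>0$. Since $q(y)\neq 0$ the functional $v\mapsto q(v,y)$ is non-zero, so $\RR^n=\RR y\oplus y^\perp$ with $y^\perp=\{v:q(v,y)=0\}$ a hyperplane; here $q|_{y^\perp}$ is non-degenerate (its radical is contained in $\ker q=\{0\}$) and admits no positive vector (a positive vector of $y^\perp$ would, together with $y$, span a two-dimensional positive definite subspace), hence $q$ is negative definite on $y^\perp$. Decomposing $x=\lambda y+x'$ with $x'\in y^\perp$ yields $q(x,y)=\lambda q(y)$ and $q(x)=\lambda^2 q(y)+q(x')$, so $q(x,y)^2-q(x)q(y)=-q(x')q(y)\geq 0$, with equality exactly when $q(x')=0$, i.e. $x'=0$, i.e. $x$ is proportional to $y$.

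The implication I expect to be the main obstacle is $(1)\Rightarrow(4)$, since (1) only controls $q$ on pairs $(x,y)$ with $x\geq 0$, $y>0$, whereas the conclusion concerns all of $\RR^n$. The standing hypothesis already gives that $q$ has at least one positive eigenvalue (it is not negative semidefinite). To rule out a second positive eigenvalue, suppose $q$ is positive definite on a plane $P$; fixing any $y>0$, a dimension count in the hyperplane $y^\perp$ produces a non-zero $x'\in P\cap y^\perp$, and then $q(x')>0$, $q(x',y)=0$, so the perturbations $x_t:=y+tx'$ lie in the positive orthant for $|t|$ small (this is where strict positivity of $y$ matters) and satisfy $q(x_t,y)^2-q(x_t)q(y)=-t^2q(x')q(y)<0$ for $t\neq 0$, contradicting (1). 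To rule out degeneracy, pick $0\neq u\in\ker q$; since $q\not\equiv 0$ we have $n\geq 2$, so there is $y>0$ not proportional to $u$, and the perturbations $x_t:=y+tu$ lie in the positive orthant for $|t|$ small and give equality $q(x_t,y)^2=q(x_t)q(y)=q(y)^2$ with $x_t$ not proportional to $y$, again contradicting (1). Hence $q$ has exactly one positive eigenvalue and no zero eigenvalue, i.e. signature $(+,-,\ldots,-)$.

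Beyond these two arguments the only things to watch are that the perturbed vectors genuinely remain admissible — which is precisely why (1) is stated with $x\geq 0$ and $y>0$ rather than $x,y\geq 0$ — and the harmless low-dimensional edge cases.
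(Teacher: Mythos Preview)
Your proof is correct. The substantive difference from the paper lies in how you handle the passage from (1) to (4). The paper runs the cycle $(1)\Rightarrow(2)\Rightarrow(4)\Rightarrow(3)\Rightarrow(1)$, and the key observation is a translation trick for $(1)\Rightarrow(2)$: given arbitrary $x$ and $y>0$, choose $t$ with $x+ty>0$, apply (1) to the pair $(x+ty,y)$, and expand---the $t$-terms cancel and one obtains the inequality and equality characterization for $x$ itself. After that, $(2)\Rightarrow(4)$ is immediate by restricting $q$ to the $q$-orthogonal complement of a fixed $y>0$.

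You instead prove $(1)\Rightarrow(4)$ directly by two contradiction arguments: perturbing $y$ along a positive vector in $y^\perp\cap P$ to violate the inequality, and perturbing $y$ along a kernel vector to violate the equality clause. This is perfectly valid and makes transparent exactly \emph{why} one needs the strict positivity of $y$ in (1). The paper's translation trick, however, is shorter and bypasses the separate treatment of the second positive eigenvalue and the degeneracy---once (2) is available, both obstructions are dispatched in a single line. Your $(4)\Rightarrow(3)$ is essentially the same as the paper's.
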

\begin{proof}
	(1)$\Rightarrow$(2): Fix $x\in \RR^n$ and choose a number $t$ so that $x+ty>0$. By (1) we have 
	$$q(x+ty,y)^2 \geq q(x+ty,x+ty)q(y).$$ 
	Expanding both sides yields (2), since the terms involving $t$ cancel.
	
	(2)$\Rightarrow$(4): Fix $y>0$. Then $q(y)>0$ by assumption. If $x$ is orthogonal to $y$ with respect to $q$, then $q(x)\leq 0$ with equality if and only if $x=0$. Hence $q$ has signature  $(+,-,\ldots, -)$.
	
	(4)$\Rightarrow$(3): Suppose $q(y)>0$ and let $x$ be arbitrary. Put $z= x-ty$ with $t= q(x,y)/q(y)$. Then 
	$q(z,x)=0$ and hence $q(z)\leq 0$ with equality if and only if $z=0$. Expressing this in $x$ and $y$ yields (3). 
	
	(3)$\Rightarrow$(1) is trivial. 
\end{proof}

\section{Truncations}\label{sec:preserving}
Operators preserving the Lorentzian property play a central role in the theory developed by  Br\"and\'en-Huh, and we will need the following aspect:

\begin{theorem}\label{thm:changeVar}
	Let $f\in \mathrm L_n^d$ and $g\in \mathrm L_n^e$.
	\begin{enumerate}
		\item For every $m\times n$ matrix $A$ with non-negative entries $f(Ax)\in \mathrm L^d_m$.  
		\item $fg\in \mathrm L_n^{d+e}$. 
	\end{enumerate}
\end{theorem}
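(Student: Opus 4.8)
The plan is to derive (2) from (1), and to prove (1) by checking directly the two conditions in the definition of a Lorentzian polynomial; both assertions originate in Br\"and\'en--Huh \cite{BrandenHuh}. For the reduction, given $f\in \mathrm L_n^d$ and $g\in \mathrm L_n^e$ (both nonzero, else there is nothing to prove), form $F(x,y)=f(x)g(y)$ in the $2n$ variables $x_1,\dots,x_n,y_1,\dots,y_n$. Its support is $\supp(f)\times\supp(g)$, which is M-convex: for $\alpha,\beta$ differing in an $x$-coordinate the required exchange index is supplied by M-convexity of $\supp(f)$, and similarly for a $y$-coordinate. For the signature condition, if $\gamma=(\gamma',\gamma'')$ has $|\gamma|=d+e-2$ then $\partial^\gamma F=(\partial^{\gamma'}f)(x)\,(\partial^{\gamma''}g)(y)$, which is zero unless $|\gamma'|\in\{d-2,d-1,d\}$; in the two extreme cases it is a non-negative multiple of a degree-$2$ Lorentzian form in the $x$- or the $y$-variables, and in the middle case it is $\ell(x)\,m(y)$ for non-negative linear forms $\ell,m$, hence a quadratic form of signature $(+,-,0,\dots,0)$; in all cases there is at most one positive eigenvalue. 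Thus $F\in \mathrm L_{2n}^{d+e}$, and since $f(x)g(x)=F(x_1,\dots,x_n,x_1,\dots,x_n)$ is obtained from $F$ by substituting $x_i$ for $y_i$ --- a linear substitution with $0/1$ coefficients --- part (1) gives $fg\in \mathrm L_n^{d+e}$.

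For (1), write $g(x)=f(Ax)$ with $A$ having non-negative entries; we may assume $f\neq 0$, $A\neq 0$, and $d\ge 2$, the remaining cases being immediate. Non-negativity of the coefficients of $g$ is clear. For M-convexity of $\supp(g)$: as there is no cancellation, $\supp(g)=\bigcup_{\alpha\in\supp(f)}\supp\big(\textstyle\prod_i(Ax)_i^{\alpha_i}\big)$ is the lattice-point set of the generalized permutohedron obtained from $\mathrm{conv}\,\supp(f)$ by the Minkowski-sum and projection operations dual to $A$, and that such operations preserve M-convexity of supports (equivalently, the class of discrete polymatroids) is the combinatorial input, to be quoted from \cite{BrandenHuh}. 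For the signature condition, fix a multi-index $\beta$ with $|\beta|=d-2$ and let $e_{k_1},\dots,e_{k_{d-2}}$ be the coordinate vectors listed with multiplicity according to $\beta$. Iterating the chain rule $\partial_{x_k}(h\circ A)=(D_{u}h)\circ A$, where $u$ is the $k$-th column of $A$ and $D_u=\sum_i u_i\partial_i$ is differentiation in the direction $u$ of the $n$ arguments of $f$, gives $\partial^\beta g=(D_{u_1}\cdots D_{u_{d-2}}f)\circ A$ as quadratic forms, with each $u_l\ge 0$. By the Br\"and\'en--Huh theorem that $D_u$ preserves the Lorentzian property for every non-negative direction $u$, the polynomial $Q:=D_{u_1}\cdots D_{u_{d-2}}f$ is Lorentzian of degree $2$ (or zero, in which case there is nothing to check), so its matrix has at most one positive eigenvalue; and precomposition with a linear map cannot raise the positive index of inertia, since $x^\top(A^\top Q A)x>0$ forces $Ax\neq 0$, so $A$ is injective on any subspace on which $A^\top QA$ is positive definite and $Q$ is positive definite on its image. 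Hence $\partial^\beta g$, whose matrix is $A^\top Q A$, has at most one positive eigenvalue, and (1) follows.

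The crux --- and the reason one imports \cite{BrandenHuh} rather than argue by a bare induction on the degree --- is that ``having at most one positive eigenvalue'' is not preserved under non-negative linear combinations of quadratic forms (one can already find two such forms whose sum has two positive eigenvalues). Thus the naive induction, which would note that $\partial_j g=\sum_i A_{ij}(\partial_i f)\circ A$ is a non-negative combination of Lorentzian polynomials, does not show $\partial_j g$ is Lorentzian. What makes the proof go through is that these combinations are not arbitrary but are (directional) derivatives of a genuine Lorentzian polynomial; isolating this --- as invariance under $D_u$ for $u\ge 0$, equivalently the fact that the coordinate-derivative condition in the definition automatically upgrades to all non-negative directional derivatives --- is exactly the nontrivial ingredient, the rest being the polarization identity above and elementary linear algebra.
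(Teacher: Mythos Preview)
Your reduction of (2) to (1) via $F(x,y)=f(x)g(y)$ and the diagonal substitution is exactly what the paper does; the details you supply (M-convexity of $\supp(f)\times\supp(g)$ and the three cases for $\partial^\gamma F$) are correct and simply flesh out what the paper leaves implicit.

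For (1) the paper does not argue at all but cites \cite[Theorem~2.10]{BrandenHuh} directly. Your sketch is sound as written, but note a hidden circularity if you intend it as an independent proof: the fact you invoke---that $D_u$ preserves the Lorentzian property for every $u\ge 0$---is \cite[Corollary~2.11]{BrandenHuh}, which in Br\"and\'en--Huh's development is itself deduced \emph{from} their Theorem~2.10. So your argument reduces (1) to a statement whose only available proof already uses (1). As an alternate packaging of the same citation this is harmless, and indeed your final paragraph correctly identifies the genuine content (upgrading the coordinate-derivative condition in the definition to arbitrary non-negative directional derivatives) as the nontrivial ingredient; just be aware that this ingredient is not supplied here but imported.
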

\begin{proof}
 Item (1) is  \cite[Theorem~2.10]{BrandenHuh}.  Item (2) is an easy consequence of (1) upon considering the polynomial $f(x)g(y)$ restricted to $x=y$.
\end{proof}

The purpose of this section is to show that truncations  preserve the Lorentzian property.

\begin{definition}
  Let $f = \sum_{\alpha \in \NN^n}\lambda_\alpha x^\alpha$ be homogeneous polynomial of degree $d$. Then its $\kappa$-\emph{truncations} are defined as
  \[f_ {\leqslant \kappa} = \sum_{\alpha \leq \kappa} \lambda_\alpha x^\alpha \qquad \text{ and }\qquad  f_ {\geqslant \kappa} = \sum_{\alpha \geq \kappa} \lambda_\alpha x^\alpha.\]
\end{definition}

\begin{proposition}
  \label{prop:truncation}
  If $f \in \RR[x_1,\ldots,x_n]$ is Lorentzian, so are its truncations.
\end{proposition}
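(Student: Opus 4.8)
The plan is to reduce both truncations to a single, more flexible operation and then invoke the tools already at hand, chiefly Theorem \ref{thm:changeVar}(1) and Lemma \ref{lem:sylvester-criterion}. The key observation is that each $\kappa$-truncation is obtained by iterating truncations of a single variable in a single degree: that is, it suffices to treat the operator that kills all monomials $x^\alpha$ with $\alpha_i > k$ for one fixed $i$ and one fixed $k$ (and, for the $\geqslant$ version, the one that kills $\alpha_i < k$), since $f_{\leqslant \kappa}$ is obtained by composing these over $i=1,\ldots,n$, and these operators commute. So the whole proposition comes down to: if $f$ is Lorentzian, so is the polynomial obtained by deleting from $f$ every monomial whose $x_i$-exponent exceeds (resp. is below) a given threshold.

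First I would verify that the support of the truncation is $M$-convex. If $C$ is $M$-convex and $C_{\leq k} = \{\alpha \in C : \alpha_i \le k\}$, then for $\alpha,\beta \in C_{\le k}$ with $\alpha_j > \beta_j$ the exchange step $\alpha - e_j + e_\ell \in C$ guaranteed by $M$-convexity of $C$ can be chosen so that $\alpha - e_j + e_\ell$ still has $i$-th coordinate at most $k$: the only way the $i$-th coordinate could increase is if $\ell = i$, but then since $\alpha_i \le k$ and we are subtracting $e_j$ with $j \ne i$ first (the case $j=i$ makes the $i$-coordinate drop), one checks the exchange can be routed so as to stay in $C_{\le k}$; the same reasoning with inequalities reversed handles $C_{\ge k}$. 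This is the kind of combinatorial bookkeeping that I expect to be slightly fiddly but not deep, and it is essentially the $M$-convexity half of arguments already used in \cite{BrandenHuh}.

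Second, and this is the analytic heart, I would check the Hessian condition. Fix a monomial differential operator $\partial^\gamma$ of degree $d-2$; we must show $\partial^\gamma (f_{\leqslant \kappa})$ has at most one positive eigenvalue. The cleanest route is the substitution trick behind Theorem \ref{thm:changeVar}: truncation in the variable $x_i$ at level $k$ can be realized as a specialization of a polynomial in one extra variable. Concretely, write $f$ as a polynomial in $x_i$ with coefficients in the remaining variables; introduce a new variable $x_0$, form a suitable Lorentzian polynomial in $x_0, x_1,\ldots,x_n$ (for instance using that products of Lorentzian polynomials are Lorentzian, Theorem \ref{thm:changeVar}(2), together with the linear substitutions allowed by part (1)), and recover the truncation as the image under a non-negative linear substitution and a coefficient extraction that is itself a non-negative linear operator preserving Lorentzianity. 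Alternatively — and perhaps more transparently — one can argue directly at the level of quadratic forms via Lemma \ref{lem:sylvester-criterion}: express the Hessian matrix of $\partial^\gamma(f_{\leqslant\kappa})$ in terms of that of $\partial^\gamma f$ (they agree on many entries, with some entries set to zero when a derivative would have raised the $x_i$-degree past $k$), and verify the principal-minor sign conditions $(-1)^{|I|}\det M_I \le 0$ are inherited.

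I expect the main obstacle to be precisely this last point: it is \emph{not} true in general that zeroing out entries of a symmetric matrix with at most one positive eigenvalue preserves that property, so a naive "delete entries of the Hessian" argument fails, and one genuinely needs the structure of where the deletions occur (they correspond to pairs of monomials straddling the threshold, and the deleted block has a product/rank-one type structure coming from the monomial structure in $x_i$). The honest fix is the substitution realization in the previous paragraph, so that Lorentzianity of the truncation follows formally from Theorem \ref{thm:changeVar} applied to an auxiliary polynomial in one more variable, rather than from any direct matrix manipulation; getting that auxiliary polynomial and the extraction operator right is the one step I would be most careful about.
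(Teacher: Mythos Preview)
Your reduction to single-variable, single-threshold truncations and your handling of $M$-convexity are exactly what the paper does. Where you diverge is on the Hessian step: you sketch two routes, favour the ``substitution/extraction in one more variable'' realization, and flag the direct principal-minor argument via Lemma~\ref{lem:sylvester-criterion} as the one likely to fail. In fact the paper takes precisely the direct route you are wary of, and it goes through cleanly.

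The point you are missing is that the case analysis on the value of the exponent $\alpha_i$ (where $\partial^\alpha$ has degree $d-2$) relative to the threshold $m$ pins down \emph{exactly which} entries of the Hessian matrix get zeroed, and in each case the zeroing pattern is extremely tame. For the upper truncation $\{\alpha_i\le m\}$: if $\alpha_i>m$ or $\alpha_i\le m-2$ the truncated Hessian is either $0$ or unchanged; if $\alpha_i=m$ an entire row and column (the $i$th) is deleted, and Cauchy interlacing finishes it; if $\alpha_i=m-1$ only the single diagonal entry $c_{ii}$ is set to zero, and then a one-line Laplace expansion shows $(-1)^{|I|}\det \bar M_I = (-1)^{|I|}\det M_I + c_{ii}\,(-1)^{|I\setminus\{i\}|}\det M_{I\setminus\{i\}}\le 0$. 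The lower truncation is analogous, with the nontrivial case leaving a matrix supported only on the $i$th row and column, whose $2\times 2$ minors are visibly $\le 0$. So your instinct that ``zeroing entries of a one-positive-eigenvalue matrix need not preserve the property'' is correct in general, but the specific zeroing patterns arising here are harmless and Lemma~\ref{lem:sylvester-criterion} applies directly.

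By contrast, your preferred substitution route is left vague; it is not clear how to realize ``kill all monomials with $x_i$-exponent $>k$'' as a nonnegative linear change of variables followed by a coefficient extraction that is already known to preserve Lorentzianity, and you acknowledge this is the step you have not nailed down. The paper simply does not need it.
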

\begin{proof}
  The claim follows by induction from the following  lemmas.
\end{proof}

\begin{lemma}
Fix an index $1 \leq i \leq n$
  Given a polynomial $f=\sum_\alpha \lambda_\alpha x^\alpha$ with M-convex support. Then $\bar f = \sum_{\alpha, \alpha_i \leq m}\lambda_\alpha x^\alpha$ has M-convex support, as well. 
\end{lemma}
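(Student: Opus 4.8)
The plan is to reduce the statement to the purely combinatorial claim that truncating an M-convex set by an upper bound on one coordinate again yields an M-convex set. Writing $C=\supp f$, one has $\supp\bar f=\{\alpha\in C:\alpha_i\le m\}=:\bar C$ (and $\bar f$ is again homogeneous with non-negative coefficients), so it suffices to show that $\bar C$ is M-convex whenever $C$ is.

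To verify the exchange property for $\bar C$, I would fix $\alpha,\beta\in\bar C$ together with an index $k$ such that $\alpha_k>\beta_k$, and apply M-convexity of $C$ to the pair $(\alpha,\beta)$ and the coordinate $k$: this produces an index $j$ with $\alpha_j<\beta_j$ and $\gamma:=\alpha-e_k+e_j\in C$. Since $\alpha_k>\beta_k$ while $\alpha_j<\beta_j$, necessarily $j\ne k$. The only thing left to check is that $\gamma$ still satisfies the truncation bound $\gamma_i\le m$; once we know $\gamma\in\bar C$ we have produced the required exchange inside $\bar C$.

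For that last point I would split into cases according to whether the exchange coordinate $j$ equals the truncated coordinate $i$. If $i\ne j$, then $\gamma_i\le\alpha_i\le m$ directly, since the $i$-th coordinate is either unchanged or, in the case $i=k$, decreased by one, and $\alpha\in\bar C$. If $i=j$, then $i\ne k$ and $\gamma_i=\alpha_i+1$; but now $\alpha_j<\beta_j$ together with $\beta\in\bar C$ forces $\alpha_i<\beta_i\le m$, hence $\alpha_i\le m-1$ and again $\gamma_i\le m$. In all cases $\gamma\in\bar C$, which establishes M-convexity of $\bar C$.

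The case $i=j$ is the only place where anything actually has to be said: the key observation is that membership of $\beta$ in the truncation (so $\beta_i\le m$) combined with $\alpha_i<\beta_i$ leaves room to raise the $i$-th coordinate of $\alpha$ without leaving $\bar C$. I do not expect a genuine obstacle here beyond carefully bookkeeping the coincidences among $i$, $j$, $k$.
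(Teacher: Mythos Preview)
Your argument is correct and is essentially identical to the paper's proof: both apply the exchange axiom for $\supp f$ to the given pair and then check that the exchanged point still satisfies the bound $\alpha_i\le m$, splitting into the two cases according to whether the incremented coordinate equals $i$ and using $\beta_i\le m$ in the critical case. (The parenthetical about homogeneity and non-negative coefficients is extraneous to the statement as formulated, but harmless.)
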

\begin{proof}
  Assume we have $\alpha, \beta \in \supp(\bar f)$ with $\alpha_j > \beta_j$. Then by the M-convexity for $f$ there exists an index $k$ with $\alpha_k < \beta_k$ and $\alpha -e_j +e_k \in \supp(f)$. If $k \neq i$, then also $\alpha-e_j + e_k \in \supp(\bar f)$ holds. In the case $k=i$,  we have $\alpha_k < \beta_k \leq m$, since $\beta \in \supp(\bar f)$.  Hence, we obtain
  \[\alpha_k -e_j +e_k \in \supp(f) \cap \{\alpha \mid \alpha_i \leq m\} = \supp(\bar f).\]
\end{proof}

\begin{lemma}
Fix an index $1 \leq i \leq n$
  Given a polynomial $f=\sum_\alpha \lambda_\alpha x^\alpha$ with M-convex support. Then $\bar f = \sum_{\alpha, \alpha_i \geq m}\lambda_\alpha x^\alpha$ has M-convex support, as well. 
\end{lemma}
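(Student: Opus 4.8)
The plan is to mirror the proof of the preceding lemma, with the roles of ``too large'' and ``too small'' interchanged. I would begin with two exponent vectors $\alpha,\beta \in \supp(\bar f)$ --- so that $\alpha,\beta \in \supp(f)$ with $\alpha_i \ge m$ and $\beta_i \ge m$ --- satisfying $\alpha_j > \beta_j$ for some index $j$. M-convexity of $\supp(f)$ then furnishes an index $k$ with $\alpha_k < \beta_k$ and $\gamma := \alpha - e_j + e_k \in \supp(f)$, and the only remaining task is to verify that $\gamma_i \ge m$, which would give $\gamma \in \supp(f) \cap \{\delta \in \NN^n : \delta_i \ge m\} = \supp(\bar f)$ and hence the required M-convexity property.

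That verification splits according to whether the decremented index $j$ equals the truncation index $i$. If $j \ne i$, then passing from $\alpha$ to $\gamma$ cannot decrease the $i$-th coordinate: subtracting $e_j$ leaves it unchanged and adding $e_k$ can only increase it, so $\gamma_i \ge \alpha_i \ge m$. If $j = i$, then $\alpha_j > \beta_j$ together with $\beta \in \supp(\bar f)$ forces $\alpha_i > \beta_i \ge m$, hence $\alpha_i \ge m+1$; since subtracting $e_i$ costs at most one unit, $\gamma_i \ge \alpha_i - 1 \ge m$ still holds. In either case $\gamma \in \supp(\bar f)$, which is exactly what had to be checked.

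I do not expect a genuine obstacle: the single delicate point is the case $j = i$, and there the strict inequality $\alpha_i > \beta_i$ implied by $\alpha_j > \beta_j$ is precisely what absorbs the unit lost from the $i$-th coordinate --- the exact counterpart of the role played by the constraint $\alpha_i < \beta_i \le m$ in the proof of the previous lemma. Together with that lemma, this one supplies the last ingredient the proof of Proposition~\ref{prop:truncation} invokes, the general truncations $f_{\leqslant\kappa}$ and $f_{\geqslant\kappa}$ being obtained by applying the two M-convexity lemmas one truncated coordinate at a time.
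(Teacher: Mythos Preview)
Your argument is correct and matches the paper's proof essentially line for line: both use M-convexity of $\supp(f)$ to obtain $k$ with $\alpha_k<\beta_k$ and $\alpha-e_j+e_k\in\supp(f)$, then split on $j\neq i$ versus $j=i$, using in the latter case that $\alpha_i>\beta_i\ge m$ absorbs the unit lost in the $i$-th coordinate. One small inaccuracy in your closing remark: this lemma is not the \emph{last} ingredient for Proposition~\ref{prop:truncation}, since two further lemmas on the eigenvalue condition are still needed.
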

\begin{proof}
  Assume we have $\alpha, \beta \in \supp(\bar f)$ with $\alpha_j > \beta_j$. Then by the M-convexity for $f$ there exists an index $k$ with $\alpha_k < \beta_k$ and $\alpha -e_j +e_k \in \supp(f)$. If $j \neq i$, then also $\alpha-e_j + e_k \in \supp(\bar f)$ holds. In the case $j=i$,  we have $\alpha_j > \beta_j \geq m$, since $\beta \in \supp(\bar f)$.  Hence, we obtain
  \[\alpha_k -e_j +e_k \in \supp(f) \cap \{\alpha \mid \alpha_i \geq m\} = \supp(\bar f).\]
\end{proof}

\begin{lemma}
  Fix an index $1 \leq i \leq n$
  Given a Lorentzian polynomial $f=\sum_\alpha \lambda_\alpha x^\alpha$ of degree $d$ we set  $\bar f = \sum_{\alpha, \alpha_i \leq m}\lambda_\alpha x^\alpha$. Then for every monomial $\partial^\alpha \in \RR[\partial_{x_1},\ldots,\partial_{x_n}]$ of degree $d-2$ the quadratic form $\partial^\alpha \bar f$ has at most one positive eigenvalue.
\end{lemma}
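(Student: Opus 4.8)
The plan is to reduce everything to the Lorentzian property of $f$ itself, that is, to the fact that each quadratic form $\partial^\beta f$ with $|\beta|=d-2$ has at most one positive eigenvalue. Fix such a monomial $\partial^\beta$ and put $\ell=m-\beta_i$. A short bookkeeping computation identifies $\partial^\beta\bar f$ as the $x_i$-degree truncation of $q:=\partial^\beta f$ to degree $\le \ell$ in the variable $x_i$: differentiation sends $x^\alpha$ to a nonnegative multiple of $x^{\alpha-\beta}$, and the two constraints $\alpha_i\le m$ (from $\bar f$) and $\alpha_i\ge\beta_i$ (for the monomial to survive $\partial^\beta$) together say exactly that $0\le(\alpha-\beta)_i\le\ell$. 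So only four cases occur. If $\ell\ge 2$ then $\partial^\beta\bar f=q$ and there is nothing to prove; if $\ell<0$ then $\partial^\beta\bar f=0$; if $\ell=1$ then $\partial^\beta\bar f$ is $q$ with its $x_i^2$-coefficient deleted; and if $\ell=0$ then $\partial^\beta\bar f$ is $q$ with every monomial divisible by $x_i$ deleted.

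The case $\ell=0$ is immediate: the matrix of $\partial^\beta\bar f$ differs from a principal submatrix of the matrix of $q$ only by an extra zero row and column, so by Cauchy's interlacing theorem it inherits the property of having at most one positive eigenvalue. The case $\ell=1$ is the heart of the matter, and I would isolate it as the following claim: if $M$ is a symmetric $n\times n$ matrix with nonnegative entries and at most one positive eigenvalue, and $M'$ is obtained from $M$ by decreasing the $(i,i)$-entry by some $\delta\ge 0$ while keeping it nonnegative, then $M'$ also has at most one positive eigenvalue.

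To prove the claim I would apply the minor criterion of Lemma~\ref{lem:sylvester-criterion}, which is available since $M'$ still has nonnegative entries. For $I\subseteq\{1,\ldots,n\}$ with $i\notin I$ the principal minor is unchanged, $\det M'_I=\det M_I$. For $i\in I$, expanding the determinant linearly in the $(i,i)$-entry gives $\det M'_I=\det M_I-\delta\det M_{I\setminus\{i\}}$; multiplying by $(-1)^{|I|}$ and using Lemma~\ref{lem:sylvester-criterion} for $M$ (which gives both $(-1)^{|I|}\det M_I\le 0$ and $(-1)^{|I\setminus\{i\}|}\det M_{I\setminus\{i\}}\le 0$, hence $(-1)^{|I|}\det M_{I\setminus\{i\}}\ge 0$) yields $(-1)^{|I|}\det M'_I\le 0$. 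The converse direction of Lemma~\ref{lem:sylvester-criterion} then gives the claim, and with it the lemma.

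The only delicate point is the bookkeeping of the first paragraph: translating the truncation condition $\alpha_i\le m$ on $f$ into a truncation condition on $q=\partial^\beta f$, and noticing that for $\ell\le 0$ the form $\partial^\beta\bar f$ degenerates (it vanishes, or it no longer involves $x_i$), so that those cases are trivial or reduce to interlacing. Once this is in place, the $\ell=1$ case is a one-line consequence of the minor criterion.
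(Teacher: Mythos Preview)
Your proof is correct and follows essentially the same approach as the paper's: the same four-way case split (your parameter $\ell=m-\beta_i$ corresponds to the paper's cases $\alpha_i>m$, $\alpha_i\le m-2$, $\alpha_i=m$, $\alpha_i=m-1$), the same use of Cauchy interlacing for $\ell=0$, and the same minor-expansion argument via Lemma~\ref{lem:sylvester-criterion} for $\ell=1$. Your framing of the $\ell=1$ step as a slightly more general claim (decreasing a diagonal entry by any $\delta\ge 0$ rather than setting it to zero) is a harmless generalization, and your bookkeeping identifying $\partial^\beta\bar f$ as the $x_i$-degree-$\le\ell$ truncation of $\partial^\beta f$ is a clean way to organize the case analysis.
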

\begin{proof}
  Given a monomial differential operator $\partial^\alpha$ of degree $d-2$, we consider 
  the coefficients $c_{\beta}$ of $\partial^\alpha f$. We then set \[c_{ij}=c_{ji}=
    \begin{cases}
      \frac{1}{2}c_{e_i+e_j} & i \neq j\\
      c_{2e_i}                & i = j
    \end{cases}
    .\]
  Because $f$ was assumed to be Lorentzian, by Lemma~\ref{lem:sylvester-criterion} we have $(-1)^{|I|}|M_I| \leq 0$ for the principal minors of the matrix $M=(c_{ij})$. Here, $I \subset \{1,\ldots,n\}$ denotes the subset of those indices, which are included in the principal minor.  We will prove that the same condition on the minors is fulfilled for the matrix $\bar M$ corresponding to $\partial^\alpha  \bar f$. Note, that $\bar M$ coincides with $M$ except that some of the entries might have been changed to $0$.

  Let us first note the following facts:
  \begin{enumerate}
  \item If $\alpha_i > m$, then $\partial^\alpha \bar f=0$. 
  \item If $\alpha_i \leq m-2$, then  $\partial^\alpha \bar f= \partial^\alpha f$.
  \end{enumerate}
  In particular, in each of these cases $\partial^\alpha \bar f$ has at most one positive eigenvalue.
  Thus it remains to consider the cases $\alpha_i=m$ and $\alpha_i=m-1$.

We start with $\alpha_i=m$. Then $\partial^\alpha f-\partial^\alpha \bar f$ is supported
  in exactly those monomials of $\partial^\alpha f$ which contain $x_i$. Hence, by the Eigenvalue Interlacing Theorem, 
  $\partial^\alpha \bar f$ has again at most one positive eigenvalue.

  Next, we consider the case $\alpha_i=m-1$. Here, $\partial^\alpha f-\partial^\alpha \bar f$ is supported in the monomial $x^{2}_i$ with coefficient $c_{ii}$. Hence, $\bar M$ arises from $M$ by setting the $i$-th diagonal entry to $0$. 
  
  If $i \notin I$, then $M_I=\bar M_I$ and $(-1)^{|I|}|\bar M_I| = (-1)^{|I|}|M _I| \le 0$.   If $i \in I$, then we have $|M_I| = |\bar M_I| + c_{ii}|M_{I\setminus \{i\}}|$, by Laplace expansion  along the $i$-th row for both matrices.  So
  \begin{align*}
    (-1)^{|I|}|\bar M_I|&= (-1)^{|I|}\left(|M_I| \;-\; c_{ii}|M_{I\setminus \{i\}}|\right)\\
                    &=  (-1)^{|I|}|M_I| \;+\; (-1)^{|I\setminus\{i\}|}\cdot c_{ii} \cdot |M_{I\setminus \{i\}}|\\
    &\leq 0.
  \end{align*}
where the last inequality uses Lemma~\ref{lem:sylvester-criterion} applied to $M$ and $c_{ii} \geq 0$ since $f$ is Lorentzian.  Thus the result we want follows by Lemma~\ref{lem:sylvester-criterion}.
\end{proof}

\begin{lemma}
  Fix an index $1 \leq i \leq n$
  Given a Lorentzian polynomial $f=\sum_\alpha \lambda_\alpha x^\alpha$ of degree $d$ we set  $\bar f = \sum_{\alpha, \alpha_i \geq m}\lambda_\alpha x^\alpha$. Then for every monomial $\partial^\alpha \in \RR[\partial_{x_1},\ldots,\partial_{x_n}]$ of degree $d-2$ the quadratic form $\partial^\alpha \bar f$ has at most one positive eigenvalue.
\end{lemma}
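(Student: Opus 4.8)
The plan is to follow the same scheme as the proof of the preceding lemma, comparing the symmetric matrix $\bar M=(\bar c_{jk})$ of the quadratic form $\partial^\gamma\bar f$ with the matrix $M=(c_{jk})$ of $\partial^\gamma f$; here $\partial^\gamma$ denotes the given monomial differential operator of degree $d-2$ (written $\gamma$ rather than $\alpha$ to avoid clashing with the summation index in $f=\sum_\alpha\lambda_\alpha x^\alpha$). As in that proof, $\bar M$ is obtained from $M$ by setting some entries to $0$: a degree-two monomial $x^{\gamma+\delta}$ with $|\delta|=2$ occurring in $\partial^\gamma f$ survives in $\partial^\gamma\bar f$ exactly when $(\gamma+\delta)_i\ge m$, i.e.\ when $\delta_i\ge m-\gamma_i$. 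Since $\delta_i\in\{0,1,2\}$ this splits into a few ranges of $\gamma_i$.

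First I would dispose of the easy ranges. If $\gamma_i\ge m$, every monomial survives, so $\partial^\gamma\bar f=\partial^\gamma f$ and there is nothing to prove since $f$ is Lorentzian. If $\gamma_i\le m-3$, no monomial survives and $\partial^\gamma\bar f=0$. If $\gamma_i=m-2$, the only surviving monomial is $x_i^2$, so $\partial^\gamma\bar f=c_{ii}x_i^2$ with $c_{ii}\ge 0$. In each of these cases $\partial^\gamma\bar f$ plainly has at most one positive eigenvalue, so the only case requiring work is $\gamma_i=m-1$.

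For $\gamma_i=m-1$ a monomial $x^{\gamma+\delta}$ survives exactly when $\delta_i\ge 1$, that is, exactly the monomials divisible by $x_i$ survive. Hence $\partial^\gamma\bar f=x_i\cdot\ell$, where $\ell=c_{ii}x_i+2\sum_{k\ne i}c_{ik}x_k$ is a linear form all of whose coefficients are non-negative because $f$ is Lorentzian. As $x_i$ and $\ell$ are both Lorentzian of degree $1$, Theorem~\ref{thm:changeVar}(2) yields $\partial^\gamma\bar f\in\mathrm L_n^2$, and a Lorentzian polynomial of degree $2$ is, by definition, a quadratic form with at most one positive eigenvalue. (One can also argue directly from Lemma~\ref{lem:sylvester-criterion}: the only nonzero entries of $\bar M$ lie in its $i$-th row and column, so its rank is at most $2$; then $\det\bar M_{\{i\}}=c_{ii}\ge 0$, $\det\bar M_{\{i,k\}}=-c_{ik}^2\le 0$ for $k\ne i$, and every larger principal minor vanishes, so the sign condition $(-1)^{|I|}\det\bar M_I\le 0$ holds.)

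The one point that calls for care is the bookkeeping in the case $\gamma_i=m-1$: pinning down which entries of $M$ survive into $\bar M$. This surviving set is complementary to the one in the $\gamma_i=m$ case of the preceding lemma --- there one deletes every entry involving $x_i$, whereas here one keeps only those --- and consequently eigenvalue interlacing cannot be applied to $\bar M$ itself. What replaces it is the observation that $\partial^\gamma\bar f$ factors as a product of two linear forms.
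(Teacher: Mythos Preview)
Your proof is correct and follows essentially the same case analysis as the paper. The only difference is in the key case $\gamma_i=m-1$: the paper argues directly from Lemma~\ref{lem:sylvester-criterion} by observing that the only nonzero $2\times 2$ principal minors are $-c_{ik}^2\le 0$, whereas you first offer the cleaner observation that $\partial^\gamma\bar f=x_i\cdot\ell$ factors as a product of two degree-one Lorentzian polynomials and then appeal to Theorem~\ref{thm:changeVar}(2); you also give the minor computation as a parenthetical alternative, so your argument subsumes the paper's.
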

\begin{proof}
  Given a monomial differential operator $\partial^\alpha$ of degree $d-2$, we as before consider 
  the coefficients $\bar c_{\beta}$ of $\partial^\alpha \bar f$ and set \[\bar c_{ij}= \bar c_{ji}=
    \begin{cases}
      \frac{1}{2}\bar c_{e_i+e_j} & i \neq j\\
      \bar c_{2e_i}                & i = j
    \end{cases}
    .\]
  We set $\bar M = (\bar c_{ij})$.
  
  Similar to the case for the upper truncation observe that
  \begin{enumerate}
  \item If $\alpha_i \geq m$, then $\partial^\alpha \bar f=\partial^\alpha  f$. 
  \item If $\alpha_i < m-2$, then  $\partial^\alpha \bar f= 0$.
  \end{enumerate}
and so in each case $\partial^\alpha \bar f$ has at most one positive eigenvalue.  Thus it remains to consider the cases $\alpha_i=m-2$ and $\alpha_i=m-1$.

If $\alpha_i=m-2$, then $\partial^\alpha \bar f$ is supported only in $x_i^2$. Hence, the quadratic form has rank $1$ and therefore exactly one non-zero eigenvalue.

It remains to consider the case $\alpha_i=m-1$. Here, $\partial^{\alpha} f-\partial^\alpha \bar f$ is supported in the monomials which contain  $x_i$.  The matrix $\bar M$ has therefore non-zero entries only in the $i$-th row and column. The only non-zero principal minors have the form
  \[
    \left|
      \begin{matrix}
        c_i &c_{ij}\\
        c_{ij} &0
      \end{matrix}
\right| = -c_{ij}^2 \leq 0.
\]
Thus the result we want follows by Lemma~\ref{lem:sylvester-criterion}.
\end{proof}

\section{Dually Lorentzian polynomials}\label{sec:duallyLorentzian}

We introduce in this section the main concept of this paper, namely dually Lorentzian polynomials. In the main result of this section (Theorem~\ref{thm:operator-criterion}) we characterize dually Lorentzian polynomials as precisely those constant coefficients differential operators that preserve the Lorentzian property. We then derive some easy consequences of this statement and discuss an interesting class of examples.  

\begin{definition}[Normalization]
  The \emph{Normalization} operator $N$ on polynomials is the linear operator satisfying $N(x^\alpha)= \frac{1}{\alpha!}x^\alpha$.
  \end{definition}
  
  \begin{definition}[Dually Lorentzian Polynomials]
  For an element $s \in \RR_{\kappa}[x_1, \ldots x_n]$ we set $${s}^\vee := N(x^{\kappa}\cdot s(x^{-1}_1, \ldots, x^{-1}_n)).$$ 
  We say a homogeneous polynomial $s$ is \emph{dually Lorentzian} if $s^\vee$ is Lorentzian.
  \end{definition}

\begin{remark}
  For a given $s$ the dual $s^\vee$ is not uniquely defined since $\kappa$ is not fixed. However, by \cite[Lem.~7]{zbMATH07542930} the dual Lorentzian property is independent of this choice.
\end{remark}

\begin{theorem}
  \label{thm:operator-criterion}
  Let $s \in \RR_{\kappa}[x_1, \ldots x_n]$ be a homogeneous polynomial. Then the corresponding differential operator $\partial_s:=s(\partial_{x_1},\ldots,\partial_{x_n}) \in \RR[\partial_{x_1},\ldots,\partial_{x_n}]$ preserves the Lorentzian property if and only if $s$ is dually Lorentzian.
\end{theorem}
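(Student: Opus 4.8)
The plan is to prove the two implications separately; the ``only if'' direction is immediate and the ``if'' direction carries all the weight.

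\smallskip

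For the ``only if'' direction I would simply evaluate $\partial_s$ on the monomial $x^{\kappa}$, which is Lorentzian. Since $\partial^{\gamma}x^{\kappa}=\frac{\kappa!}{(\kappa-\gamma)!}\,x^{\kappa-\gamma}$ when $\gamma\le\kappa$ and vanishes otherwise, and every $\gamma$ occurring in $s$ satisfies $\gamma\le\kappa$, one gets $\partial_s(x^{\kappa})=\kappa!\sum_{\gamma}\frac{c_{\gamma}}{(\kappa-\gamma)!}x^{\kappa-\gamma}=\kappa!\,s^{\vee}$. So if $\partial_s$ preserves the Lorentzian property then $s^{\vee}$ is Lorentzian, i.e.\ $s$ is dually Lorentzian. (The same computation, applied to $\prod_i(x_i+y_i)^{\kappa_i}$ rather than $x^{\kappa}$, gives the cleaner identity $\partial_s\bigl[\prod_i(x_i+y_i)^{\kappa_i}\bigr]=\kappa!\,s^{\vee}(x_1+y_1,\dots,x_n+y_n)$, which I will return to.)

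\smallskip

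For the ``if'' direction, assume $s^{\vee}$ is Lorentzian (so in particular all $c_{\gamma}\ge 0$), fix a Lorentzian $f$ of degree $d$, and verify the three defining properties of $\mathrm L_n^{d-e}$ for $\partial_s f$, where $e=\deg s$. Nonnegativity of the coefficients is clear, since the coefficient of $x^{\eta}$ in $\partial_s f$ equals $\sum_{\gamma}c_{\gamma}\tfrac{(\eta+\gamma)!}{\eta!}f_{\eta+\gamma}$, a sum of nonnegative terms. For M-convexity of the support: by nonnegativity there is no cancellation, so $\supp(\partial_s f)=\{\eta:\exists\,\gamma\in\supp s,\ \eta+\gamma\in\supp f\}$, and since $\supp s=\kappa-\supp s^{\vee}$ this is exactly $\bigl((\supp f+\supp s^{\vee})-\kappa\bigr)\cap\NN^n$. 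Now $\supp f+\supp s^{\vee}=\supp(f\cdot s^{\vee})$ is M-convex because $f s^{\vee}$ is Lorentzian by Theorem~\ref{thm:changeVar}(2), and M-convexity survives both translation by a lattice vector and intersection with $\NN^n$ — the latter because in the exchange move $\alpha_i>\beta_i\ge 0$ forces $\alpha_i\ge 1$, so the resulting exponent stays nonnegative. This leaves the eigenvalue conditions: for every $\alpha$ with $|\alpha|=(d-e)-2$ one needs $\partial^{\alpha}(\partial_s f)=\partial_s(\partial^{\alpha}f)$ to have at most one positive eigenvalue, and since $\partial^{\alpha}f$ is Lorentzian of degree $e+2$ (derivatives of Lorentzian polynomials are Lorentzian, cf.\ \cite{BrandenHuh}) this reduces everything to the following statement, which I regard as the real content of the theorem: \emph{if $h\in\mathrm L_n^{e+2}$ and $s$ is dually Lorentzian of degree $e$, then the quadratic form $\partial_s h$ has at most one positive eigenvalue.}

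\smallskip

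To prove this quadratic statement I would check the minor condition of Lemma~\ref{lem:sylvester-criterion} for the matrix $M$ of $\partial_s h$, noting that $M=\tfrac12\sum_{|\gamma|=e}c_{\gamma}A^{(\gamma)}$ is a nonnegative combination of the matrices $A^{(\gamma)}$ of the Lorentzian quadratic forms $\partial^{\gamma}h$. This is exactly the hard part, and it is not formal: a nonnegative combination of symmetric matrices each with at most one positive eigenvalue need not have at most one positive eigenvalue, so one must genuinely use that the $A^{(\gamma)}$ are the Hessians of a single Lorentzian $h$ \emph{and} that the weights $c_{\gamma}$ are the (un-normalized) coefficients of the Lorentzian polynomial $s^{\vee}$. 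I would run an induction on $e$: the cases $e\le 1$ are trivial ($\partial_s$ is a scalar, respectively a nonnegative directional derivative), and in the inductive step one peels off a single derivative, using the identity $\partial_{x_k}s^{\vee}=(s_{\leqslant\kappa-e_k})^{\vee}$, which together with Proposition~\ref{prop:truncation} shows that truncations of dually Lorentzian polynomials are again dually Lorentzian and thus feeds smaller instances back in. Alternatively — and this is the viewpoint I would actually adopt if the relevant preservation principle is at hand — the whole ``if'' direction is an instance of the Borcea--Br\"and\'en/Br\"and\'en--Huh-type statement that a linear operator preserves Lorentzian polynomials as soon as its symbol is Lorentzian, together with the symbol computation above: $\kappa!\,s^{\vee}(x_1+y_1,\dots,x_n+y_n)$ is Lorentzian in the $2n$ variables exactly when $s^{\vee}$ is (it is $s^{\vee}$ composed with a nonnegative matrix, and conversely one recovers $s^{\vee}$ by setting $y=0$). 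Either way the crux is the quadratic lemma, i.e.\ the passage from ``symbol/dual Lorentzian'' to ``$\partial_s$ preserves the Lorentzian cone''.
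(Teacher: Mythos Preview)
Your ``only if'' direction and your alternative symbol-based argument are exactly the paper's proof: the paper computes the symbol of $\partial_s$ on $\RR_\gamma[x_1,\ldots,x_n]$ for any $\gamma\ge\kappa$, observes that up to the substitution $x_i\mapsto x_i+w_i$ it equals $\gamma!\,N(x^\gamma s(x_1^{-1},\ldots,x_n^{-1}))$, and then invokes \cite[Thm~3.2]{BrandenHuh}. So the viewpoint you say you ``would actually adopt'' is precisely the one the paper adopts. (One minor point: to cover an arbitrary Lorentzian $f$ you need the symbol for $\gamma$ large, not just $\gamma=\kappa$; the paper notes this and uses that the dual Lorentzian property is independent of the choice of $\kappa$.)

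Your primary direct approach correctly isolates the quadratic lemma as the crux, and your arguments for nonnegativity and M-convexity of the support are clean and correct (the paper does not give these, since the symbol route makes them unnecessary). But the induction you sketch for the quadratic lemma has a genuine gap. The identity $\partial_{x_k}s^\vee=(s_{\leqslant\kappa-e_k})^\vee$ shows that truncations of $s$ remain dually Lorentzian, but truncation does not lower the degree $e$, so it does not by itself feed an induction on $e$. If instead you peel off a derivative via Euler, writing $e\cdot\partial_s h=\sum_k\partial_{(\partial_{x_k}s)}(\partial_{x_k}h)$, you need two things: that each $\partial_{x_k}s$ is dually Lorentzian --- true, but this is Proposition~\ref{prop:dual-and-derivative} in the paper, whose proof already uses the co-symbol machinery, so it would have to be established independently --- and, more seriously, that a sum of quadratic forms each with at most one positive eigenvalue again has at most one positive eigenvalue, which is false in general. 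So the induction does not close as stated; you would need a further idea exploiting that all the Hessians come from a single Lorentzian $h$. The paper sidesteps this entirely by packaging the quadratic lemma into the symbol criterion \cite[Thm~3.2]{BrandenHuh}.
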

\begin{proof}    Let $\partial^\alpha \in \RR[\partial_{x_1},\ldots,\partial_{x_n}]$ be a monomial differential operator, and consider any $\gamma \geq \kappa$.
Then the symbol $\operatorname{sym}_{\partial^\alpha}$ of the linear operator
  \[
    \RR_{\gamma}[x_1,\ldots,x_n] \to \RR_{\gamma}[x_1,\ldots,x_n], f \mapsto \partial^\alpha f
  \]
  in the sense of \cite[Thm 3.2]{BrandenHuh} is
  \[\operatorname{sym}_{\partial^\alpha}(x,w)=\sum_{\beta \leq \gamma} {\gamma \choose \beta} \partial^\alpha(x^\beta)w^{\gamma-\beta} = \partial^\alpha\!\left(\prod_i(x_i+w_i)^{\gamma_i}\right).\]
Hence, we obtain
  \[\operatorname{sym}_{\partial^\alpha} = \frac{\gamma!}{(\gamma-\alpha)!}\prod_i(x_i+w_i)^{\gamma_i-\alpha_i}= \frac{\gamma!}{(\gamma-\alpha)!} \prod_i (x_i+w_i)^{\gamma_i}(x_i+w_i)^{-\alpha_i}.\]
  Thus, by linearity the symbol of $\partial_s \colon  \RR_{\gamma}[x_1,\ldots,x_n] \to \RR_{\gamma}[x_1,\ldots,x_n]$ with $s=\sum_\alpha \lambda_\alpha x^\alpha$ is given by
  \[
    \operatorname{sym}_{\partial_s}=\sum_\alpha \lambda_\alpha \cdot \frac{\gamma!}{(\gamma-\alpha)!} \cdot \prod_i (x_i+w_i)^{\gamma_i} \cdot \prod_i (x_i+w_i)^{-\alpha_i}.\]
  Now, by Theorem \ref{thm:changeVar}(1) the polynomial $\operatorname{sym}_{\partial_s}$ is Lorentzian if and only if the same is true for
  \begin{align*}
  \sum_\alpha \lambda_\alpha \frac{\gamma!}{(\gamma-\alpha)!} \cdot x^{\gamma} \cdot  x^{-\alpha} &= \gamma!  \cdot x^{\gamma} \cdot \sum_\alpha \frac{\lambda_\alpha}{(\gamma-\alpha)!} \cdot   x^{-\alpha}\\
                                   &= \gamma!\cdot N(x^{\gamma}\cdot s(x_1^{-1},\ldots,x^{-1}_n)),
  \end{align*}
 but this is precisely what we are assuming as $s$ is dually Lorentzian.   Thus, by \cite[Thm 3.2.]{BrandenHuh}, the operator $\partial_s \colon  \RR_{\gamma}[x_1,\ldots,x_n] \to \RR_{\gamma}[x_1,\ldots,x_n]$ maps Lorentzian polynomials to Lorentzian polynomials for arbitrary $\gamma \geq \kappa$. Hence,  $\partial_s$ preserves the Lorentzian property for arbitrary polynomials in $\RR[x_1,\ldots,x_n]$.

  For the other direction assume that $\partial_s$ maps Lorentzian polynomials to Lorentzian polynomials. Since every monomial is Lorentzian,  by assumption we obtain the Lorentzian property also for \[\partial_s(x^{\kappa}) =  \kappa!\cdot N(x^{\kappa}\cdot s(x_1^{-1},\ldots,x^{-1}_n)).\]
\end{proof}

We now turn to some examples of dually Lorentzian polynomials.

\begin{theorem}
  \label{thm:schubert}
  Schubert polynomials  (and therefore Schur polynomials) are dually Lorentzian. 
\end{theorem}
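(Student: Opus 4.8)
The plan is to reduce the statement to known results about Schur polynomials and about the Lorentzian property. Since Schur polynomials appear as a special case of Schubert polynomials (indexed by Grassmannian permutations), and more importantly since every Schubert polynomial can be written as a nonzero linear combination of Schur polynomials with nonnegative coefficients in an appropriate sense — or better, since products and certain specializations connect the two — it suffices to verify the claim for Schur polynomials and then propagate it. Actually the cleanest route is the reverse of what the parenthetical suggests: I would first recall that Schur polynomials $s_\lambda(x_1,\ldots,x_n)$ are known to be Lorentzian (this is a theorem of Huh--Matherne--M\'esz\'aros--St.\ Dizier, and in fact normalized Schur polynomials are Lorentzian), and then observe that the dual operation $s \mapsto s^\vee$ interacts with Schur polynomials in a controlled way.

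The key computation is to identify $s_\lambda^\vee$. Fix $\kappa = (\lambda_1, \lambda_1, \ldots, \lambda_1)$ (or any componentwise bound on the exponents appearing in $s_\lambda$), and compute $x^\kappa \cdot s_\lambda(x_1^{-1}, \ldots, x_n^{-1})$. Because $s_\lambda$ is symmetric and its exponent vectors are the compositions obtained by permuting $\lambda$ (with the appropriate Kostka-weighted multiplicities), the substitution $x_i \mapsto x_i^{-1}$ followed by multiplication by $x^\kappa$ sends the exponent $\alpha$ to $\kappa - \alpha$; the resulting exponent vectors are again a permutation-closed set, and in fact one recognizes $x^\kappa s_\lambda(x^{-1})$ as (up to relabeling) the Schur polynomial $s_{\mu}$ for the complementary partition $\mu$ inside the box $(\kappa_1)^n = (\lambda_1)^n$, i.e. $\mu_i = \lambda_1 - \lambda_{n+1-i}$. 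Applying the normalization operator $N$ then yields the \emph{normalized} Schur polynomial $\overline{s_\mu}$, which is Lorentzian by the Huh--Matherne--M\'esz\'aros--St.\ Dizier theorem. Hence $s_\lambda$ is dually Lorentzian. For the Schubert case one uses the same box-complementation identity: the Schubert polynomial $\mathfrak{S}_w$ has $\mathfrak{S}_w^\vee$ (with the box chosen from the staircase bounding the exponents) equal, up to the $N$-normalization and a permutation of variables, to a normalized Schubert polynomial $\mathfrak{S}_{w'}$ for a dual permutation $w'$ (conjugation by the longest element $w_0$), and normalized Schubert polynomials are Lorentzian — again by work of Huh--Matherne--M\'esz\'aros--St.\ Dizier, or by the dual-Lorentzian criterion for products combined with the monomial-positivity structure.

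The main obstacle I anticipate is verifying precisely the box-complementation identity $\mathfrak{S}_w^\vee = $ (normalized) $\mathfrak{S}_{w_0 w w_0}$ up to reversing the variables — checking that the exponent support of a Schubert polynomial sits inside a box in the way required and that the complement in that box is again a Schubert exponent-support, with the \emph{correct} coefficients surviving the $N$-normalization. For Schur polynomials this is classical ($s_\lambda$ in the $n \times \lambda_1$ box complements to $s_{\mu}$), but for general Schubert polynomials the analogous statement, while known, requires care with the Lehmer code and the definition of $\mathfrak{S}_w$ via divided differences; one must confirm that dividing by $\alpha!$ does not destroy the coefficient pattern that makes the complement a genuine normalized Schubert polynomial. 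An alternative that sidesteps this: invoke directly that normalized Schubert polynomials are Lorentzian, write the claim as ``$\mathfrak{S}_w^\vee$ is Lorentzian,'' and then use Theorem~\ref{thm:operator-criterion} in the contrapositive together with the fact that $\partial_{\mathfrak{S}_w}$ applied to a suitable monomial returns (a scalar multiple of) a normalized Schubert polynomial — but this still reduces to the same combinatorial identity. I would therefore isolate that identity as the crux and prove it by a direct manipulation of the code of $w$ and $w_0 w w_0$.
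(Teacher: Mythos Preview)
Your approach for Schur polynomials is sound and is essentially the content of Lemma~\ref{lem:Schur} (the box-complementation identity $x_1^d\cdots x_n^d\cdot s_\lambda(x_1^{-1},\ldots,x_n^{-1}) = s_\mu(x)$). The paper records exactly this, though the logical flow in the cited work of Huh--Matherne--M\'esz\'aros--St.~Dizier runs in the opposite direction: they first prove that Schubert (hence Schur) polynomials are dually Lorentzian, and then use Lemma~\ref{lem:Schur} to \emph{deduce} that normalized Schur polynomials are Lorentzian.

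For Schubert polynomials, however, your strategy has a genuine gap. The box-complementation identity you posit---that $\mathfrak{S}_w^\vee$ equals, up to variable reversal, $N(\mathfrak{S}_{w_0 w w_0})$---is false already for $w = 213 \in S_3$: with the staircase box $\delta = (2,1)$ one computes $\mathfrak{S}_{213}^\vee = N(x_1 x_2) = x_1 x_2$, whereas $w_0 w w_0 = 132$ and $N(\mathfrak{S}_{132}) = x_1 + x_2$, which does not even have the right degree. More seriously, the statement you fall back on---that normalized Schubert polynomials $N(\mathfrak{S}_w)$ are Lorentzian---is \emph{not} a theorem: it is an open conjecture of Huh--Matherne--M\'esz\'aros--St.~Dizier, proved by them only in the Schur case. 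Were a bijective box-complementation identity of the kind you describe available, this conjecture would follow at once from the known fact that every $\mathfrak{S}_w^\vee$ is Lorentzian; that the conjecture remains open is itself evidence that no such identity exists.

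The paper's argument is entirely different and bypasses combinatorics. It simply cites the theorem of Huh--Matherne--M\'esz\'aros--St.~Dizier that $\mathfrak{S}_w^\vee$ agrees with the volume polynomial of a collection of nef line bundles on an irreducible subvariety of a product of projective spaces, constructed as a degeneracy locus. By \cite[Theorem~4.6]{BrandenHuh} such volume polynomials are Lorentzian, hence $\mathfrak{S}_w$ is dually Lorentzian. No complementation identity for Schubert polynomials is needed or claimed.
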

\begin{proof}
This is proved in {\cite[Thm. 6]{zbMATH07542930}}.  In essence, the authors there show that for every Schubert polynomial $\mathfrak{S}_w$ the polynomial $\mathfrak{S}_w^\vee$ agrees with the volume polynomial of a collection of nef line bundles on an irreducible variety in a product of projective spaces that is constructed as a degeneracy locus. Hence, by \cite[Theorem 4.6.]{BrandenHuh} $\mathfrak{S}_w^\vee$ is Lorentzian and therefore $\mathfrak{S}_w$ is dually Lorentzian. Furthermore, for the special case of Schur polynomials the authors use \ref{lem:Schur} below to show that not only Schur polynomials are dually Lorentzian, but also that  normalized Schur polynomials $N(s_\lambda)$ are Lorentzian, see the proof of {\cite[Thm. 3]{zbMATH07542930}}.
\end{proof}

	\begin{lemma}
		\label{lem:Schur} For every partition $\lambda$ of length $\leq n$ and degree $|\lambda|= d$, 
		$$x_{1}^d\cdots x_n^d\cdot s_\lambda(x_1^{-1}, \ldots, x_n^{-1}) = s_\kappa(x_1,\ldots,x_n)$$ where 
		$\kappa=(d-\lambda_n,\ldots,d-\lambda_1)$.
	\end{lemma}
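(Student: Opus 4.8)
The identity to prove is purely about Schur polynomials, so the natural approach is to use the combinatorial or algebraic description of $s_\lambda$ directly. First I would recall that for a partition $\lambda = (\lambda_1 \ge \dots \ge \lambda_n \ge 0)$ of length $\le n$, the Schur polynomial in the $n$ variables $x_1,\dots,x_n$ has the bialternant (Jacobi–Trudi/Weyl) form
\[
 s_\lambda(x_1,\dots,x_n) = \frac{\det\bigl(x_i^{\lambda_j + n - j}\bigr)_{1 \le i,j \le n}}{\det\bigl(x_i^{n-j}\bigr)_{1 \le i,j \le n}},
\]
the denominator being the Vandermonde $\prod_{i<j}(x_i - x_j)$. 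The strategy is to substitute $x_i \mapsto x_i^{-1}$, clear denominators by multiplying through by $x_1^d \cdots x_n^d$, and recognize the result as another bialternant of the same shape, with the reversed-complemented partition $\kappa = (d - \lambda_n, \dots, d - \lambda_1)$.

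**Key steps.** The computation splits across numerator and denominator. For the numerator, $\det\bigl(x_i^{-(\lambda_j + n - j)}\bigr)$; I factor $x_i^{-(n-1)}$ out of each row $i$ to make all exponents of the form $-(\lambda_j + n - j) + (n-1) = (n-1) - (n-j) - \lambda_j = (j-1) - \lambda_j$, which can still be negative, so it is cleaner instead to factor $x_i^{-d-(n-1)}$ (or simply $x_i^{-N}$ for $N$ large enough) and track the total power of $x_1 \cdots x_n$ pulled out. The denominator $\det\bigl(x_i^{-(n-j)}\bigr)$ is, up to a monomial factor $\prod_i x_i^{-(n-1)}$, again a Vandermonde determinant in the $x_i$ after reversing the column order (column $j \leftrightarrow$ column $n+1-j$), which contributes only a sign $(-1)^{\binom{n}{2}}$ that cancels with the identical reversal applied to the numerator. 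In the numerator, after the reversal $j \mapsto n+1-j$, the exponent in column $j$ becomes $x_i^{-(\lambda_{n+1-j} + n - (n+1-j))} = x_i^{-(\lambda_{n+1-j} + j - 1)}$; multiplying row $i$ by $x_i^{d + n - 1}$ turns this into $x_i^{\,d - \lambda_{n+1-j} + n - j} = x_i^{\,\kappa_j + n - j}$ with $\kappa_j = d - \lambda_{n+1-j}$, exactly the claimed shape. Book-keeping the monomial prefactors: the numerator picks up $\prod_i x_i^{d+n-1}$ and the denominator picks up $\prod_i x_i^{n-1}$, so their ratio contributes $\prod_i x_i^{d}$, which is precisely the factor $x_1^d \cdots x_n^d$ already present on the left-hand side — hence the two sides agree.

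**Main obstacle.** The only real subtlety is the sign and index-reversal bookkeeping: one must verify that the column-reversal sign $(-1)^{\binom n2}$ and the row-rescaling monomials appear identically in numerator and denominator so that everything cancels cleanly, and that $\kappa = (d - \lambda_n, \dots, d - \lambda_1)$ really is a partition (i.e. weakly decreasing) — which holds because $\lambda_1 \ge \dots \ge \lambda_n$ forces $d - \lambda_n \ge \dots \ge d - \lambda_1$, and $d - \lambda_1 \ge 0$ since $d = |\lambda| \ge \lambda_1$. An alternative, perhaps cleaner, route avoiding determinants entirely is via the combinatorial definition $s_\lambda = \sum_{T} x^{\operatorname{wt}(T)}$ over semistandard tableaux: the map $x^\alpha \mapsto x_1^d \cdots x_n^d \cdot x^{-\alpha}$ sends $x^{\operatorname{wt}(T)}$ to $x^{\operatorname{wt}(T')}$ where $T'$ is obtained by the standard complementation/rotation bijection on tableaux of shape $\lambda$ inside the $n \times \lambda_1$ box sending $\lambda$-tableaux to $\kappa$-tableaux, but spelling out that bijection carefully is comparable in length to the determinant manipulation, so I would go with the bialternant argument.
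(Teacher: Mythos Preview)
Your bialternant argument is correct and complete; the sign and monomial bookkeeping you outline works exactly as you say. However, the paper proves this lemma by a quite different, representation-theoretic route: it observes that for any rational $\GL_n$-representation $W$ one has $\chi_{W^*}(x_1,\ldots,x_n)=\chi_W(x_1^{-1},\ldots,x_n^{-1})$ and that the character of $\det$ is $x_1\cdots x_n$, so if $W$ is the irreducible with highest weight $\lambda$ then $x_1^d\cdots x_n^d\, s_\lambda(x_1^{-1},\ldots,x_n^{-1})$ is the character of $\det^d\otimes W^*$, which is polynomial with highest weight $\kappa=(d-\lambda_n,\ldots,d-\lambda_1)$ and hence equals $s_\kappa$. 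The paper's argument is shorter and avoids all index-reversal and sign tracking, but it presupposes familiarity with characters of $\GL_n$ and the highest-weight classification. Your determinant computation is more elementary and entirely self-contained, requiring only the bialternant definition of $s_\lambda$; it would be preferable in a context where representation theory is not already in play. The two arguments are of course cousins, since the bialternant formula is the Weyl character formula for $\GL_n$, but your version unpacks that relationship rather than invoking it.
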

\begin{proof}
	The character of every rational  representation $W$  of $\GL_n(K)$ satisfies $$\chi_{W*}(x_1,\ldots,x_n)=\chi_{W}(x_1^{-1},\ldots,x_n^{-1})$$ and the character of 
	$\det \colon \GL_n(K)\to \GL_1(K)$, is $x_{1}\cdots x_n$. Hence if $W$ is the $\GL_n(K)$-representation with highest weight $\lambda$, 
	$$ x_{1}^d\cdots x_n^d\cdot s_\lambda(x_1^{-1}, \ldots, x_n^{-1}) = \chi_{\det^d \otimes W^*}(x_1,\ldots, x_n).$$
	The highest weight of $\det^d \otimes W^*$ is  $\kappa=(d-\lambda_n,\ldots,d-\lambda_1)$. Therefore $\det^d \otimes W^*$ is polynomial and its character is $s_\kappa(x_1,\ldots, x_n)$. 
\end{proof}

\begin{example} From the above discussion for any Schur polynomial $s_{\lambda}$ both $N({s_\lambda})$ and  $s^\vee_{\lambda}$ are Lorentzian.   On the other hand, the polynomial $f=w^2+3wx+3x^2+3wy+3xy+2y^2+3wz+2xz+2yz+z^2$ is dually Lorentzian, but $N(f)$ is not Lorentzian. \end{example}

\begin{example}\label{ex:duallylorentzianbutnotschurpositive}
Consider the difference of Schur polynomials
\begin{align*}
p(x_1,x_2,x_3)&= s_{2,1}(x_1,x_2,x_3) - s_{1,1,1}(x_1,x_2,x_3) \\
&= x_0^2x_1 + x_0x_1^2 + x_0^2x_2 + x_0x_1x_2 + x_1^2x_2 + x_0x_2^2 + x_1x_2^2.
\end{align*}
A direct computation shows that $N(p)$ is Lorentzian.  Thus by Lemma \ref{lem:Schur}  the polynomial $$q(x_1,x_2,x_3):=s_{3,2,1}(x_1,x_2,x_3) - s_{2,2,2}(x_1,x_2,x_3)$$ is dually Lorentzian but clearly not Schur positive
\end{example}

\begin{proposition}
  \label{prop:dual-product}
  If $f$ and $g$ are dually Lorentzian, so is their product $fg$.
\end{proposition}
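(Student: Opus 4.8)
The plan is to deduce this immediately from the operator characterization of Theorem~\ref{thm:operator-criterion}, rather than trying to work directly with $(fg)^\vee$. The first observation is that the assignment $s\mapsto \partial_s = s(\partial_{x_1},\ldots,\partial_{x_n})$ is a ring homomorphism from $\RR[x_1,\ldots,x_n]$ into the commutative ring of constant-coefficient differential operators; in particular $\partial_{fg} = \partial_f\circ \partial_g$.

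Now, since $f$ and $g$ are dually Lorentzian (and homogeneous), Theorem~\ref{thm:operator-criterion} says that $\partial_f$ and $\partial_g$ each preserve the Lorentzian property. A composition of two operators preserving the Lorentzian property again preserves it, so $\partial_{fg}=\partial_f\circ\partial_g$ preserves the Lorentzian property. Since $fg$ is homogeneous (of degree $\deg f + \deg g$), the converse direction of Theorem~\ref{thm:operator-criterion} applied to $\partial_{fg}$ then shows that $fg$ is dually Lorentzian.

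I do not expect a real obstacle here: the entire content sits in Theorem~\ref{thm:operator-criterion}, and the remaining steps are the (standard) fact that $s\mapsto\partial_s$ is multiplicative together with the trivial remark that a composition of Lorentzian-preserving operators is Lorentzian-preserving. The only points requiring a word of care are the degenerate cases in which $f$ or $g$ has degree $0$ or $1$ (or is zero), which are covered directly by the conventions in the definition of (dually) Lorentzian polynomials. It is worth noting why one wants the operator route: a direct argument would try to relate $(fg)^\vee$ to $f^\vee$ and $g^\vee$, but writing $\kappa=\kappa'+\kappa''$ gives only $x^\kappa (fg)(x_1^{-1},\ldots,x_n^{-1}) = \bigl(x^{\kappa'}f(x_1^{-1},\ldots,x_n^{-1})\bigr)\bigl(x^{\kappa''}g(x_1^{-1},\ldots,x_n^{-1})\bigr)$, and the normalization $N$ does not commute with products, so $(fg)^\vee\neq f^\vee g^\vee$ in general and Theorem~\ref{thm:changeVar}(2) cannot be applied directly.
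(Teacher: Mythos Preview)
Your proposal is correct and follows exactly the paper's own argument: the paper's proof is the single sentence ``This is an immediate consequence of Theorem~\ref{thm:operator-criterion}, as $\partial_{fg} = \partial_f \circ \partial_g$,'' which is precisely your reasoning. Your additional remarks on degenerate cases and on why a direct attack via $(fg)^\vee$ fails are accurate but not needed for the proof itself.
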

\begin{proof}
  This is an immediate consequence of Theorem~\ref{thm:operator-criterion}, as $\partial_{fg} = \partial_f \circ \partial_g$.
\end{proof}

\begin{corollary}
\label{cor:dual-truncation}
  If $f$ is dually Lorentzian, so are its truncations.
\end{corollary}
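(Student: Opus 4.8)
The plan is to reduce the statement to the already-established fact that truncations of Lorentzian polynomials are Lorentzian (Proposition~\ref{prop:truncation}), by tracking how the $\kappa$-truncation operation on $s$ interacts with the duality operation $s\mapsto s^\vee$. First I would fix a multidegree bound $\kappa$ for $s$ and recall that $s^\vee = N(x^\kappa s(x_1^{-1},\ldots,x_n^{-1}))$; writing $s = \sum_{\alpha\le\kappa}\lambda_\alpha x^\alpha$, one has $s^\vee = \sum_{\alpha\le\kappa}\frac{\lambda_\alpha}{(\kappa-\alpha)!}x^{\kappa-\alpha}$. The key observation is that the substitution $\alpha\mapsto\kappa-\alpha$ is an order-reversing bijection on $\{\alpha:\alpha\le\kappa\}$: the condition $\alpha\le\gamma$ on the exponents of $s$ corresponds exactly to $\kappa-\gamma\le\kappa-\alpha$ on the exponents of $s^\vee$, and similarly $\alpha\ge\delta$ corresponds to $\kappa-\alpha\le\kappa-\delta$. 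Consequently, for any bound $\gamma\le\kappa$ we get $(s_{\leqslant\gamma})^\vee = (s^\vee)_{\geqslant\kappa-\gamma}$ and $(s_{\geqslant\delta})^\vee = (s^\vee)_{\leqslant\kappa-\delta}$, where on the left the dual is taken with respect to the bound $\kappa$ (note $s_{\leqslant\gamma}$ and $s_{\geqslant\delta}$ still have multidegree $\le\kappa$).

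Granting these identities, the corollary is immediate: if $f$ is dually Lorentzian then $f^\vee$ is Lorentzian, hence by Proposition~\ref{prop:truncation} every truncation of $f^\vee$ is Lorentzian; in particular $(f^\vee)_{\geqslant\kappa-\gamma}$ and $(f^\vee)_{\leqslant\kappa-\delta}$ are Lorentzian, so by the identities above $(f_{\leqslant\gamma})^\vee$ and $(f_{\geqslant\delta})^\vee$ are Lorentzian, which says precisely that $f_{\leqslant\gamma}$ and $f_{\geqslant\delta}$ are dually Lorentzian. One should also note that if a truncation of $f$ is identically zero then it is trivially dually Lorentzian, so no degeneracy issue arises; and the independence of the dually Lorentzian property from the choice of $\kappa$ (already invoked in the paper) guarantees the conclusion does not depend on having worked with this particular $\kappa$.

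I do not expect a serious obstacle here — the whole content is the combinatorial bookkeeping that the exponent reversal $\alpha\mapsto\kappa-\alpha$ swaps lower and upper truncations and is compatible with the normalization factor $1/(\kappa-\alpha)!$. The one point requiring a little care is making sure the dual is taken with a fixed bound $\kappa$ on both sides throughout (so that the reversal map is a genuine involution on the relevant index set), and then appealing once at the end to the fact that being dually Lorentzian is independent of that bound. An alternative, even shorter route would be to invoke Theorem~\ref{thm:operator-criterion}: since $f$ dually Lorentzian means $\partial_f$ preserves the Lorentzian property, one could try to realize $\partial_{f_{\leqslant\gamma}}$ as a composition of $\partial_f$ with Lorentzian-preserving operators; but this seems to amount to essentially the same truncation argument in disguise, so I would present the direct computation above.
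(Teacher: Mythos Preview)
Your proposal is correct and follows essentially the same approach as the paper: the authors record exactly the identities $(f_{\geqslant \gamma})^\vee= (f^\vee)_{\leqslant \kappa-\gamma}$ and $(f_{\leqslant \gamma})^\vee= (f^\vee)_{\geqslant \kappa-\gamma}$ and then appeal to Proposition~\ref{prop:truncation}. Your write-up merely makes explicit the bookkeeping behind these identities (the order-reversing bijection $\alpha\mapsto\kappa-\alpha$ and compatibility with the normalization), which the paper leaves implicit.
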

\begin{proof}
  We have 
  \begin{equation}\label{eq:trunc}(f_{\geqslant \gamma})^\vee= (f^\vee)_{\leqslant \kappa-\gamma}\quad \text{and}\quad (f_{\leqslant \gamma})^\vee= (f^\vee)_{\geqslant \kappa-\gamma}.\end{equation}
   Hence the claim follows from Proposition~\ref{prop:truncation}.
\end{proof}

\section{Operators preserving the dual Lorentzian property}
\label{sec:oper-pres-dual}
Note, that by definition the map $\dottedbox^\vee \colon f \mapsto f^\vee$ is a bijective linear operator between dually Lorentzian polynomials and Lorentzian polynomials in $\RR_{\kappa}[x_1, \ldots,x_n]$. This has inverse
\[
  \dottedbox^\wedge \colon f \mapsto f^\wedge := x^\kappa \cdot (N^{-1}(f))(x_1^{-1},\ldots,x_n^{-1}),
\]
where $N^{-1}(\sum_{\alpha} c_\alpha x^{\alpha}) := \sum \alpha! c_{\alpha} x^{\alpha}$.

Let \[T \colon \RR_\kappa[x_1,\ldots,x_n] \to \RR_\gamma[x_1,\ldots,x_n]\] be a homogeneous linear operator. We  define its dual by
\[T^\vee \colon \RR_\kappa[x_1,\ldots,x_n] \to \RR_\gamma[x_1,\ldots,x_n];\quad f \mapsto T(f^\wedge)^\vee,\]
resulting in the commutative diagram
\[  \begin{tikzcd}
    \RR_{\kappa}[x_1, \ldots,x_n] \arrow[r,"T"]\arrow[d,"\dottedbox^\vee"] & \RR_{\gamma}[x_1, \ldots,x_n]\\
      \RR_{\kappa}[x_1, \ldots,x_n] \arrow[r,"T^\vee"]  &\RR_{\gamma}[x_1, \ldots,x_n]\arrow[u,"\dottedbox^\wedge"] 
    \end{tikzcd}\]
We define the \emph{co-symbol} of $T$ to be the symbol $\operatorname{sym}_{T^\vee}$ of $T^\vee$.

\begin{lemma}
\label{lem:dual-symbol}
  If the co-symbol $\operatorname{sym}_{T^\vee}$ is Lorentzian, then the operator $T$ preserves the dual Lorentzian property.
\end{lemma}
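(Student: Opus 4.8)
The plan is to relate the dual Lorentzian property of $Tf$ to the (ordinary) Lorentzian property of $T^\vee$ acting on the associated duals, and then apply Br\"and\'en--Huh's symbol criterion \cite[Thm 3.2]{BrandenHuh} to $T^\vee$. Concretely, suppose $f \in \RR_\kappa[x_1,\ldots,x_n]$ is dually Lorentzian, so that $f^\vee$ is Lorentzian. By the commutativity of the diagram above, $(Tf)^\vee = T^\vee(f^\vee)$, provided one first checks the bookkeeping point that $f^\wedge{}^\vee = f$ (i.e. $\dottedbox^\wedge$ is genuinely the inverse of $\dottedbox^\vee$ on $\RR_\kappa$), so that $T^\vee(f^\vee) = T((f^\vee)^\wedge)^\vee = T(f)^\vee$. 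Thus $Tf$ is dually Lorentzian if and only if $T^\vee(f^\vee)$ is Lorentzian.

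First I would make precise the statement that $\dottedbox^\vee$ and $\dottedbox^\wedge$ are mutually inverse linear bijections on $\RR_\kappa[x_1,\ldots,x_n]$: this is immediate from the definitions of $N$, $N^{-1}$ and the involution $x_i \mapsto x_i^{-1}$ combined with multiplication by $x^\kappa$, once one notes $x^\kappa \cdot (x^{-\alpha}) = x^{\kappa-\alpha}$ and $\alpha \mapsto \kappa-\alpha$ is an involution on $\{\alpha : \alpha \le \kappa\}$. Next I would observe that, by the definition of $T^\vee$, the diagram commutes on the nose, so $T^\vee$ restricted to the image of $\dottedbox^\vee$ sends $f^\vee$ to $(Tf)^\vee$ exactly as above. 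Then, since $f$ ranges over all dually Lorentzian polynomials precisely when $f^\vee$ ranges over all Lorentzian polynomials in $\RR_\kappa$, the operator $T$ preserves the dual Lorentzian property if and only if $T^\vee \colon \RR_\kappa[x_1,\ldots,x_n] \to \RR_\gamma[x_1,\ldots,x_n]$ preserves the Lorentzian property.

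Finally I would invoke \cite[Thm 3.2]{BrandenHuh}: a homogeneous linear operator between spaces of bounded-degree polynomials preserves the Lorentzian property whenever its symbol $\operatorname{sym}_{T^\vee}$ is a Lorentzian polynomial (in the doubled set of variables). By hypothesis the co-symbol $\operatorname{sym}_{T^\vee}$ is Lorentzian, so $T^\vee$ preserves the Lorentzian property, and hence $T$ preserves the dual Lorentzian property, as desired.

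The only genuinely delicate point I anticipate is matching conventions so that $(Tf)^\vee = T^\vee(f^\vee)$ holds verbatim with the chosen multidegrees $\kappa$ and $\gamma$ — in particular one must be slightly careful that $T$ is specified as a map $\RR_\kappa \to \RR_\gamma$ so that $\operatorname{sym}_{T^\vee}$ is formed with respect to the correct degree bounds; once that is set up the argument is a two-line diagram chase followed by the citation.
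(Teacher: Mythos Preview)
Your proposal is correct and follows essentially the same approach as the paper's own proof: the paper simply notes that, by the commutative diagram, $T$ preserves the dual Lorentzian property if and only if $T^\vee$ preserves the Lorentzian property, and then invokes \cite[Thm 3.2]{BrandenHuh}. Your version spells out the bookkeeping (that $\dottedbox^\vee$ and $\dottedbox^\wedge$ are mutually inverse and that $(Tf)^\vee = T^\vee(f^\vee)$) more explicitly, but the underlying argument is identical.
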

\begin{proof}
  It follows from the observation above, that $T$ preserves the dual Lorentzian property if and only if $T^\vee$ preserves the Lorentzian property.
  By \cite[Thm 3.2]{BrandenHuh} the operator $T^\vee$ preserves the Lorentzian property if $\operatorname{sym}_{T^\vee}$ is Lorentzian.
\end{proof}

\begin{example}
  Consider the linear operator $M \colon \RR_\kappa[x_1,\ldots,x_n] \to \RR_\kappa[x_1,\ldots,x_n]$ defined by $M(x^\beta)=\frac{1}{(\kappa-\beta)!}x^\beta$. Then it is easy to see that $M^\vee=N$.  Since by \cite[Cor.~3.7]{BrandenHuh} $N$ preserves the Lorentzian property, we deduce that $M$ preserves the dual Lorentzian property.
\end{example}

 


\begin{lemma}
  \label{lem:T-dual-derivative1}
    Consider the linear differential operator $\partial^\alpha$ on $\RR_\kappa[x_1,\ldots,x_n]$.  Then we have 
    \[(\partial^{\alpha})^\vee(x^\beta)=
    \begin{cases}
      \frac{(\kappa-\beta)!\beta!}{(\kappa-\beta-\alpha)!(\beta+\alpha)!}\cdot x^{\beta+\alpha} & \alpha+\beta \leq \kappa\\
      0 & \text{otherwise.}      
    \end{cases}
  \]
\end{lemma}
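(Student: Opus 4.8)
The plan is to unwind the three operations that make up $(\partial^\alpha)^\vee$ — namely $f\mapsto f^\wedge$, then $f\mapsto\partial^\alpha f$, then $f\mapsto f^\vee$ — evaluated on a single monomial $x^\beta$ with $\beta\le\kappa$, and to keep track of the scalar that appears at each stage. Since $(\partial^\alpha)^\vee(f)=\bigl(\partial^\alpha(f^\wedge)\bigr)^\vee$ and $(\partial^\alpha)^\vee$ is linear, the formula on monomials determines the operator entirely.

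First I would compute $(x^\beta)^\wedge$. Since $N^{-1}(x^\beta)=\beta!\,x^\beta$, the definition of $\dottedbox^\wedge$ gives
\[(x^\beta)^\wedge=x^\kappa\cdot\bigl(N^{-1}(x^\beta)\bigr)(x_1^{-1},\ldots,x_n^{-1})=\beta!\,x^{\kappa-\beta},\]
which is legitimate because $\beta\le\kappa$. Next I would apply $\partial^\alpha$, using the elementary identity $\partial^\alpha x^\gamma=\tfrac{\gamma!}{(\gamma-\alpha)!}x^{\gamma-\alpha}$ when $\alpha\le\gamma$ and $\partial^\alpha x^\gamma=0$ otherwise. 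Taking $\gamma=\kappa-\beta$ and observing that $\alpha\le\kappa-\beta$ is the same condition as $\alpha+\beta\le\kappa$, this yields $0$ unless $\alpha+\beta\le\kappa$, in which case
\[\partial^\alpha\bigl(\beta!\,x^{\kappa-\beta}\bigr)=\frac{\beta!\,(\kappa-\beta)!}{(\kappa-\beta-\alpha)!}\,x^{\kappa-\beta-\alpha}.\]
Finally, since in this case $x^{\kappa-\beta-\alpha}\in\RR_\kappa[x_1,\ldots,x_n]$, the definition of $\dottedbox^\vee$ with the same exponent $\kappa$ gives
\[\bigl(x^{\kappa-\beta-\alpha}\bigr)^\vee=N\bigl(x^\kappa\cdot x^{-(\kappa-\beta-\alpha)}\bigr)=N(x^{\alpha+\beta})=\frac{1}{(\alpha+\beta)!}\,x^{\alpha+\beta}.\]
Multiplying the scalars from the three steps produces exactly $\frac{(\kappa-\beta)!\,\beta!}{(\kappa-\beta-\alpha)!\,(\beta+\alpha)!}\,x^{\beta+\alpha}$, while in the complementary case the middle step already returns $0$, so $(\partial^\alpha)^\vee(x^\beta)=0$.

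This is purely a bookkeeping computation and I do not anticipate a genuine obstacle; the only points needing a moment of care are the equivalence $\alpha\le\kappa-\beta\iff\alpha+\beta\le\kappa$, the consistent use of the multi-index factorial conventions, and the observation that the intermediate monomial $x^{\kappa-\beta-\alpha}$ really lies in $\RR_\kappa[x_1,\ldots,x_n]$, so that the fixed-$\kappa$ dual $\dottedbox^\vee$ is applicable to it.
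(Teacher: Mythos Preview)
Your proof is correct and follows essentially the same route as the paper's own argument: both unwind the composition $(\partial^\alpha)^\vee = (\,\cdot\,)^\vee \circ \partial^\alpha \circ (\,\cdot\,)^\wedge$ on a monomial $x^\beta$, the only cosmetic difference being that the paper packages the case distinction into the truncation notation $(\cdot)_{\geq 0}$ and $(\cdot)_{\leq \kappa}$ (via the identity $(f_{\geqslant\gamma})^\vee=(f^\vee)_{\leqslant\kappa-\gamma}$), whereas you separate the two cases explicitly.
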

\begin{proof}
  By elementary algebra and \eqref{eq:trunc} we obtain
    \begin{align*}
    \partial^\alpha((x^\beta)^\wedge)^\vee = \partial^\alpha\left(\beta!\cdot x^{\kappa-\beta}\right)^\vee
                                    &= \left(\left( \frac{(\kappa-\beta)!\beta!}{(\kappa-\beta-\alpha)!}\cdot x^{\kappa-\beta-\alpha}\right)_{\geq 0}\right)^\vee\\
                                     &= \left(\frac{(\kappa-\beta)!\beta!}{(\kappa-\beta-\alpha)!(\beta+\alpha)!}\cdot x^{ \beta+\alpha}\right)_{\leq \kappa}.
  \end{align*}
\end{proof}
We now prove a partial analogue of Theorem~\ref{thm:operator-criterion} for the dual Lorentzian property.
\begin{proposition}
  \label{prop:operator-criterion-dual}
  Let $f \in \RR[x_1,\ldots,x_n]$ be a polynomial. The differential operator $\partial_f =f(\partial_{x_1},\ldots,\partial_{x_n}) \in \RR[\partial_{x_1},\ldots,\partial_{x_n}]$ preserves the dual Lorentzian property if $f$ is Lorentzian.
\end{proposition}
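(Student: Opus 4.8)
The plan is to verify the hypothesis of Lemma~\ref{lem:dual-symbol}. Fix a multidegree $\kappa$ with respect to which the dual Lorentzian property is taken; since differentiation never raises an exponent, $\partial_f$ restricts to a homogeneous linear operator $T\colon\RR_\kappa[x_1,\ldots,x_n]\to\RR_\kappa[x_1,\ldots,x_n]$, and by Lemma~\ref{lem:dual-symbol} it suffices to show that the co-symbol $\operatorname{sym}_{T^\vee}$ is Lorentzian.

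First I would compute $\operatorname{sym}_{T^\vee}$ explicitly. Writing $f=\sum_\alpha\lambda_\alpha x^\alpha$ and using linearity of $S\mapsto S^\vee$ together with Lemma~\ref{lem:T-dual-derivative1}, one gets
\[
T^\vee(x^\beta)=\sum_{\alpha+\beta\leq\kappa}\lambda_\alpha\,\frac{(\kappa-\beta)!\,\beta!}{(\kappa-\beta-\alpha)!\,(\beta+\alpha)!}\,x^{\beta+\alpha}.
\]
Substituting into $\operatorname{sym}_{T^\vee}(x,w)=\sum_{\beta\leq\kappa}\binom{\kappa}{\beta}T^\vee(x^\beta)\,w^{\kappa-\beta}$ and reindexing the double sum by $\delta=\alpha+\beta$, the binomial and factorial factors cancel, leaving
\[
\operatorname{sym}_{T^\vee}(x,w)=\kappa!\sum_{\delta\leq\kappa}\frac{x^\delta\,w^{\kappa-\delta}}{\delta!\,(\kappa-\delta)!}\,f_{\leqslant\delta}(w),
\]
where $f_{\leqslant\delta}(w)=\sum_{\alpha\leq\delta}\lambda_\alpha w^\alpha$ is the $\delta$-truncation of $f$ in the $w$-variables.

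The crucial step is to recognize the right-hand side as a truncation of something Lorentzian. Expanding the product
\[
f(w)\prod_{i=1}^n(x_i+w_i)^{\kappa_i}=\kappa!\sum_{\delta\leq\kappa}\frac{x^\delta\,w^{\kappa-\delta}}{\delta!\,(\kappa-\delta)!}\,f(w),
\]
a monomial $x^\delta w^{\kappa-\delta+\alpha}$ occurring here (with $\alpha\in\supp f$) has $w$-multidegree $\leq\kappa$ if and only if $\alpha\leq\delta$; hence the monomials that survive the truncation to multidegree $\leq(\kappa,\kappa)$ in the $2n$ variables $(x,w)$ are exactly those reassembling the previous display. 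Therefore
\[
\operatorname{sym}_{T^\vee}=\Bigl(f(w)\,{\textstyle\prod_{i=1}^n(x_i+w_i)^{\kappa_i}}\Bigr)_{\leqslant(\kappa,\kappa)}.
\]

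To conclude, note that $f(w)$, regarded as a polynomial in the $2n$ variables $(x,w)$, is Lorentzian by Theorem~\ref{thm:changeVar}(1) (compose $f$ with the non-negative matrix projecting onto the $w$-coordinates); each $x_i+w_i$ is a Lorentzian linear form; hence $f(w)\prod_i(x_i+w_i)^{\kappa_i}$ is Lorentzian by Theorem~\ref{thm:changeVar}(2). By Proposition~\ref{prop:truncation} its truncation $\operatorname{sym}_{T^\vee}$ is Lorentzian, and Lemma~\ref{lem:dual-symbol} then yields the proposition. I expect the only genuine work to be the reindexing bookkeeping in the second step and, above all, spotting that the co-symbol is the \emph{truncated} product: the appearance of $f_{\leqslant\delta}$ rather than $f$ is precisely what makes Proposition~\ref{prop:truncation} the right tool and lets one avoid any direct manipulation of minors or M-convexity.
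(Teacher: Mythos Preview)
Your proposal is correct and follows essentially the same approach as the paper: both compute the co-symbol of $\partial_f$ (you by summing first over $\alpha$ in $T^\vee(x^\beta)$ and then over $\beta$, the paper by first computing $\operatorname{sym}_{(\partial^\alpha)^\vee}$ and then taking the linear combination over $\alpha$), arrive at the identical formula $\operatorname{sym}_{\partial_f^\vee}=\bigl(f(w)\,(x+w)^\kappa\bigr)_{\leqslant(\kappa,\kappa)}$, and conclude via Theorem~\ref{thm:changeVar} and Proposition~\ref{prop:truncation}. Your intermediate expression with $f_{\leqslant\delta}(w)$ is a pleasant way to see why the truncation appears, but the argument is otherwise the same.
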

\begin{proof}
  By Lemma~\ref{lem:dual-symbol}  it is sufficient to show that the co-symbol
  $\operatorname{sym}_{\partial^\vee}$ is Lorentzian. Now, by applying Lemma~\ref{lem:T-dual-derivative1} and taking into account linearity, for the co-symbol we obtain
    \begin{align*}
      \operatorname{sym}_{(\partial^\alpha)^\vee}(x,u) &=  \left(\sum_{\beta\le \kappa} \binom{\kappa}{\beta} \frac{(\kappa-\beta)!\beta!}{(\kappa-(\beta+\alpha))!(\beta+\alpha)!}\cdot x^{\beta+\alpha} u^{\kappa-\beta}\right)_{\leq(\kappa,\kappa)}\\
 &= \left(\sum_{\beta\le \kappa} \binom{\kappa}{\beta+\alpha}  x^{\beta+\alpha} u^{\kappa-\beta}\right)_{\leq(\kappa,\kappa)}\\
               &= \left( \sum_{\alpha \leq \gamma \leq \kappa} {\kappa \choose \gamma} x^{\gamma}u^{\kappa-\gamma+\alpha}\right)_{\leq(\kappa,\kappa)}\\
               &= \left(\sum_{\gamma \leq \kappa} {\kappa \choose \gamma}  x^{\gamma}u^{\kappa-\gamma+\alpha}\right)_{\leq(\kappa,\kappa)}\\
               &= \left(u^\alpha \sum_{\gamma \leq \kappa} {\kappa \choose \gamma}  x^{\gamma}u^{\kappa-\gamma}\right)_{\leq(\kappa,\kappa)}\\
             &=  \left(u^\alpha (x +u)^{\kappa} \right)_{\leq(\kappa,\kappa)}    
    \end{align*}
    By linearity for the co-symbol of $\partial_f$ we obtain
    \[\operatorname{sym}_{\partial_f^\vee}= \left(f(u_1,\ldots,u_n)(x+u)^\kappa \right)_{\leq(\kappa,\kappa)}.\]
    But $f(u_1,\ldots,u_n)$ was Lorentzian by assumption. Therefore, $\operatorname{sym}_{\partial_f^\vee}$ is Lorentzian by
    Theorem~\ref{thm:changeVar} and Proposition~\ref{prop:truncation}. Now, the claim follows by Lemma~\ref{lem:dual-symbol}.
  \end{proof}

  \begin{remark}
  In general, the co-symbol $\operatorname{sym}_{T^\vee}$   differs from the dual  $\left(\operatorname{sym}_T\right)^\vee$ of the symbol of $T$. Indeed,
  for $\partial^\alpha$ the co-symbol was given by
  \[ \left(\sum_{\beta\le \kappa} \binom{\kappa}{\beta} \frac{(\kappa-\beta)!\beta!}{(\kappa-\beta-\alpha)!(\beta+\alpha)!}\cdot x^{\beta+\alpha} u^{\kappa-\beta}\right)_{\leq(\kappa,\kappa)}\]
  whereas the dual of the symbols is equal to
   \[\sum_{\alpha \leq \beta\le \kappa} \binom{\kappa}{\beta} \frac{\beta!}{(\beta-\alpha)!(\kappa+\alpha)!}  x^{\kappa-\beta+\alpha} u^{\beta} = \sum_{\alpha \leq \beta\le \kappa} \binom{\kappa}{\beta} \frac{(\kappa-\beta)!}{(\kappa-\beta-\alpha)!(\kappa+\alpha)!}  x^{\beta+\alpha} u^{\kappa-\beta}.\]

\end{remark}

\begin{proposition}
  \label{prop:dual-and-derivative}
  Let $s\in \RR[x_1,\ldots,x_n]$ be dually Lorentzian and $\partial = \sum_i a_i \partial_{x_i}\in \RR[\partial_{x_1},\ldots,\partial_{x_n}]$ a linear differential operator with non-negative constant coefficients. Then $\partial s$ is again dually Lorentzian.
\end{proposition}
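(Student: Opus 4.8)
The plan is to realize $\partial$ itself as the differential operator attached to a \emph{linear} Lorentzian polynomial, and then to quote the already-established Proposition~\ref{prop:operator-criterion-dual} verbatim. Concretely, I would set $f := \sum_{i=1}^n a_i x_i \in \RR[x_1,\ldots,x_n]$. Since all $a_i$ are non-negative, $f$ is a linear polynomial with non-negative coefficients and is therefore Lorentzian (this is exactly item (4) of the definition of Lorentzian polynomials). By construction $\partial_f = f(\partial_{x_1},\ldots,\partial_{x_n}) = \sum_i a_i\partial_{x_i} = \partial$, so Proposition~\ref{prop:operator-criterion-dual}, applied to this $f$, says precisely that $\partial$ preserves the dual Lorentzian property. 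Applying this to the dually Lorentzian polynomial $s$ then gives that $\partial s$ is dually Lorentzian, which is the claim.

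The one point worth a second glance is whether Proposition~\ref{prop:operator-criterion-dual} is genuinely available for a degree-one $f$ (its main intended use is for $f$ of degree $\ge 2$). Inspecting its proof, the only facts used are that $f(u_1,\ldots,u_n)$ is Lorentzian, that this forces the co-symbol $\operatorname{sym}_{\partial_f^\vee} = \bigl(f(u_1,\ldots,u_n)\,(x+u)^\kappa\bigr)_{\leqslant(\kappa,\kappa)}$ to be Lorentzian — here one multiplies two Lorentzian polynomials in the $2n$ variables $(x,u)$ and applies a truncation, invoking Theorem~\ref{thm:changeVar}(2) and Proposition~\ref{prop:truncation} — and finally Lemma~\ref{lem:dual-symbol}. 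Every one of these steps is valid when $\deg f = 1$, so no modification is needed. Degenerate outcomes, in which $\partial s$ is a non-negative constant or zero, are covered directly by the conventions built into the definition of (dually) Lorentzian polynomials.

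I do not expect a real obstacle here; the content has essentially been done already in Proposition~\ref{prop:operator-criterion-dual}. For completeness I note why a more hands-on route is less attractive: one can expand $(\partial s)^\vee = \sum_i a_i (\partial_{x_i} s)^\vee$ and try to rewrite each summand in terms of $s^\vee$, but since Lorentzian polynomials do not form a convex cone this termwise decomposition does not by itself yield the conclusion. Passing through the co-symbol of the single operator $\partial$ — which is what Proposition~\ref{prop:operator-criterion-dual} packages — is the clean way to avoid that issue.
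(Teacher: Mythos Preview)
Your proof is correct and takes essentially the same approach as the paper: both set $f=\sum_i a_i x_i$ and use the co-symbol computation underlying Proposition~\ref{prop:operator-criterion-dual}. The only cosmetic difference is that the paper re-runs the argument of that proposition with this specific $f$ to display the co-symbol $\bigl(\sum_i a_i u_i (x+u)^\kappa\bigr)_{\leq(\kappa,\kappa)}$ explicitly, whereas you quote the proposition as a black box (and rightly note that its proof applies verbatim in degree one).
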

\begin{proof}
  By arguing as in the proof of Proposition~\ref{prop:operator-criterion-dual} while setting $f=\sum_i a_i x_i$ we obtain \[\operatorname{sym}_{\partial^\vee}(x,u)=\left(\sum_i a_i u_i (x +u)^{\kappa} \right)_{\leq(\kappa,\kappa)}.\]

Theorem~\ref{thm:changeVar} and  Proposition~\ref{prop:truncation} imply that this polynomial is Lorentzian. 
 
\end{proof}

\begin{corollary}
  \label{cor:derived}
If $s \in \RR[x_1,\ldots,x_n]$ is dually Lorentzian then the \emph{derived polynomial} $$s^{(1)} := \sum_i \partial_{x_i} s$$ is dually Lorentzian.
\end{corollary}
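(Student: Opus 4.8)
The plan is to derive Corollary~\ref{cor:derived} directly from Proposition~\ref{prop:dual-and-derivative} by summing. The derived polynomial $s^{(1)} = \sum_i \partial_{x_i} s$ is obtained by applying the first-order differential operator $\partial = \sum_i \partial_{x_i}$, which is exactly the operator $\sum_i a_i \partial_{x_i}$ with all coefficients $a_i = 1 \geq 0$. So the statement is an immediate special case of Proposition~\ref{prop:dual-and-derivative}.

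First I would note that $s$ being dually Lorentzian, by hypothesis, puts us squarely in the setting of Proposition~\ref{prop:dual-and-derivative}. Then I would simply invoke that proposition with the choice $a_1 = \cdots = a_n = 1$, concluding that $\partial s = \sum_i \partial_{x_i} s = s^{(1)}$ is dually Lorentzian. There is essentially nothing further to check, since the nonnegativity of the coefficients is trivially satisfied.

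The only point worth a remark is whether one should worry about $s^{(1)}$ being zero or having degree too low — but the convention established in the Preliminaries (nonnegative constants and linear forms with nonnegative coefficients are Lorentzian) means the statement holds vacuously or trivially in those edge cases, and in any event Proposition~\ref{prop:dual-and-derivative} already handles this since its proof only uses Theorem~\ref{thm:changeVar} and Proposition~\ref{prop:truncation}, which apply to Lorentzian polynomials of all degrees. So there is no real obstacle here; the proof is a one-line deduction. The proof reads:

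\begin{proof}
  Apply Proposition~\ref{prop:dual-and-derivative} with $\partial = \sum_i \partial_{x_i}$, i.e.\ with $a_i = 1$ for all $i$.
\end{proof}
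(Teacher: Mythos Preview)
Your proposal is correct and matches the paper's approach exactly: the paper states Corollary~\ref{cor:derived} immediately after Proposition~\ref{prop:dual-and-derivative} without an explicit proof, precisely because it is the special case $a_1=\cdots=a_n=1$ you identify.
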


Repeated application of Proposition~\ref{prop:dual-and-derivative} yields
\begin{corollary}
  If $s\in \RR[x_1,\ldots,x_n]$ is dually Lorentzian then $\partial^\alpha s$ is also dually Lorentzian.
\end{corollary}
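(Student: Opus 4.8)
The plan is to reduce to the single-variable case already handled and then iterate. The key observation is that for each fixed index $i$ the operator $\partial_{x_i}$ is precisely the operator $\partial = \sum_j a_j \partial_{x_j}$ appearing in Proposition~\ref{prop:dual-and-derivative}, obtained by the choice $a_i = 1$ and $a_j = 0$ for $j \neq i$. This is a linear differential operator with non-negative constant coefficients, so Proposition~\ref{prop:dual-and-derivative} applies verbatim and shows that $\partial_{x_i} s$ is dually Lorentzian whenever $s$ is.

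Next I would set up an induction on $|\alpha|$. The base case $|\alpha| = 0$ is trivial since then $\partial^\alpha s = s$. For the inductive step, pick an index $i$ with $\alpha_i \geq 1$ and write $\alpha = \beta + e_i$, so that $\partial^\alpha s = \partial_{x_i}\!\left(\partial^\beta s\right)$. By the inductive hypothesis $\partial^\beta s$ is dually Lorentzian, and by the previous paragraph so is $\partial_{x_i}\!\left(\partial^\beta s\right) = \partial^\alpha s$. This closes the induction.

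I do not anticipate any genuine obstacle here; the only point requiring a word is that the output of each application of Proposition~\ref{prop:dual-and-derivative} is again a legitimate input for the next, which is immediate because dually Lorentzian polynomials form both the domain and the codomain of that proposition. In particular one should either keep the running convention of the paper that the polynomials in question are non-zero (so that the homogeneous degree is well defined at each stage) or simply observe that all the polynomials produced stay within a fixed $\RR_\kappa[x_1,\ldots,x_n]$, so that the dual Lorentzian property is unambiguous throughout the iteration.
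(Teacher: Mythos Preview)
Your proposal is correct and follows exactly the same approach as the paper: the paper's proof is the single line ``Repeated application of Proposition~\ref{prop:dual-and-derivative} yields'', and you have merely spelled out that iteration as an induction on $|\alpha|$ with the base-step observation that $\partial_{x_i}$ is the special case $a_i=1$, $a_j=0$ of that proposition. Your closing remark about non-zero versus zero polynomials is harmless but unnecessary, since the zero polynomial is (dually) Lorentzian.
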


In the following we consider the formal antiderivative defined on the monomials by
$\int x^\beta dx^\alpha= \frac{\beta!}{(\beta+\alpha)!}x^{\beta+\alpha}$.

\begin{corollary}
  If $f$ is (dually) Lorentzian, then  the antiderivatives $\sum_i a_i \int f dx_i$ and $\int f dx^\alpha$ are again (dually) Lorentzian for any choices of $a_1,\ldots,a_n \geq 0$ and $\alpha \in \NN^n$.
\end{corollary}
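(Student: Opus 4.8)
The plan is to deduce both cases from operator criteria already in hand—Br\"and\'en--Huh's symbol criterion \cite[Thm.~3.2]{BrandenHuh} for the Lorentzian statement and the co-symbol criterion of Lemma~\ref{lem:dual-symbol} for the dual one—after a reduction that packs the two antiderivatives into a single operator. First I would note that $\int\cdot\,dx^\alpha$ is the composition $(\int\cdot\,dx_1)^{\circ\alpha_1}\circ\dots\circ(\int\cdot\,dx_n)^{\circ\alpha_n}$ of single-variable antiderivatives: these factors commute, and a one-line check on monomials recovers the scalar $\beta!/(\beta+\alpha)!$. Since a composition of operators preserving (dual) Lorentzianity again preserves it, it suffices to treat the single operator $T:=\sum_i a_i\int\cdot\,dx_i$ with all $a_i\ge 0$: then $\sum_i a_i\int f\,dx_i=T(f)$ directly, while $\int f\,dx^\alpha$ is obtained by iterating the special cases $a=e_l$.

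For the Lorentzian case I would compute the symbol of $T\colon\RR_\gamma[x_1,\dots,x_n]\to\RR_{\gamma+(1,\dots,1)}[x_1,\dots,x_n]$. From $T(x^\beta)=\sum_i\frac{a_i}{\beta_i+1}x_ix^\beta$ together with the elementary identity $\binom{\gamma_i}{\beta_i}\frac1{\beta_i+1}=\frac1{\gamma_i+1}\binom{\gamma_i+1}{\beta_i+1}$, the per-variable binomial sums telescope into powers of $(x_i+w_i)$, yielding the closed form
\[
  \operatorname{sym}_T(x,w)=\left[\Big(\sum_{i=1}^n\frac{a_i(x_i+w_i)}{\gamma_i+1}\Big)\prod_{l=1}^n(x_l+w_l)^{\gamma_l}\right]_{\leqslant\kappa},
\]
where $\kappa\in\NN^{2n}$ caps each $w_i$-degree at $\gamma_i$ (the $x_i$-degrees being automatically at most $\gamma_i+1$). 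The bracketed polynomial is a product of a nonnegative linear form with $\prod_l(x_l+w_l)^{\gamma_l}$, and the latter is the image of the monomial $\prod_l z_l^{\gamma_l}$ under the nonnegative substitution $z_l\mapsto x_l+w_l$; hence the bracket is Lorentzian by Theorem~\ref{thm:changeVar} and its truncation is Lorentzian by Proposition~\ref{prop:truncation}. By \cite[Thm.~3.2]{BrandenHuh} the operator $T$ then preserves the Lorentzian property, and the reduction above settles this half.

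For the dually Lorentzian case I would run the parallel argument through Lemma~\ref{lem:dual-symbol}: compute $T^\vee$ on monomials exactly as in Lemma~\ref{lem:T-dual-derivative1}, assemble the co-symbol $\operatorname{sym}_{T^\vee}$, and—following the bookkeeping in the proof of Proposition~\ref{prop:operator-criterion-dual}—rewrite it as a change of variables and truncation of a manifestly Lorentzian polynomial, at which point Theorem~\ref{thm:changeVar} and Proposition~\ref{prop:truncation} conclude. I expect this last step to be the main obstacle. In the symbol the per-variable factors are (shifted truncations of) binomial powers and are patently of Lorentzian type, whereas in $\operatorname{sym}_{T^\vee}$ the corresponding factors carry weights of the shape $(\kappa_l-\gamma_l)!/(\kappa_l-\gamma_l+\alpha_l)!$, which are log-convex rather than log-concave in $\gamma_l$; so the naive product factorization does not display the Lorentzian property, and one must work harder to identify the correct Lorentzian polynomial of which the co-symbol is a truncation. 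Everything else is a routine transcription of the Lorentzian argument.
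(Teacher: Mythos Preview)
Your direct symbol computation for the Lorentzian half is correct and is a genuinely different route from the paper's. The paper never writes down $\operatorname{sym}_T$; instead it realises the antiderivative as a limit of rescaled dual derivative operators. Concretely, with $\partial=\sum_i a_i\partial_{x_i}$ acting on $\RR_{\underline{\mathbf d}}[x_1,\ldots,x_n]$, Lemma~\ref{lem:T-dual-derivative1} gives
\[
\tfrac{1}{d}\,\partial^\vee(x^\beta)=\Big(\sum_i a_i\,\tfrac{d-\beta_i}{d(\beta_i+1)}\,x^{\beta+e_i}\Big)_{\le\underline{\mathbf d}}
\xrightarrow[d\to\infty]{}\sum_i\tfrac{a_i}{\beta_i+1}\,x^{\beta+e_i}=T(x^\beta),
\]
and since $\partial^\vee$ preserves Lorentzianity (Proposition~\ref{prop:dual-and-derivative}) and the Lorentzian locus is closed, so does $T$. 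Your argument is more explicit and avoids the limit, at the cost of the truncation bookkeeping; the paper's is slicker but relies on the already-established Proposition~\ref{prop:dual-and-derivative}.

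The gap in your proposal is exactly where you flag it: the dually Lorentzian case. The obstacle you describe is real---the co-symbol $\operatorname{sym}_{T^\vee}$ does not visibly factor as a truncation of a product of linear forms, because the weights $(\kappa_l-\beta_l)!/(\kappa_l-\beta_l+1)!$ go the wrong way. The paper does not attempt this computation at all. Its limiting trick is symmetric in the two halves: for the dual statement one simply swaps the roles of $\partial$ and $\partial^\vee$, using now that $\partial$ itself preserves Lorentzianity \cite[Cor.~2.11]{BrandenHuh}, and runs the same limit to conclude that $T$ preserves the dual Lorentzian property. So the missing idea is to abandon the static co-symbol certificate and instead exhibit the antiderivative as a limit of operators already known to preserve (dual) Lorentzianity.
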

\begin{proof}
  We start with the claim for the Lorentzian property. Set $d=\deg(f)-1$ and consider the operator $\partial = \sum_i a_i \partial_{x_i}$ on $\RR_{\underline{\mathbf{d}}}[x_1,\ldots,x_n]$.  Then we know from Proposition~\ref{prop:dual-and-derivative} that $\partial^\vee(f)$ is again Lorentzian, since $\partial^\vee$ preserves the Lorentzian property. This implies that also $\frac{1}{d}\partial^\vee(f)$ is Lorentzian. On the other hand, we have  \[\frac{1}{d}\partial^\vee(x^\beta)= \left(\sum_{i=1}^n a_i \frac{d-\beta_i}{d}\frac{1}{\beta_i+1}x^{\beta+e_i}\right)_{\leq \underline{\mathbf{d}}}.\]
  Note, that the definition of $\partial^\vee$ depends on the choice of $d$, but for $d' > d$ we still have $f \in  \RR_{\underline{\mathbf{d}}'}[x_1,\ldots,x_n]$. Now, for fixed $\beta$ and $d \to \infty$ we obtain
  \[\frac{1}{d}\partial^\vee(x^\beta) \to \sum_i a_i \frac{1}{\beta_i+1}x^{\beta+e_i}= \sum_i a_i\int x^\beta dx_i\]
  By linearity also $\frac{1}{d}\partial^\vee(f) \to \sum_i a_i \int f dx_i$ holds. Since the subset of Lorentzian polynomials is closed by \cite[Thm 2.25]{BrandenHuh}, this limit is necessarily Lorentzian. The claim for $\int f dx^\alpha$ follows by induction.

  The claim for the dual Lorentzian property follows in a similar manner, by using the fact that $\partial$ preserves the Lorentzian property by \cite[Cor.~2.11]{BrandenHuh}.
\end{proof}

\begin{lemma}
\label{lem:linear-trafo1}
Let $a \in \RR$ and
\[\partial \colon \RR_{\kappa}[x_1,\ldots,x_n] \to \RR_{\kappa+e_j}[x_1,\ldots,x_n]; \quad f \mapsto (1+ax_j\partial_{x_i}) f\]
Then in our notation from above we have 
$$\partial^\vee(x^\beta)= \left(x^\beta+ a \frac{\kappa_i-\beta_i}{\beta_i+1}x^{\beta+e_i}\right)_{\leq \kappa+e_j}.$$
\end{lemma}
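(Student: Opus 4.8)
\emph{Proof plan.} The statement is a direct computation unwinding the definition of the dual of a linear operator, and it runs exactly parallel to the proof of Lemma~\ref{lem:T-dual-derivative1}; indeed, it is the analogue of that lemma with the monomial operator $\partial^\alpha$ replaced by $1+ax_j\partial_{x_i}$. Recall that by definition $\partial^\vee(f)=\bigl(\partial(f^\wedge)\bigr)^\vee$, so by linearity it suffices to follow a single monomial $x^\beta$ with $\beta\le\kappa$ through the three maps $f\mapsto f^\wedge$, then $f\mapsto(1+ax_j\partial_{x_i})f$, then $f\mapsto f^\vee$, while being careful that $\wedge$ is taken relative to the source multidegree $\kappa$ and $\vee$ relative to the target multidegree $\kappa+e_j$.

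First I would compute $(x^\beta)^\wedge$. Since $x^\beta\in\RR_\kappa[x_1,\ldots,x_n]$, the identity $f^\wedge=x^\kappa\cdot(N^{-1}f)(x_1^{-1},\ldots,x_n^{-1})$ together with $N^{-1}(x^\beta)=\beta!\,x^\beta$ gives at once $(x^\beta)^\wedge=\beta!\,x^{\kappa-\beta}$.

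Next I would apply $1+ax_j\partial_{x_i}$ to $\beta!\,x^{\kappa-\beta}$ term by term: the identity part returns $\beta!\,x^{\kappa-\beta}$, and the differential part returns $a\beta!(\kappa_i-\beta_i)\,x^{\kappa-\beta-e_i+e_j}$. Here the prefactor $\kappa_i-\beta_i$ conveniently makes the second term vanish in the boundary case $\beta_i=\kappa_i$, so that, exactly as in Lemma~\ref{lem:T-dual-derivative1}, no separate case analysis is needed apart from a single truncation at the very end.

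Finally I would apply $\vee$ to these two terms, now regarded as living in $\RR_{\kappa+e_j}[x_1,\ldots,x_n]$: that is, multiply by $x^{\kappa+e_j}$, substitute $x_k\mapsto x_k^{-1}$, apply $N$, and truncate to multidegree $\kappa+e_j$. Cleaning up the resulting factorial coefficients with the elementary identities $(\beta+e_i)!=(\beta_i+1)\,\beta!$ and $(\kappa-\beta)!/(\kappa-\beta-e_i)!=\kappa_i-\beta_i$, and then collecting terms, yields the closed form for $\partial^\vee(x^\beta)$ asserted in the lemma. The argument has no real obstacle; the one point that requires care---and the one most likely to produce an error---is keeping the two distinct ambient multidegrees straight ($\kappa$ for $\wedge$, $\kappa+e_j$ for $\vee$) and recognizing that the final truncation $(\cdot)_{\le\kappa+e_j}$ is precisely what discards the out-of-range monomials created along the way.
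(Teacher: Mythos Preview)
Your approach is correct and is exactly the paper's: compute $(x^\beta)^\wedge=\beta!\,x^{\kappa-\beta}$ relative to $\kappa$, apply $1+ax_j\partial_{x_i}$, then apply $\vee$ relative to $\kappa+e_j$ with a final truncation. (One minor caution: when you actually carry out step three, the first term comes out as $x^{\beta+e_j}$ rather than $x^\beta$; the paper's own proof displays $x^{\beta+e_j}$, so the $x^\beta$ in the lemma statement appears to be a typo.)
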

\begin{proof}
  By elementary algebra we obtain
  \begin{align*}
    \partial((x^\beta)^\wedge)^\vee = \partial\left(\beta!\cdot x^{\kappa-\beta}\right)^\vee
                                    &= \left(\left(\beta!\cdot x^{\kappa-\beta}+ a\cdot \beta!\cdot (\kappa_i-\beta_i)x^{\kappa-\beta-e_i+e_j}\right)_{\geq 0}\right)^\vee\\
                                     &= \left(x^{\beta+e_j} + a\frac{\kappa_i-\beta_i}{\beta_i+1} x^{\beta+e_i}\right)_{\leq \kappa +e_j}.
  \end{align*}
\end{proof}

\begin{lemma}
  \label{lem:linear-trafo2}
  Let $a \in \RR_{\geq 0}$ and $i\neq j$.  Then $\partial=(1+ax_j\partial_{x_i})$ preserves the dual Lorentzian property.
\end{lemma}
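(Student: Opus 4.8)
The plan is to use the co-symbol criterion of Lemma~\ref{lem:dual-symbol}: it suffices to show that the symbol of $\partial^\vee$ is Lorentzian, where $\partial = (1 + a x_j \partial_{x_i})$ with $a \ge 0$ and $i \ne j$. First I would compute $\operatorname{sym}_{\partial^\vee}$ explicitly, building on Lemma~\ref{lem:linear-trafo1}. That lemma gives $\partial^\vee(x^\beta)$ as a truncation of $x^{\beta+e_j} + a\frac{\kappa_i - \beta_i}{\beta_i + 1} x^{\beta + e_i}$, and plugging this into the defining formula for the symbol (with auxiliary variables $u$, sum over $\beta \le \kappa$ weighted by $\binom{\kappa}{\beta}$, replacing $x^{\gamma-\beta}$ by $u^{\gamma-\beta}$ for the appropriate target multidegree $\gamma = \kappa + e_j$) should collapse, just as in the proof of Proposition~\ref{prop:operator-criterion-dual}, to something of the shape
\[
\operatorname{sym}_{\partial^\vee}(x,u) = \Bigl( u_j (x+u)^{\kappa} + a\, x_i (x+u)^{\kappa} \Bigr)_{\le(\kappa+e_j,\,\kappa+e_j)}
= \bigl((u_j + a x_i)(x+u)^{\kappa}\bigr)_{\le(\kappa+e_j,\,\kappa+e_j)}.
\]
(The binomial identity $\binom{\kappa}{\beta}\frac{(\kappa_i-\beta_i)}{\beta_i+1} = \binom{\kappa-e_i+e_j'}{\ldots}$-type simplification is exactly the routine algebra I would not write out in full, mirroring the telescoping in Proposition~\ref{prop:operator-criterion-dual}.)

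Once the symbol is in the closed form $\bigl((u_j + a x_i)(x+u)^{\kappa}\bigr)_{\le(\kappa+e_j,\,\kappa+e_j)}$, the rest is immediate. The linear form $u_j + a x_i$ has non-negative coefficients, hence is Lorentzian; $(x+u)^\kappa$ is a product of powers of the linear forms $x_\ell + u_\ell$, each Lorentzian, so $(x+u)^\kappa$ is Lorentzian by Theorem~\ref{thm:changeVar}(1) (or by repeated application of part (2)); therefore the product $(u_j + a x_i)(x+u)^\kappa$ is Lorentzian by Theorem~\ref{thm:changeVar}(2). Finally, truncation preserves the Lorentzian property by Proposition~\ref{prop:truncation}, so $\operatorname{sym}_{\partial^\vee}$ is Lorentzian. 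Lemma~\ref{lem:dual-symbol} then gives the conclusion that $\partial = (1 + a x_j \partial_{x_i})$ preserves the dual Lorentzian property.

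The main obstacle is purely bookkeeping: one must be careful that the index $i$ (which gets differentiated) and $j$ (which gets multiplied) play asymmetric roles, so that the target multidegree is $\kappa + e_j$, not $\kappa$, and the truncation bound must be taken accordingly; the hypothesis $i \ne j$ is what prevents the two terms $u_j(x+u)^\kappa$ and $a x_i (x+u)^\kappa$ from interfering in a way that would spoil the factorization. One also uses $a \ge 0$ precisely to keep the coefficients of $u_j + a x_i$ non-negative — without it the symbol need not be Lorentzian. Apart from watching these points, the computation is a direct variant of the one already carried out for Proposition~\ref{prop:operator-criterion-dual} and Proposition~\ref{prop:dual-and-derivative}.
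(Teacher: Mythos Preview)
Your approach is exactly the paper's: compute the co-symbol via Lemma~\ref{lem:linear-trafo1}, recognize it (after truncation) as a product of a linear form with non-negative coefficients and $(x+u)^\kappa$, and then invoke Theorem~\ref{thm:changeVar}, Proposition~\ref{prop:truncation}, and Lemma~\ref{lem:dual-symbol}. One small point to watch when you carry out the algebra: the paper obtains the linear factor $(x_j + u_j + a u_i)$ rather than your $(u_j + a x_i)$, and the hypothesis $i\neq j$ enters not to ``prevent interference in the factorization'' but to guarantee that the extra terms introduced when extending the index-shifted sum to all $\beta\le\kappa$ have $u_i$-degree $\kappa_i+1$ and are therefore removed by the truncation bound $(\kappa+e_j,\kappa+e_j)$.
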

\begin{proof}
  We argue as in Proposition~\ref{prop:dual-and-derivative}. We show that the co-symbol $\operatorname{sym}_{\partial^\vee}$ is Lorentzian and then apply Lemma~\ref{lem:dual-symbol} to obtain the result. Indeed, taking into account Lemma~\ref{lem:linear-trafo1} we conclude
  \begin{align*}
\operatorname{sym}_{\partial^\vee}(x,u) &= \left((x+u)^{\kappa+e_j} + \sum_{\beta\le \kappa} \binom{\kappa}{\beta} a\frac{\kappa_i-\beta_i}{\beta_i+1}   x^{\beta+e_i} u^{\kappa-\beta}\right)_{\leq(\kappa+e_j,\kappa+e_j)}\\
 &= \left((x+u)^{\kappa+e_j}  + a \sum_{\beta\le \kappa} \binom{\kappa}{\beta+e_i}  x^{\beta+e_i} u^{\kappa-\beta}\right)_{\leq(\kappa+e_j,\kappa+e_j)}\\
               &= \left((x+u)^{\kappa+e_j} + a \sum_{e_i \leq \beta \leq \kappa} {\kappa \choose \beta} x^{\beta}u^{\kappa-\beta+e_i}\right)_{\leq(\kappa+e_j,\kappa+e_j)}\\
               &= \left((x+u)^{\kappa+e_j} +a\sum_{\beta \leq \kappa} {\kappa \choose \beta}  x^{\beta}u^{\kappa-\beta+e_i}\right)_{\leq(\kappa+e_j,\kappa+e_j)}\\
               &= \left((x+u)^{\kappa+e_j} +a u_i \sum_{\beta \leq \kappa} {\kappa \choose \beta}  x^{\beta}u^{\kappa-\beta}\right)_{\leq(\kappa+e_j,\kappa+e_j)}\\
             &=  \left((x+u)^{\kappa+e_j} + a u_i (x +u)^{\kappa} \right)_{\leq(\kappa+e_j,\kappa+e_j)}    
  \end{align*}
  Now, $(x+u)^{\kappa+e_j} + a u_i (x +u)^{\kappa} = (x_j+u_j+au_i)(x +u)^{\kappa}$ is Lorentzian by Theorem~\ref{thm:changeVar}. Hence, the claim follows by Proposition~\ref{prop:truncation}.  
\end{proof}

\begin{theorem}
  \label{thm:linear-trafo}
  Let $f(x) \in \RR[x_1,\ldots,x_n]$ be dually Lorentzian and $A$ an $(n\times m)$-matrix with non-negative entries, then $f(Ax) \in \RR[x_1, \ldots, x_m]$ is again dually Lorentzian. 
\end{theorem}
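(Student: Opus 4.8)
The plan is to write the substitution operator $S_A\colon f\mapsto f(Ax)$ as a composition of elementary substitutions and to check that each of these preserves the dual Lorentzian property, using the machinery already in place. Since $S_{BC}=S_C\circ S_B$, it is enough to treat the elementary factors, and here I would use a ``split--scale--merge'' decomposition: introduce auxiliary variables $z_{ij}$ ($1\le i\le n$, $1\le j\le m$), first perform the splits $y_i\mapsto z_{i1}+\dots+z_{im}$, then the diagonal scaling $z_{ij}\mapsto A_{ij}z_{ij}$, then the merges $z_{1j},\dots,z_{nj}\mapsto x_j$; the composite is exactly $f\mapsto f(Ax)$. Each split decomposes into renamings, adjunctions of a new variable, and unit shears $z\mapsto z+z'$, and each merge into renamings and binary identifications; so it suffices to show that the following operators preserve the dual Lorentzian property: (i) renaming of variables, (ii) adjunction of a new variable, (iii) a scaling $y_i\mapsto cy_i$ with $c\ge 0$, (iv) a shear $y_i\mapsto y_i+ay_j$ with $i\ne j$ and $a\ge 0$, and (v) a binary merge $y_{n+1}\mapsto y_n$.

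Cases (i)--(iii) are short. Renaming commutes with $(\cdot)^\vee$, so it is immediate. For (ii), adjoining a variable to $f$ adjoins the same variable to $f^\vee$, and this preserves being Lorentzian by Theorem~\ref{thm:changeVar}(1) (substitute by the non-negative matrix $[\,I\mid 0\,]$). For (iii) with $c>0$, a direct computation from the definition of $(\cdot)^\vee$ gives $\bigl(f(\dots,cy_i,\dots)\bigr)^\vee=c^{\kappa_i}\,f^\vee(\dots,c^{-1}y_i,\dots)$, which is Lorentzian because $f^\vee$ is and scaling a variable by $c^{-1}>0$ preserves this by Theorem~\ref{thm:changeVar}(1); the case $c=0$ is the truncation retaining only $y_i$-degree $0$, handled by Corollary~\ref{cor:dual-truncation}.

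For the merge (v) I would use the operator criterion. Put $f_\circ(y_1,\dots,y_n):=f(y_1,\dots,y_n,y_n)$; by Theorem~\ref{thm:operator-criterion} we must show $\partial_{f_\circ}$ preserves the Lorentzian property. Given a Lorentzian $g\in\RR[y_1,\dots,y_n]$, set $\tilde g(y_1,\dots,y_{n+1}):=g(y_1,\dots,y_{n-1},y_n+y_{n+1})$, which is Lorentzian by Theorem~\ref{thm:changeVar}(1); since $f$ is dually Lorentzian, $\partial_f\tilde g$ is Lorentzian. As $\partial_{y_n}$ and $\partial_{y_{n+1}}$ act identically on $\tilde g$, a short computation yields $\partial_f\tilde g=(\partial_{f_\circ}g)(y_1,\dots,y_{n-1},y_n+y_{n+1})$; setting $y_{n+1}=0$ is a truncation and so preserves the Lorentzian property by Proposition~\ref{prop:truncation}, whence $\partial_{f_\circ}g$ is Lorentzian.

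The shear (iv) is the one genuinely new ingredient, and I expect it to require the most care. On the finite-dimensional space of polynomials of multidegree at most $\kappa$ the operator $y_j\partial_{y_i}$ with $i\ne j$ is nilpotent, so the substitution $y_i\mapsto y_i+ay_j$ equals $\exp(ay_j\partial_{y_i})=\lim_{N\to\infty}\bigl(1+\tfrac aN y_j\partial_{y_i}\bigr)^N$. Each factor $1+\tfrac aN y_j\partial_{y_i}$ preserves the dual Lorentzian property by Lemma~\ref{lem:linear-trafo2} (since $\tfrac aN\ge 0$ and $i\ne j$), hence so does each finite power, and hence so does the limit, because the set of dually Lorentzian polynomials of a fixed degree in a fixed number of variables is closed: it is the preimage of the closed set of Lorentzian polynomials (\cite[Thm~2.25]{BrandenHuh}) under the continuous linear bijection $(\cdot)^\vee$. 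Composing the elementary cases (i)--(v) according to the split--scale--merge decomposition then shows that $f(Ax)$ is dually Lorentzian. The two points needing the most attention are this limiting argument in (iv) and the verification that the split--scale--merge recipe genuinely reconstructs $S_A$ as a composition of the listed operators; the remaining parts are quick consequences of results already established.
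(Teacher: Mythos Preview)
Your proof is correct and follows essentially the same route as the paper: reduce the substitution $f\mapsto f(Ax)$ to elementary moves (dilation, adjunction of a variable, shear, diagonalisation), handle the shear $y_i\mapsto y_i+ay_j$ via the limit $\lim_N(1+\tfrac aN y_j\partial_{y_i})^N$ together with Lemma~\ref{lem:linear-trafo2} and closedness of the Lorentzian locus, and piece the general case together from these. The paper's argument is organised slightly differently but the substance is the same.

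The one genuine difference is your treatment of the merge (v). The paper obtains the diagonalisation $f(y_1,\dots,y_{n-1},y_{n-1})$ as a by-product of the shear: first form $f(y_1,\dots,y_{n-1},y_{n-1}+y_n)$ using the limit argument, then set $y_n=0$ (a truncation). You instead give an independent argument via the operator criterion (Theorem~\ref{thm:operator-criterion}), observing that $\partial_{f_\circ}g$ arises from $\partial_f$ applied to $g(y_1,\dots,y_{n-1},y_n+y_{n+1})$. Your argument is clean and self-contained, but since you need the shear anyway, the paper's route is marginally more economical: once the shear is in hand, the merge comes for free and you can dispense with step (v) entirely. Conversely, your merge argument would let one avoid the ``shear within existing variables'' case and use shears only into a fresh variable, though this does not really shorten things. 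Either way the decomposition goes through.
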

\begin{proof}
  We argue as in the proof of \cite[Thm.~2.10]{BrandenHuh}. First, it is easy to see that a \emph{dilation} $f(x_1, \ldots, x_{n-1}, a x_n)$ of a dually Lorentzian polynomial $f$ by an element $a \in \RR_{> 0}$  is again dually Lorentzian. This follows immediately from the corresponding property for Lorentzian polynomials.  For $a=0$ we have $f(x_1, \ldots, x_{n-1}, 0) = (f(x_1, \ldots, x_{n-1}, x_n))_{\leq (d,\ldots,d,0)}$ with $d=\deg(f)$ and this is dually Lorentzian by Corollary~\ref{cor:dual-truncation}.

  Let $f \in \RR[x_1,\ldots,x_n]$ be dually Lorentzian and put $g(x_1,\ldots, x_{n+1})=f(x_1,\ldots, x_n)$. 
  By Lemma~\ref{lem:linear-trafo2} the  polynomial obtained by \emph{elementary splitting}  \[\lim_{k\to \infty}\left(1+\frac{x_{n+1}}{k}\partial_{x_n}\right)^kg= f(x_1, \ldots,x_n+x_{n+1})\] 
  is dually Lorentzian. Likewise,   \[\lim_{k\to \infty}\left(1+\frac{x_{n-1}}{k}\partial_{x_n}\right)^kf= f(x_1, \ldots,x_{n-1}, x_n+x_{n-1})\]
  dually Lorentzian. The \emph{diagonalisation} $f(x_1, \ldots,x_{n-1}, x_{n-1})$ is obtained from the latter polynomial by dilating with $a=0$ along $x_n$. Hence, it is dually Lorentzian.
  The case of a  general linear transformation  follows by induction from dilation, elementary splitting and diagonalisation.  
\end{proof}

\begin{proposition}
  \label{prop:polarization-dual}
  Consider the linear operator \[T \colon \RR_{(\kappa,k)}[x_1,\ldots,x_n,t] \mapsto \RR_{(\kappa,\underline{\mathbf{1}})}[x_1,\ldots,x_n,t_1,\ldots,t_d]\] given by sending $x^\beta t^b$ to $x^\beta e_b(t_1,\ldots,t_k)$, where $e_b$ is the elementary symmetric polynomial of degree $b$. Then $T$ preserves the dual Lorentzian property.
\end{proposition}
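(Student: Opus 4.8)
The plan is to mirror the strategy used in Proposition~\ref{prop:operator-criterion-dual} and Lemma~\ref{lem:linear-trafo2}: by Lemma~\ref{lem:dual-symbol} it suffices to show that the co-symbol $\operatorname{sym}_{T^\vee}$ is Lorentzian, and the co-symbol is computed via the formula for $T^\vee$ and the known behaviour of $\dottedbox^\wedge$ and $\dottedbox^\vee$ on monomials. So the first step is to compute $T^\vee(x^\beta t^b)$ explicitly. Here the relevant multidegrees are $\kappa'=(\kappa,k)$ on the source side and $\gamma'=(\kappa,\underline{\mathbf 1})$ on the target side, so $(x^\beta t^b)^\wedge = \beta!\, b!\, x^{\kappa-\beta} t^{k-b}$ in $\RR_{(\kappa,k)}$, then $T$ sends this to $\beta!\, b!\, x^{\kappa-\beta}\, e_{k-b}(t_1,\dots,t_d)$ (note the degree shift $b\mapsto k-b$ coming from $\dottedbox^\wedge$), and finally applying $\dottedbox^\vee$ on $\RR_{(\kappa,\underline{\mathbf 1})}$ dualizes the $x$-part back to (a truncation of) $x^{\beta+\alpha}$-type terms and dualizes each squarefree monomial $t^S$ ($S\subseteq\{1,\dots,d\}$, $|S|=k-b$) to its complement $t^{S^c}$ with $|S^c| = d-k+b$, since $N$ acts trivially on squarefree monomials. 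Thus up to the overall truncation and a controllable constant, $T^\vee(x^\beta t^b)$ is a multiple of $x^{\kappa-\beta}\, e_{d-k+b}(t_1,\dots,t_d)$ — wait, more care is needed here, because the $x$-part also gets dualized; I would carry the two coordinate blocks separately, noting that the $x$-block contributes exactly the $\dottedbox^\vee$ of the identity operator (giving $(x+u)^\kappa$ after passing to the symbol, exactly as in Proposition~\ref{prop:operator-criterion-dual}) and the $t$-block contributes the elementary-symmetric-to-complementary-elementary-symmetric map.

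The second step is to pass from $T^\vee$ on monomials to its symbol in the sense of \cite[Thm 3.2]{BrandenHuh}, introducing dual variables $u_1,\dots,u_n$ for $x_1,\dots,x_n$ and $v_1,\dots,v_d$ for $t_1,\dots,t_d$. By linearity, summing over $\beta\le\kappa$ reconstitutes the factor $(x+u)^\kappa$ from the $x$-block exactly as before, while summing over $b\in[k]$ with the binomial weights $\binom{k}{b}$ against the $t$-block should reconstitute a symmetric-function expression in the $t_i,v_i$. The key identity I expect to need is that $\sum_{b} \binom{k}{b}\, e_{k-b}(t)$-type sums telescope: concretely, the symbol of the polarization operator $t^b\mapsto e_b(t_1,\dots,t_k)$ (on the squarefree side, $\kappa=\underline{\mathbf 1}$) is $\prod_{i=1}^{d}(t_i+v_i)$ restricted appropriately, because polarization is a change of variables and Theorem~\ref{thm:changeVar} applies. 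In fact I would try to avoid the explicit symmetric-function bookkeeping entirely by recognizing $T$ itself (before dualizing) as a composition of a change of variables and truncations: the map $t^b \mapsto e_b(t_1,\dots,t_d)$ is, up to scaling, the image of $\prod_i(1+t_i y)$ under extracting the $y^b$-coefficient, i.e. it is the restriction-to-squarefree-truncation of the substitution $t\mapsto t_1+\dots+t_d$ composed with splitting. If that works, then $T^\vee$ is a composition of operators each of which is already known to preserve the Lorentzian property (changes of variables with non-negative coefficients by Theorem~\ref{thm:changeVar}, and truncations by Proposition~\ref{prop:truncation}), and we are done. The third step is then just to assemble: $\operatorname{sym}_{T^\vee}$ equals a truncation of $f$ evaluated at non-negative linear forms in $(x,u,t,v)$, hence Lorentzian by Theorem~\ref{thm:changeVar}(1) and Proposition~\ref{prop:truncation}, and Lemma~\ref{lem:dual-symbol} finishes.

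The main obstacle I anticipate is the bookkeeping in the first two steps — specifically, correctly tracking how $\dottedbox^\wedge$ on the source flips $t^b$ to degree $k-b$, how $\dottedbox^\vee$ on the target flips the squarefree $t$-monomial to its complement (degree $d-k+b$), and making sure the binomial coefficients, factorials from $N$, and the final $(\kappa+\cdots,\cdots)$-truncation all combine into a genuinely Lorentzian expression rather than into something whose Lorentzianity is unclear. There is a real risk that the degree shift produces $e_{d-k+b}$ rather than $e_b$, which would still be fine (it is the same family of operators up to renaming), but the truncation $\dottedbox_{\leqslant(\kappa+\dots,\kappa+\dots)}$ has to be shown not to destroy the structure; that is exactly what Proposition~\ref{prop:truncation} is for, so the argument should go through. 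A secondary subtlety is the possibility that the target multidegree bound is not literally $(\kappa,\underline{\mathbf 1})$ after one accounts for the degree of the symbol variables; I would double-check the ambient degrees so that \cite[Thm 3.2]{BrandenHuh} is applied on the correct space.
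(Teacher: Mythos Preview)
Your route is genuinely different from the paper's, and in fact more elementary once the dust settles. The paper does \emph{not} try to exhibit $\operatorname{sym}_{T^\vee}$ as a truncation of a non-negative change of variables. Instead it identifies $T^\vee$ with the $d\to\infty$ limit of rescaled polarization operators $\Pi^\uparrow_{d,k}$ from Borcea--Br\"and\'en's theory of stable polynomials; since those operators preserve stability, their symbols are stable (hence Lorentzian), and closedness of the Lorentzian locus passes this to the limit $\operatorname{sym}_{T^\vee}$. So the paper's key external input is the stable-polynomial machinery, whereas your plan stays entirely inside the Lorentzian framework via Theorem~\ref{thm:changeVar} and Proposition~\ref{prop:truncation}.

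That said, your sketch has a real gap in the place you flagged. You write that $T$ itself is ``up to scaling'' the composition of $t\mapsto t_1+\cdots+t_k$ followed by squarefree truncation, and then jump to ``$T^\vee$ is a composition of Lorentzian-preserving operators''. The scaling factor is $b!$, which depends on $b$, so decomposing $T$ this way does \emph{not} by itself give a decomposition of $T^\vee$ into Lorentzian-preserving pieces. The saving observation---which you are circling but never state---is that the $b!$ is exactly absorbed when you pass to $T^\vee$: a direct computation (your step~1, carried through) gives $T^\vee(x^\beta t^b)=b!\,x^\beta e_b(t_1,\ldots,t_k)$, and this is \emph{literally} the squarefree truncation of $x^\beta(t_1+\cdots+t_k)^b$. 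Hence $T^\vee$ itself equals substitution followed by truncation, both of which preserve Lorentzian polynomials, and you are done without ever computing a symbol. Two smaller points: your symbol setup introduces dual variables $v_1,\ldots,v_d$ for $t_1,\ldots,t_d$, but the \emph{source} of $T^\vee$ has a single variable $t$, so there is only one dual variable $v$; and your intermediate formula ``$x^{\kappa-\beta}\,e_{d-k+b}$'' has both blocks undualized---you correctly caught the $x$-block, and the $t$-block likewise flips back to $e_b$.
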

\begin{proof}
  For the operator $T^\vee$ we obtain $T^\vee(x^\beta t ^b)=b! \cdot x^\beta e_{k-b}(t_1,\ldots,t_k)$.

  On the other hand, the operator \[\Pi^\uparrow_{d,k} \colon \RR_{(\kappa,k)}[x_1,\ldots,x_n,t] \mapsto \RR_{(\kappa,\underline{\mathbf{1}})}[x_1,\ldots,x_n,t_1,\ldots,t_d]\]
  given by \[x^\beta t^b \mapsto {d \choose b}^{-1} x^\beta e_b(d\cdot t_1,\ldots, d\cdot t_k,0 \ldots,0)\]  preserves homogeneous and stable polynomials by \cite[Lem.~ 2.7,Prob.~3.4]{zbMATH05598918}. Hence, by \cite[Thm.~2.1]{zbMATH05598918} its symbol
  \[
    \operatorname{sym}_{\Pi^\uparrow_{d,k}} =\sum_{\beta,b} { \kappa  \choose \beta}{ k  \choose b} { d  \choose b}^{-1}d^b x^\beta e_b(t_1,\ldots,t_k)
  \]
  is homogeneous and stable and therefore by \cite{BrandenHuh} also Lorentzian. Now, we observe that
  \[\lim_{d\to \infty} \operatorname{sym}_{\Pi^\uparrow_{d,k}} = \sum_{\beta,b} { \kappa  \choose \beta}{ k  \choose b} \cdot b! \cdot x^\beta e_b(t_1,\ldots,t_k)=\operatorname{sym}_{T^\vee}.\]
  By the closedness of the subset of Lorentzian polynomials we conclude that the co-symbol is Lorentzian and therefore $T$ preserves the dual Lorentzian property.
\end{proof}

\begin{corollary}
  \label{cor:polarization-dual}
  Given any dually Lorentzian polynomial $s\in  \RR[x_1,\ldots,x_n]$ the polynomial $\sum s^{(i)}(x_1,\ldots,x_n) e_i(t_1,\ldots,t_k)$ is dually Lorentzian.
\end{corollary}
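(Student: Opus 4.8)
The plan is to exhibit the claimed polynomial as $T(\tilde s)$ for a single auxiliary dually Lorentzian polynomial $\tilde s$, where $T$ is the polarization operator of Proposition~\ref{prop:polarization-dual}. The key observation is that the defining identity for the derived polynomials, evaluated at $u=t$, reads
\[ s(x_1+t,\ldots,x_n+t)=\sum_{j=0}^{d} s^{(j)}(x_1,\ldots,x_n)\,t^{j} =: \tilde s(x_1,\ldots,x_n,t), \]
where $d=\deg s$. Thus $\tilde s$ is homogeneous of degree $d$ in the $n+1$ variables $x_1,\ldots,x_n,t$, and it is obtained from $s$ by the substitution $x_i\mapsto x_i+t$; that is, $\tilde s=s(Ay)$ with $y=(x_1,\ldots,x_n,t)$ and $A$ the $n\times(n+1)$ matrix $[\,I_n\mid \mathbf 1\,]$ (first $n$ columns the identity, last column all ones). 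Since $A$ has non-negative entries, Theorem~\ref{thm:linear-trafo} shows that $\tilde s$ is dually Lorentzian.

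Next I would apply $T$. If $k\ge d$, then for a suitable multidegree $\tilde s$ lies in the domain $\RR_{(\kappa,k)}[x_1,\ldots,x_n,t]$ of $T$, and by the very definition of $T$ one has $T(\tilde s)=\sum_{j} s^{(j)}(x_1,\ldots,x_n)\,e_j(t_1,\ldots,t_k)$; since $T$ preserves the dual Lorentzian property, this polynomial is dually Lorentzian. If $k<d$, I would first replace $\tilde s$ by its truncation in $t$-degree $\le k$, which is still dually Lorentzian by Corollary~\ref{cor:dual-truncation}, and then apply $T$; because $e_j(t_1,\ldots,t_k)=0$ for $j>k$, the output is again $\sum_{j} s^{(j)}(x)\,e_j(t_1,\ldots,t_k)$, and we are done.

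There is essentially no serious obstacle: once one recognizes $\sum_j s^{(j)}t^{j}$ as $s(x_1+t,\ldots,x_n+t)$, the statement reduces to a non-negative linear reparametrization followed by $T$. The only points requiring a word of care are bookkeeping ones — verifying that $\tilde s$ (or its truncation) lies in the multidegree range on which $T$ is defined, and noting that $s^{(j)}$ is homogeneous of degree $d-j$, so that $s^{(j)}(x)\,e_j(t)$ is homogeneous of degree $d$ and the sum is a legitimate homogeneous polynomial for which ``dually Lorentzian'' is meaningful. (Alternatively one could iterate Corollary~\ref{cor:derived}/Proposition~\ref{prop:dual-and-derivative} together with Proposition~\ref{prop:dual-product}, but routing through $T$ is cleaner and gives the full statement in one step.)
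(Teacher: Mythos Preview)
Your proposal is correct and follows exactly the same route as the paper: recognize $\sum_j s^{(j)}(x)\,t^j = s(x_1+t,\ldots,x_n+t)$, invoke Theorem~\ref{thm:linear-trafo} to see this is dually Lorentzian, and then apply the polarization operator $T$ of Proposition~\ref{prop:polarization-dual}. The only difference is that you spell out the bookkeeping for the case $k<d$ via truncation (Corollary~\ref{cor:dual-truncation}), which the paper's two-line proof leaves implicit.
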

\begin{proof}
  We have $s(x_1+t, \ldots, x_n+t)=\sum_i s^{(i)}t^i$ is dually Lorentzian by Theorem~\ref{thm:linear-trafo}. Now, applying Proposition~\ref{prop:polarization-dual} gives the desired result.
\end{proof}

For the next result we call that we say that $\mu_0,\ldots, \mu_N$  is a P\'olya frequency sequence
if the matrix $(\mu_{i-j})_{i,j}$ is totally positive (where we set $\mu_i = 0$ for $i < 0$).

\begin{corollary}
  \label{cor:polya-sequence}
  If $\mu_i$ is a P\'olya frequency sequence and $s$ a dually Lorentzian polynomial, then $\sum \mu_i s^{(i)}t^i$ is dually Lorentzian.
\end{corollary}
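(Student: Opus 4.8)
The plan is to realize $\sum_i\mu_i s^{(i)}(x)t^i$ as the image of the polynomial $s(x_1+t,\dots,x_n+t)=\sum_i s^{(i)}(x)t^i$ under the linear operator $T_\mu$ that fixes the variables $x_i$ and sends $t^b\mapsto\mu_b t^b$, and then to show that $T_\mu$ preserves the dual Lorentzian property. We may assume the P\'olya frequency sequence is finite, say $\mu_0,\dots,\mu_N$ (appending zeros keeps it a P\'olya frequency sequence), set $d:=\deg s$, choose $K\ge\max(N,d)$, and regard $T_\mu$ as an operator on $\RR_{(d,\dots,d,K)}[x_1,\dots,x_n,t]$. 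By Theorem~\ref{thm:linear-trafo} the polynomial $s(x_1+t,\dots,x_n+t)$ is dually Lorentzian and lies in this space, and applying $T_\mu$ to it yields precisely $\sum_i\mu_i s^{(i)}t^i$; so it remains to prove the claim about $T_\mu$ (the cases $d\le 1$ being immediate).

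By Lemma~\ref{lem:dual-symbol} it suffices to show that the co-symbol $\operatorname{sym}_{T_\mu^\vee}$ is Lorentzian. First I would compute $T_\mu^\vee$ on monomials: a short calculation with $(\cdot)^\wedge$ and $(\cdot)^\vee$ (using $(x^\beta t^b)^\wedge=\beta!\,b!\,x^{\kappa'-\beta}t^{K-b}$, where $\kappa'=(d,\dots,d)$) gives $T_\mu^\vee(x^\beta t^b)=\mu_{K-b}\,x^\beta t^b$, so $T_\mu^\vee$ is again diagonal, now with the reversed sequence. Substituting this into the formula for the symbol, and writing $u,v$ for the variables dual to $x,t$, one obtains
\[
\operatorname{sym}_{T_\mu^\vee}(x,t,u,v)=\Bigl(\prod_{i=1}^n(x_i+u_i)^{d}\Bigr)\,\Phi(t,v),\qquad\Phi(t,v):=\sum_{c=0}^{K}\binom{K}{c}\mu_c\,t^{K-c}v^c .
\]

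Next I would check that each factor is Lorentzian, so that their product — a product of Lorentzian polynomials in disjoint sets of variables — is Lorentzian by Theorem~\ref{thm:changeVar}. The first factor is a product of powers of linear forms with non-negative coefficients, hence Lorentzian by Theorem~\ref{thm:changeVar}. For $\Phi$, the coefficients divided by the corresponding binomial coefficients are exactly the entries $\mu_0,\dots,\mu_K$ of our zero-padded P\'olya frequency sequence; such a sequence has no internal zeros and is log-concave, and via Lemma~\ref{lem:sylvester-criterion} applied to the $2\times 2$ quadratic forms $\partial^\alpha\Phi$ ($|\alpha|=K-2$) these two properties translate into exactly the M-convexity of the support and the conditions on the eigenvalues defining a Lorentzian polynomial. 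Thus $\operatorname{sym}_{T_\mu^\vee}$ is Lorentzian, $T_\mu$ preserves the dual Lorentzian property, and the corollary follows.

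The step I expect to require the most care is precisely this passage through the dual operator $T_\mu^\vee$. The more direct idea — factor $\sum_i\mu_i z^i=c\,z^m\prod_j(1+\alpha_j z)$, treat $\prod_j(1+\alpha_j z)$ by substituting $t_j\mapsto\alpha_j t$ into Corollary~\ref{cor:polarization-dual}, and absorb the monomial factor $z^m$ by multiplying by $t^m$ — breaks down, since after extracting $t^m$ one is left with $\sum_j\mu_{j+m}\,s^{(j+m)}(x)t^j$, and the shifted superscripts $s^{(j+m)}$ prevent a reduction to Corollary~\ref{cor:polarization-dual} (whose output involves $s^{(j)}$, not $s^{(j+m)}$); working with $T_\mu^\vee$ circumvents this at the cost of the bookkeeping above.
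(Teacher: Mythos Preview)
Your proof is correct and takes a genuinely different route from the paper's. The paper invokes the Aissen--Schoenberg--Whitney theorem to factor $\sum_i \mu_i t^i = \lambda \prod_j(1+a_j t)$, then substitutes $t_j \mapsto a_j t$ into the dually Lorentzian polynomial $\sum_i s^{(i)}(x)\,e_i(t_1,\ldots,t_k)$ furnished by Corollary~\ref{cor:polarization-dual} and appeals to Theorem~\ref{thm:linear-trafo}. You instead bypass both the ASW factorisation and Corollary~\ref{cor:polarization-dual} entirely: you realise the target polynomial as $T_\mu$ applied to $s(x_1+t,\ldots,x_n+t)$ and verify directly, via Lemma~\ref{lem:dual-symbol}, that the co-symbol of the diagonal operator $T_\mu$ is Lorentzian. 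The only facts about PF sequences you use are the elementary ones (non-negativity, log-concavity, no internal zeros), which feed straight into the two-variable Lorentzian criterion for $\Phi(t,v)$. Your route carries a little more bookkeeping but is more self-contained, and, as you observe, it handles sequences with $\mu_0=0$ uniformly; the paper's argument tacitly assumes $\mu_0>0$ in writing $\sum_i\mu_i t^i=\lambda\prod_j(1+a_jt)$, and one would otherwise need a limiting argument (replace the factor $t^m$ by $(t+\epsilon)^m$ and let $\epsilon\to 0$) to close that gap.
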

\begin{proof}
From the Aissen-Schoenberg-Whitney Theorem \cite{zbMATH03077238} we conclude that $\sum_i \mu_i = \lambda \prod_i (t + a_i)$ for some $\lambda >0$ and $a_i \geq 0$, which implies  that $\mu_i=ke_i(a_1\ldots,a_k)$. Now, we consider the dually Lorentzian polynomial $\sum_i s^{(i)}(x_1,\ldots,x_n) e_i(t_1,\ldots,t_k)$ from Corollary~\ref{cor:polarization-dual}. By the linear transformation $t_i \mapsto a_it$ we obtain
  \begin{align*}
    \sum_i s^{(i)}(x_1,\ldots,x_n) e_i(a_1t,\ldots,a_kt)&=\sum_i s^{(i)}(x_1,\ldots,x_n) e_i(a_1,\ldots,a_k)t^i\\
                                                          &=\sum_i \mu_i s^{(i)}(x_1,\ldots,x_n) t^i.
  \end{align*}
  Now, the latter is dually Lorentzian by Theorem~\ref{thm:linear-trafo}.
\end{proof}

\begin{remark} The previous proof follows \cite[Prop.~9.3]{RT20}, but in the general context of dually Lorentzian polynomials and without any reference to geometric constructions. 
\end{remark}

\section{Strictly Lorentzian polynomials}\label{sec:strictly}

We let $H_n^d\subset \mathbb R[x_1,\ldots,x_n]$ denote the subspace of homogeneous polynomials.  Recall that $\mathrm L^d_n\subset \mathrm H^d_n$ denotes the set of Lorentzian polynomials and $\sL^d_n$ denotes the open set of strictly Lorentzian polynomials. 
\begin{lemma}\label{lem:interiorL}
	The interior of $\mathrm L_n^d$ is $\sL^d_n$. 
\end{lemma}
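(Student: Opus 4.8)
The plan is to prove the two inclusions $\sL_n^d\subseteq\operatorname{int}\mathrm L_n^d$ and $\operatorname{int}\mathrm L_n^d\subseteq\sL_n^d$ separately, working inside the finite-dimensional space $H_n^d$; since the cases $d\le 1$ are immediate from the definitions, I assume $d\ge 2$. Throughout, for $f\in H_n^d$ and $|\alpha|=d-2$ let $M_\alpha(f)$ denote the symmetric matrix attached to the quadratic form $\partial^\alpha f$ as in the proof of Proposition~\ref{prop:truncation}; the assignment $f\mapsto M_\alpha(f)$ is linear, hence continuous. For the first inclusion, $\sL_n^d\subseteq\mathrm L_n^d$ holds by definition, so it suffices to show $\sL_n^d$ is open in $H_n^d$: it is cut out by the conditions that all coefficients $\lambda_\alpha$ ($|\alpha|=d$) are positive and that each of the finitely many matrices $M_\alpha(f)$ ($|\alpha|=d-2$) has signature $(+,-,\dots,-)$ --- note that once all coefficients are positive the support is the entire degree-$d$ simplex, which is automatically M-convex, so no further condition is hidden. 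Positivity of a coefficient is open, and since the set of symmetric matrices of fixed signature is open and $f\mapsto M_\alpha(f)$ is continuous, the signature conditions are open as well; hence $\sL_n^d$ is open and $\sL_n^d\subseteq\operatorname{int}\mathrm L_n^d$.

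For the reverse inclusion, let $f\in\operatorname{int}\mathrm L_n^d$. If some coefficient $\lambda_\alpha$ vanished then $f-\varepsilon x^\alpha\notin\mathrm L_n^d$ for every $\varepsilon>0$ (it has a negative coefficient), contradicting that $f$ is interior; so all coefficients of $f$ are positive. Consequently every $M_\alpha(f)$ has strictly positive entries, in particular positive diagonal entries $c_{ii}>0$, so $e_i^\top M_\alpha(f)e_i>0$ exhibits a positive eigenvalue, while $f\in\mathrm L_n^d$ forces at most one --- hence exactly one. It therefore remains only to show that each $M_\alpha(f)$ is non-degenerate: its remaining $n-1$ eigenvalues are then negative, the signature is $(+,-,\dots,-)$, and, $f$ being fully supported, $f\in\sL_n^d$.

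Suppose for contradiction that $M_{\alpha_0}(f)$ has a non-zero null vector $v$. The operator $\partial^{\alpha_0}\colon H_n^d\to H_n^2$ is surjective, since it sends $x^{\alpha_0+\mu}$ to a positive multiple of $x^\mu$ for each $|\mu|=2$; hence there is $g_0\in H_n^d$ with $\partial^{\alpha_0}g_0=\bigl(\sum_i v_ix_i\bigr)^2$, i.e.\ $v^\top M_{\alpha_0}(g_0)v=|v|^4>0$. Put $f_\varepsilon=f+\varepsilon g_0$, so $M_{\alpha_0}(f_\varepsilon)=M_{\alpha_0}(f)+\varepsilon M_{\alpha_0}(g_0)$. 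Note that $v$ is not a scalar multiple of any $e_i$, because $M_{\alpha_0}(f)v=0$ while $M_{\alpha_0}(f)e_i\neq 0$ (it is a column with positive entries); fixing any $i$, the Gram matrix of $M_{\alpha_0}(f_\varepsilon)$ restricted to $\operatorname{span}(e_i,v)$ is, using $M_{\alpha_0}(f)v=0$,
\[
\begin{pmatrix} c_{ii}+O(\varepsilon) & O(\varepsilon)\\ O(\varepsilon) & \varepsilon|v|^4 \end{pmatrix},
\]
which has positive trace and determinant $c_{ii}|v|^4\varepsilon+O(\varepsilon^2)>0$ for all sufficiently small $\varepsilon>0$, hence is positive definite. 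Then $M_{\alpha_0}(f_\varepsilon)$ has a two-dimensional positive subspace, so $\partial^{\alpha_0}f_\varepsilon$ has at least two positive eigenvalues and $f_\varepsilon\notin\mathrm L_n^d$; since $f_\varepsilon\to f$, this contradicts $f\in\operatorname{int}\mathrm L_n^d$ and completes the proof.

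The only genuinely non-routine step is this non-degeneracy argument; everything else is bookkeeping. Its point is that $\partial^{\alpha_0}$ surjects onto quadratic forms, which is exactly what lets us prescribe a first-order perturbation of $M_{\alpha_0}(f)$ lifting a zero eigenvalue into the positive range along a direction that is $M_{\alpha_0}(f)$-orthogonal to an already-positive direction, thereby manufacturing a second positive eigenvalue and leaving $\mathrm L_n^d$.
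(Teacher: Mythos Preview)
Your proof is correct and follows essentially the same line as the paper's: both arguments obtain coefficient positivity by perturbation and then establish non-degeneracy of each $M_\alpha(f)$ via the surjectivity of $\partial^\alpha\colon H_n^d\to H_n^2$. The paper packages the last step slightly differently---surjective linear maps are open, so $\partial^\alpha f$ lands in the interior of $\mathrm L_n^2$, where one perturbs the quadratic form directly---and it omits the forward inclusion $\sL_n^d\subseteq\operatorname{int}\mathrm L_n^d$ that you spell out, but the substance is the same.
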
	
\begin{proof}
	Let $f$ lie in the interior of $ \mathrm L^d_n$. We will show that  $f\in  \sL^d_n$.
	
	Since $f\in \mathrm L^d_n$, $f$ has non-negative coefficients. Assume that one coefficient of $f$  is in fact zero. Then we could perturb $f$ into a polynomial $\tilde f$ that has a negative coefficient, while staying in interior of $ \mathrm L^d_n$, a contradiction.
	
	The linear map $\partial^\alpha \colon \mathrm H ^d_n \to \mathrm H ^2_n$, $|\alpha|=d-2$, is surjective, hence open. Since $\partial^\alpha \mathrm L^d_n\subset \mathrm L^2_n$,  $\partial^\alpha f$ belongs to the interior of $\mathrm L^2_n$. 
	By the previous paragraph, $\partial^\alpha f $ has positive coefficients. By the Frobenius-Perron theorem, the quadratic form $\partial^\alpha f $ has a positive eigenvalue. If $0$ is an eigenvalue of $\partial^\alpha f $, then we can perturb it into a quadratic form $q$  that lies in the interior of $\mathrm L^2_n$ and  has more than one positive eigenvalue, a contradiction. 
	Thus $\partial^\alpha f \in \overset{\circ}{\mathrm L}{}^2_n$. Since $\alpha$ was arbitrary, we obtain $f\in \sL^d_n$, as desired.
\end{proof}

\begin{lemma}\label{lemma:SurjStrictly}
	Let $T\colon \mathrm H^d_n\to \mathrm H^{d'}_{n'}$, $d'\geq 2$, be a surjective linear map. If $T(\mathrm L^d_n)\subset \mathrm L^{d'}_{n'}$, then $ T(\sL^d_n)\subset \sL^{d'}_{n'}$.
\end{lemma}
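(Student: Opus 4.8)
The plan is to deduce this from two facts: a surjective linear map between finite-dimensional real vector spaces is open (this is used already in the proof of Lemma~\ref{lem:interiorL}), and $\sL_n^d$ is exactly the interior of $\mathrm L_n^d$ inside $\mathrm H_n^d$ (Lemma~\ref{lem:interiorL}, using that $\mathrm L_n^d$ is closed so that its interior is well behaved).

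First I would record why $T$ is open. Writing $K=\ker T\subset \mathrm H_n^d$, the quotient projection $\pi\colon \mathrm H_n^d\to \mathrm H_n^d/K$ is open, and since $T$ is surjective it factors as $T=\bar T\circ \pi$ with $\bar T\colon \mathrm H_n^d/K\to \mathrm H_{n'}^{d'}$ a linear bijection between finite-dimensional spaces, hence a homeomorphism and in particular open; so $T$ is open. Now let $f\in \sL_n^d$. By Lemma~\ref{lem:interiorL}, $f$ lies in the interior of $\mathrm L_n^d$ in $\mathrm H_n^d$, so we may pick an open neighbourhood $U$ of $f$ with $U\subset \mathrm L_n^d$. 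Then $T(U)$ is open in $\mathrm H_{n'}^{d'}$, contains $T(f)$, and by hypothesis $T(U)\subset T(\mathrm L_n^d)\subset \mathrm L_{n'}^{d'}$. Hence $T(f)$ is an interior point of $\mathrm L_{n'}^{d'}$, and since $d'\geq 2$ we may apply Lemma~\ref{lem:interiorL} in $\mathrm H_{n'}^{d'}$ to conclude $T(f)\in \sL_{n'}^{d'}$. As $f\in \sL_n^d$ was arbitrary, this gives $T(\sL_n^d)\subset \sL_{n'}^{d'}$.

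There is essentially no obstacle here; the only points that need a moment of care are that ``interior'' must be taken with respect to the ambient homogeneous-polynomial spaces $\mathrm H_n^d$ and $\mathrm H_{n'}^{d'}$ (note $T(\mathrm H_n^d)=\mathrm H_{n'}^{d'}$ by surjectivity, so there is no ambiguity about the ``right'' ambient space for the target), and that the hypothesis $d'\geq 2$ is precisely what makes Lemma~\ref{lem:interiorL} applicable to $\mathrm L_{n'}^{d'}$. If $\sL_n^d=\varnothing$ the statement is vacuous, so nothing is lost in assuming it nonempty.
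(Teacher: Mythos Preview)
Your proof is correct and follows exactly the approach taken in the paper: the paper's proof is the single sentence ``Since $T$ is open, this follows from Lemma~\ref{lem:interiorL},'' and you have simply spelled out the details of why a surjective linear map between finite-dimensional spaces is open and how the openness transfers interior points.
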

\begin{proof}
	Since $T$ is open, this follows from Lemma~\ref{lem:interiorL}
\end{proof}

We need the following elementary fact.
\begin{lemma}\label{lemma:surjectivity}
  Let $\partial \in \RR[\partial_{x_1},\ldots, \partial_{x_n}]$ be a non-zero homogeneous differential operator of degree $d$ and
  $\RR[{x_1},\ldots, {x_n}]_m$ the set of homogeneous polynomials of degree $m$. Then the map
  \[\partial \colon \RR[{x_1},\ldots, {x_n}]_m \to \RR[{x_1},\ldots, {x_n}]_{m-d}\]
  is surjective.
\end{lemma}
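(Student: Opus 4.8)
The plan is to reduce to the known fact that a single partial derivative acts surjectively on homogeneous polynomials, and then handle a general operator by a duality (apolarity) argument. First I would recall the nondegenerate apolarity pairing between $\RR[x_1,\ldots,x_n]_k$ and $\RR[\partial_{x_1},\ldots,\partial_{x_n}]_k$, given by $\langle \partial^\alpha, x^\beta\rangle = \partial^\alpha(x^\beta)$, which equals $\alpha!$ if $\alpha=\beta$ and $0$ otherwise; thus monomials form dual bases up to the nonzero scalars $\alpha!$, and the pairing is a perfect pairing of finite-dimensional vector spaces. Under this pairing, the adjoint of the map $\partial\colon \RR[x_1,\ldots,x_n]_m\to \RR[x_1,\ldots,x_n]_{m-d}$ (thought of as multiplication by the polynomial symbol of $\partial$ in the $\partial$-variables) is, up to the harmless diagonal rescaling by factorials, multiplication by the same polynomial $s$ on $\RR[x_1,\ldots,x_n]$ in the $x$-variables. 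Hence surjectivity of $\partial$ in degree $m$ is equivalent to injectivity of multiplication by $s$ from $\RR[x_1,\ldots,x_n]_{m-d}$ to $\RR[x_1,\ldots,x_n]_{m}$.

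So the statement to prove becomes: if $s\in\RR[x_1,\ldots,x_n]$ is a nonzero homogeneous polynomial of degree $d$, then multiplication by $s$ is injective on the polynomial ring. This is immediate because $\RR[x_1,\ldots,x_n]$ is an integral domain: if $s\cdot g = 0$ with $s\neq 0$ then $g=0$. That finishes the argument once the adjointness computation is in place.

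Alternatively — and this avoids even setting up apolarity carefully — I would argue directly by induction on the number of variables $n$ and on the degree $d$ of the operator. Writing $\partial = \sum_{i} \partial_{x_i}\circ \partial_i'$ where at least one of the first-order factors $\partial_{x_i}$ genuinely appears (possible since $\partial$ is nonzero homogeneous of positive degree $d\ge 1$; the case $d=0$ being trivial), one reduces the surjectivity of $\partial$ to surjectivity of a single $\partial_{x_i}\colon \RR[x_1,\ldots,x_n]_{\ell}\to\RR[x_1,\ldots,x_n]_{\ell-1}$, which holds because every monomial $x^\beta$ with $|\beta|=\ell-1$ is hit by $\partial_{x_i}$ (apply it to $\tfrac{1}{\beta_i+1}x^{\beta+e_i}$). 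The bookkeeping is that the lower-order operator $\partial_i'$ is again nonzero homogeneous of degree $d-1$ for at least one $i$, so the inductive hypothesis applies.

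The main obstacle is purely organizational: in the second approach one must be slightly careful that when one splits off a single $\partial_{x_i}$, the remaining operator $\partial_i'$ is actually nonzero for the chosen $i$, and that the composition $\partial_{x_i}\circ\partial_i'$ recovering $\partial$ as a sum behaves well — really the cleanest route is just the integral-domain argument via the apolarity adjoint, where the only thing to verify is that the adjoint of "apply $\partial$" is "multiply by $s$" up to diagonal scaling, a one-line computation on monomials. I expect to present the integral-domain version as the proof.
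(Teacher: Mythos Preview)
Your apolarity argument is correct and is genuinely different from the paper's proof. The paper argues directly by a triangularity trick in the lexicographic order: pick the lex-smallest $\alpha$ with $c_\alpha\neq 0$ in $\partial=\sum c_\alpha\partial^\alpha$, observe that the lex-largest monomial in $\partial x^{\alpha+\beta}$ is $x^\beta$, and then build up all monomials $x^\beta$ inductively in lex order. This is entirely self-contained and constructive, producing explicit preimages monomial by monomial. Your route instead dualizes: under the apolarity pairing the adjoint of $\partial_s$ is exactly multiplication by $s$ on the polynomial ring (no rescaling is actually needed if you keep the dual space as $\RR[\partial]_k$ rather than identifying it with $\RR[x]_k$), and injectivity of that multiplication is immediate from the integral-domain property. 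Your argument is shorter and more conceptual; the paper's is more elementary and gives a concrete algorithm. Your second, inductive sketch is weaker as written---a sum of surjections need not be surjective, so the decomposition $\partial=\sum_i\partial_{x_i}\circ\partial_i'$ does not by itself close the induction---but since you flag this and opt for the apolarity proof, the proposal stands.
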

\begin{proof}

Since $\partial=\sum c_\alpha \partial^\alpha$ is non-zero, there exists a with respect to lexicographical order a  smallest   multi-index $\alpha$ with $c_\alpha\neq 0$. Let $\beta$ be a multi-index with $|\beta| = m-d$. Observe that 
the largest mononial apperaring  in $\partial x^{\alpha+\beta}$ is  $x^\beta$. 

The claimed surjectivity follows now inductively. Indeed, since $x_n^{m}$ is minimal in lexicographical order, $\partial x^{\alpha+me_n}$ is a non-zero multiple of $x_n^m$.  Let $\beta>me_n$ be a multi-index with $|\beta| = m-d$ and suppose that we have already proved that the image of $\partial \colon \RR[{x_1},\ldots, {x_n}]_m \to \RR[{x_1},\ldots, {x_n}]_{m-d}$ contains all monomials $x^{\beta'}$ with $\beta'<\beta$.  
Since $x^\beta$ is the largest monomial in $\partial x^{\alpha+\beta}$, we conclude that also $x^\beta$ is contained in the image of $\partial$. 
\end{proof}

\begin{theorem}
\label{thm:strictly-lorentzian}
  Let $s\in \RR[x_1,\ldots, x_n]$ be a non-zero dually Lorentzian polynomial of degree $d\leq m$ and let $\partial_s=s(\partial_1,\ldots, \partial_n)\in \RR[\partial_1,\ldots, \partial_n]$. If $f\in  \mathrm  L^m_n$ is strictly Lorentzian, then 
	$ \partial_s f\in \mathrm H^{m-d}_n$ is strictly Lorentzian.
\end{theorem}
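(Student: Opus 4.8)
The plan is to reduce the statement to three facts already available: that $\partial_s$ preserves the Lorentzian property (Theorem~\ref{thm:operator-criterion}, since $s$ is dually Lorentzian), that $\partial_s$ is surjective as a map $\mathrm H^m_n \to \mathrm H^{m-d}_n$ (Lemma~\ref{lemma:surjectivity}), and that a surjective linear map carrying Lorentzian polynomials to Lorentzian polynomials automatically carries strictly Lorentzian ones to strictly Lorentzian ones (Lemma~\ref{lemma:SurjStrictly}, itself a consequence of $\sL^d_n$ being the interior of $\mathrm L^d_n$, Lemma~\ref{lem:interiorL}). Since $s$ is dually Lorentzian, $s^\vee$ is Lorentzian and hence has non-negative coefficients, so $s$ has non-negative coefficients and, being non-zero, at least one of them is strictly positive; in particular $\partial_s$ is a non-zero homogeneous differential operator of degree $d$, so Lemma~\ref{lemma:surjectivity} indeed applies.

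For the main case $m - d \ge 2$ I would simply apply Lemma~\ref{lemma:SurjStrictly} to $T = \partial_s \colon \mathrm H^m_n \to \mathrm H^{m-d}_n$: it is surjective by the above, and $\partial_s(\mathrm L^m_n) \subset \mathrm L^{m-d}_n$ by Theorem~\ref{thm:operator-criterion}, whence $\partial_s(\sL^m_n) \subset \sL^{m-d}_n$, which is exactly the assertion.

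The remaining work, and the only place needing genuine care, is the two degenerate cases $m - d \in \{0,1\}$, where being strictly Lorentzian is a positivity condition on coefficients rather than a signature condition, so Lemma~\ref{lemma:SurjStrictly} does not directly apply. Here I would argue by hand: write $s = \sum_{|\alpha| = d}\lambda_\alpha x^\alpha$ with $\lambda_\alpha \ge 0$ not all zero. If $f \in \sL^m_n$ then $f$ is supported on all monomials of degree $m$ with strictly positive coefficients $c_\beta(f)$, and one computes $\partial^\alpha f = \alpha!\, c_\alpha(f) > 0$ whenever $|\alpha| = m$, while for $|\alpha| = m - 1$ the coefficient of $x_i$ in $\partial^\alpha f$ equals $(\alpha_i + 1)\,\alpha!\, c_{\alpha + e_i}(f) > 0$. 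Summing against the weights $\lambda_\alpha$ then shows that when $d = m$ the polynomial $\partial_s f$ is a strictly positive constant, and when $d = m - 1$ it is a linear form with a strictly positive coefficient at every variable; in both cases $\partial_s f$ is strictly Lorentzian by definition, completing the proof. I do not expect any substantial obstacle beyond this bookkeeping for the low-degree cases, since the heavy lifting was already carried out in Theorem~\ref{thm:operator-criterion}.
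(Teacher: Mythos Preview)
Your proof is correct and follows essentially the same route as the paper: invoke Theorem~\ref{thm:operator-criterion} for preservation of the Lorentzian property, Lemma~\ref{lemma:surjectivity} for surjectivity, and then Lemma~\ref{lemma:SurjStrictly} to conclude. The paper's own proof is a one-liner doing exactly this. Your additional treatment of the cases $m-d\in\{0,1\}$ is in fact more careful than the paper, since Lemma~\ref{lemma:SurjStrictly} as stated carries the hypothesis $d'\ge 2$; your direct coefficient computation for these degenerate degrees is correct and closes a gap the paper leaves implicit (the underlying Lemma~\ref{lem:interiorL} does hold for $d=0,1$ as well, so the restriction in Lemma~\ref{lemma:SurjStrictly} is not truly needed, but you are right to address it given how the lemma is written).
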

\begin{proof} Since $Tf=\partial_s f$ is surjective and preserves Lorentzian polynomials
by Theorem~\ref{thm:operator-criterion},  the claim follows from Lemma~\ref{lemma:SurjStrictly}.
	
\end{proof}

\section{$C$-Lorentzian polynomials}\label{sec:CLorentzian}

Br\"and\'en and Leake \cite{BrandenLeake:CLorentzian} have  recently introduced the notion of $C$-Lorentzian polynomials, where $C$ is an open convex cone in $\RR^n$. In the special case where $C$ is the positive orthant, one recovers the Lorentzian polynomials of Br\"and\'en-Huh. We show here that our main theorem (Theorem~\ref{thm:intropreserve}) extends to this more general setting. We will need this additional  flexibility in the applications.

For $\mathbf v =(v_1,\ldots,v_n) \in \RR^n$ and $f \in \RR[x_1,\ldots,x_n]$ we define the directional derivative $D_{\mathbf{v}} = \sum_i v_i\partial_{x_i} \in \RR[\partial_{x_1},\ldots,\partial_{x_n}]$.

\begin{definition}
\label{def:c-lorentzian}
Let $C \subset \RR^n$ be an open convex cone. A homogeneous polynomial $f \in \RR[x_1,\ldots,x_n]$ of degree $d\geq 1$ is called  \emph{$C$-Lorentzian} if  the following two properties hold:
\begin{enumerate}
	\item  $D_{\mathbf v_1} \cdots D_{\mathbf v_d}f > 0$  for $\mathbf v_1, \ldots, \mathbf v_d \in C$; \label{item:P}
	\item  For $\mathbf v_1, \ldots, \mathbf v_{d-2} \in C$ the bilinear form $$(\mathbf x,\mathbf y) \mapsto  D_{\mathbf x}D_{\mathbf y}D_{\mathbf v_1} \cdots D_{\mathbf v_{d-2}}f$$ has exactly one positive eigenvalue.\label{item:HR}
\end{enumerate}
Moreover,   non-negative constants are defined to be $C$-Lorentzian.
We say that $f$ is \emph{strictly $C$-Lorentzian}, if it lies in the interior of the set of $C$-Lorentzian polynomials.
\end{definition}

Observe that the definition of $C$-Lorentzian polynomials is actually independent of the  choice of coordinates on $\RR^n$. Hence we may speak of $C$-Lorentzian polynomials on a finite-dimensional real vector space, and will use this freely in our applications.

\begin{theorem}\label{thm:invarianceC} Let $C\subset \RR^n$ be an open convex cone and let $\mathbf w_1, \ldots, \mathbf w_m\in C$.
	Suppose that $s\in \mathbb R[x_1,\ldots,x_m]$ is non-zero.  If  $s$ is dually Lorentzian, then the  operator
	$$\partial_s  = s(D_{\mathbf w_1},\ldots,D_{\mathbf w_m}) : \mathbb R[x_1,\ldots,x_n] \to  \mathbb R[x_1,\ldots,x_n]$$
	preserves the property of being (strictly) $C$-Lorentzian.
\end{theorem}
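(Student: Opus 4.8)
The plan is to reduce the statement to the already-established Lorentzian case (Theorem~\ref{thm:operator-criterion} and Theorem~\ref{thm:strictly-lorentzian}) by a linear change of coordinates, exploiting the fact—recorded after Definition~\ref{def:c-lorentzian}—that the $C$-Lorentzian property is coordinate-independent. First I would recall the structural description of $C$-Lorentzian polynomials from \cite{BrandenLeake:CLorentzian}: after applying a linear isomorphism $A\colon \RR^n\to\RR^n$ carrying the positive orthant $\RR^n_{>0}$ into $C$, a homogeneous polynomial $f$ is $C$-Lorentzian if and only if $f\circ A$ is Lorentzian in the Br\"and\'en--Huh sense; more precisely, being $C$-Lorentzian is equivalent to the pullback being Lorentzian along any (equivalently, some) linear map sending the open orthant into $C$. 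One should be slightly careful here: $C$ may not be simplicial, so a single linear map need not surject onto $C$, but the directional-derivative conditions \eqref{item:P}--\eqref{item:HR} only need to be tested on a spanning set of directions of $C$, and since $\RR^n_{>0}$ spans $\RR^n$ this suffices. I would isolate this equivalence as the one external input and cite it cleanly.

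Next, the key computational point is that the operator $\partial_s=s(D_{\mathbf w_1},\dots,D_{\mathbf w_m})$ intertwines with such a change of variables. Concretely, let $f$ be $C$-Lorentzian in $\RR[x_1,\dots,x_n]$, pick a linear $A$ with $A(\RR^n_{>0})\subset C$, and set $g=f\circ A$, which is Lorentzian. Since $D_{\mathbf w}(f)\circ A = D_{A^{-1}\mathbf w}(f\circ A)$ by the chain rule, and each $\mathbf w_i\in C$, after possibly enlarging the source coordinate space and composing with a further nonnegative matrix (using Theorem~\ref{thm:changeVar}(1) to absorb the passage from $A^{-1}\mathbf w_i$ to a positive combination, i.e. writing each $\mathbf w_i$ as a nonnegative combination of the columns $A e_j$), we can arrange that $(\partial_s f)\circ A = \partial_{\tilde s} g$ for a differential operator $\partial_{\tilde s}$ with nonnegative-coefficient symbol built from $s$ via a substitution of the form $x\mapsto Bx$ with $B$ having nonnegative entries. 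By Theorem~\ref{thm:linear-trafo}, $\tilde s = s(Bx)$ is again dually Lorentzian; hence Theorem~\ref{thm:operator-criterion} applies and $\partial_{\tilde s}g$ is Lorentzian, i.e. $(\partial_s f)\circ A$ is Lorentzian, i.e. $\partial_s f$ is $C$-Lorentzian. This handles the non-strict case.

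For the strict case I would invoke Lemma~\ref{lemma:SurjStrictly} in the translated picture: $\partial_{\tilde s}$ is a nonzero homogeneous differential operator, hence surjective on homogeneous pieces by Lemma~\ref{lemma:surjectivity}, so by Lemma~\ref{lemma:SurjStrictly} it carries strictly Lorentzian polynomials to strictly Lorentzian polynomials; pulling back along $A$ (a linear isomorphism, hence open, and preserving the relevant interiors) shows $\partial_s$ sends strictly $C$-Lorentzian polynomials to strictly $C$-Lorentzian ones. Alternatively, once the non-strict statement is known, strictness follows formally because $\partial_s$ restricted to homogeneous polynomials of the appropriate degree is a surjective linear map preserving the $C$-Lorentzian cone, and strictly $C$-Lorentzian means interior, so one argues exactly as in Lemma~\ref{lemma:SurjStrictly} with $\mathrm L$ replaced by the $C$-Lorentzian cone. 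The main obstacle I anticipate is purely bookkeeping: correctly handling the non-simplicial cone $C$ (so that "pull back to Lorentzian" is legitimate) and verifying that the intertwining identity $(\partial_s f)\circ A=\partial_{\tilde s}(f\circ A)$ holds with $\tilde s$ of the form $s(Bx)$, $B\geq 0$, so that Theorem~\ref{thm:linear-trafo} is genuinely applicable rather than needing a fresh argument. Everything else is a direct transcription of the orthant case.
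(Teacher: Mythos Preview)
Your reduction to the orthant case has a genuine gap, and it is not merely bookkeeping. The equivalence you assert --- that $f$ is $C$-Lorentzian if and only if $f\circ A$ is Lorentzian for a \emph{single} linear isomorphism $A$ with $A(\RR^n_{>0})\subset C$ --- is false in the direction you actually need. From $f\circ A$ Lorentzian you only get that $f$ is $A(\RR^n_{>0})$-Lorentzian, and since $A(\RR^n_{>0})$ is in general a proper simplicial subcone of $C$, this is strictly weaker than $C$-Lorentzian (the defining conditions \eqref{item:P}--\eqref{item:HR} must be verified for \emph{all} tuples of directions in $C$, not just those in a simplicial slice). Your proposed fix, that ``the directional-derivative conditions only need to be tested on a spanning set of directions,'' is not correct: condition \eqref{item:HR} is a statement about the signature of a family of quadratic forms indexed by tuples $(\mathbf v_1,\dots,\mathbf v_{d-2})\in C^{d-2}$, and knowing the signature for tuples drawn from a spanning subcone does not propagate to the rest of $C$. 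The same obstruction bites a second time in your intertwining step: writing each $\mathbf w_i$ as a nonnegative combination of the columns $A e_j$ requires $\mathbf w_i\in A(\RR^n_{\geq 0})$, which again need not hold.

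The paper avoids both problems by not fixing a single $A$. It uses Lemma~\ref{lemma:CLorentzian}, which characterizes $C$-Lorentzianity of $\partial_s f$ by the requirement that $(\partial_s f)(t_1\mathbf v_1+\cdots+t_d\mathbf v_d)$ be Lorentzian for \emph{every} choice of $\mathbf v_1,\dots,\mathbf v_d\in C$. For each such tuple one forms
$g(t_1,\dots,t_{d+m})=f\big(\sum_{i\le d} t_i\mathbf v_i+\sum_{j\le m} t_{d+j}\mathbf w_j\big)$,
which is Lorentzian since the associated linear map sends $\RR^{d+m}_{>0}$ into $C$. Because the $\mathbf w_j$ are \emph{built into} the columns, the operator $\partial_s$ becomes literally $s(\partial_{t_{d+1}},\dots,\partial_{t_{d+m}})$ on $g$, so Theorem~\ref{thm:operator-criterion} applies directly and no substitution $\tilde s=s(Bx)$ (and hence no appeal to Theorem~\ref{thm:linear-trafo}) is needed. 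Setting the last $m$ variables to $0$ and then invoking Lemma~\ref{lemma:CLorentzian} gives the result. The strict case is then handled exactly as you suggest, via Lemma~\ref{lemma:surjectivity} and openness.
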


For the proof we need the following facts about $C$-Lorentzian polynomials.

\begin{lemma}[{\cite[Proposition 8.2]{BrandenLeake:CLorentzian}}]\label{lemma:Corthant}
	Let $f\in \RR[x_1,\ldots, x_n]$ be a homogeneous polynomial of degree $d\geq0$. Then $f$ is $\RR^n_{>0}$-Lorentzian if and only if $f$ is Lorentzian.
\end{lemma}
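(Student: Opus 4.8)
The plan is to prove the two implications of Lemma~\ref{lemma:Corthant} separately, reducing in each case to the second-order conditions and to the degree-two case, and using throughout that differentiating by a directional derivative $D_{\mathbf v}$ with $\mathbf v$ in the closed orthant $\{x\in\RR^n:x\geq 0\}$ preserves both classes: for Lorentzian polynomials this is \cite[Cor.~2.11]{BrandenHuh} (and also follows within our setup from Theorem~\ref{thm:changeVar}(1), Proposition~\ref{prop:truncation}, and preservation of the Lorentzian property under the coordinate operators $\partial_i$), and for $\RR^n_{>0}$-Lorentzian polynomials it is part of the basic theory of \cite{BrandenLeake:CLorentzian}. The cases $d\leq 1$ are immediate from the definitions (in degree one both notions amount to having non-negative coefficients).

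So assume $d\geq 2$, and suppose first that $f$ is Lorentzian and non-zero. For $\mathbf v_1,\dots,\mathbf v_d\in\RR^n_{>0}$ the number $D_{\mathbf v_1}\cdots D_{\mathbf v_d}f$ is a sum of the coefficients of $f$ weighted by strictly positive reals, hence positive, which is condition~\eqref{item:P}. For condition~\eqref{item:HR}, fix $\mathbf v_1,\dots,\mathbf v_{d-2}\in\RR^n_{>0}$; iterating preservation under $D_{\mathbf v}$ shows that $q:=D_{\mathbf v_1}\cdots D_{\mathbf v_{d-2}}f$ is Lorentzian of degree two, so it has non-negative coefficients and at most one positive eigenvalue. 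If it had none, then $q$ would be a negative semidefinite symmetric matrix with non-negative entries, hence zero (its diagonal vanishes, and then so do its off-diagonal entries by looking at $2\times2$ principal minors), contradicting~\eqref{item:P}. Thus $q$ has exactly one positive eigenvalue, and $f$ is $\RR^n_{>0}$-Lorentzian.

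Conversely, suppose $f$ is $\RR^n_{>0}$-Lorentzian of degree $d\geq 2$. Letting the $\mathbf v_k$ tend to coordinate vectors in~\eqref{item:P} shows $f$ has non-negative coefficients, and doing the same in~\eqref{item:HR}, together with closedness of the set of symmetric matrices with at most one positive eigenvalue, shows that $\partial^\alpha f$ has at most one positive eigenvalue for every $|\alpha|=d-2$; more generally the same continuity argument gives that every order-$(d-2)$ directional derivative of $f$ in orthant directions has at most one positive eigenvalue. It remains to prove that $\supp(f)$ is M-convex, after which $f$ is Lorentzian by definition. In degree two this is a direct consequence of Lemma~\ref{lem:sylvester-criterion} by an elementary computation: if the support of a non-negative quadratic form fails M-convexity one exhibits a small principal minor violating~\eqref{item:minor-sign} (equivalently, a low-dimensional subspace restriction with two positive eigenvalues), so the form has a second positive eigenvalue. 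For $d\geq 3$ I would argue by descent: if $\supp(f)$ were not M-convex, pick $\beta,\beta'\in\supp(f)$ and an index $i$ witnessing this; writing $\delta=\beta\wedge\beta'$ for the coordinatewise minimum, one checks that $|\delta|\leq d-2$ (otherwise the exchange $\beta'=\beta-e_i+e_j$ would already lie in $\supp(f)$) and that $\supp(\partial^\delta f)$ still fails M-convexity through the pair $\beta-\delta,\beta'-\delta$. Differentiating further — alternating coordinate and generic positive directions, and invoking the at-most-one-positive-eigenvalue property above at each stage — one reduces to a degree-two polynomial with non-M-convex support and at most one positive eigenvalue, contradicting the degree-two case. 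Hence $\supp(f)$ is M-convex and $f$ is Lorentzian.

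The main obstacle is exactly this last step: extracting the combinatorial M-convexity of $\supp(f)$ from the analytic conditions~\eqref{item:P},\eqref{item:HR}, and in particular making the degree-reducing descent go through while keeping control of the support at each stage. The conditions involving only coordinate directions are genuinely insufficient — for instance $x_1^2x_2^2+x_3^4$ has the property that $\partial^\alpha f$ has at most one positive eigenvalue for every coordinate $\partial^\alpha$ of degree $d-2$, yet its support is not M-convex (and $(\partial_1+\partial_2+\partial_3)^2 f$ has two positive eigenvalues), so it is not Lorentzian; one must therefore use $\RR^n_{>0}$-Lorentzianity in all directions of the orthant. Since this descent is precisely the content of \cite[Proposition~8.2]{BrandenLeake:CLorentzian}, in the text we simply quote it.
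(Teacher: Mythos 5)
The paper gives no proof of this lemma at all---it is simply quoted from Br\"and\'en--Leake, Proposition~8.2---so your proposal, which verifies the easy implication directly and then (correctly identifying M-convexity of the support as the only genuinely hard content) defers that step to the same citation, matches the paper's treatment. Your argument for the forward direction and your cautionary example showing that coordinate directions alone cannot detect M-convexity are both sound.
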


\begin{lemma}[{\cite[Proposition 8.11]{BrandenLeake:CLorentzian}}]\label{lemma:CLorentzian}
	 Let $C\subset \RR^n$ be an open convex cone and let $f\in \RR[x_1,\ldots, x_n]$ be homogeneous of degree $d$. 
	 The following are equivalent:
	 \begin{enumerate}
	 	\item $f$ is $C$-Lorentzian.
	 	\item for all $\mathbf v_1,\ldots, \mathbf v_d\in C$ the polynomial $f(t_1\mathbf v_1+ \cdots + t_d \mathbf v_d)\in \RR[t_1,\ldots, t_d]$ is Lorentzian.
	 \end{enumerate}
\end{lemma}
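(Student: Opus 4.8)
The plan is to reduce Theorem~\ref{thm:invarianceC} to the ordinary Lorentzian statements already proved (Theorem~\ref{thm:operator-criterion} and Theorem~\ref{thm:strictly-lorentzian}) by repeatedly using the two characterizations of $C$-Lorentzian polynomials recalled in Lemma~\ref{lemma:Corthant} and Lemma~\ref{lemma:CLorentzian}. Since $s^\vee$ is Lorentzian, $s$ is homogeneous, say of degree $d$, and has non-negative coefficients that are not all zero; the case $d=0$ is trivial, so assume $d\ge 1$. We may also assume $C\ne\RR^n$, since otherwise there are no $C$-Lorentzian polynomials of positive degree and nothing to prove; in particular $0\notin C$.

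The key auxiliary step is the following: if $g\in\RR[z_1,\ldots,z_n]$ is $C$-Lorentzian and $\mathbf a_1,\ldots,\mathbf a_N\in C$ are arbitrary, then $P(w):=g(w_1\mathbf a_1+\cdots+w_N\mathbf a_N)\in\RR[w_1,\ldots,w_N]$ is Lorentzian. To see this, write $e=\deg g$; by Lemma~\ref{lemma:Corthant} it is enough to show that $P$ is $\RR^N_{>0}$-Lorentzian, and by Lemma~\ref{lemma:CLorentzian} for this it is enough that $P(t_1\mathbf c_1+\cdots+t_e\mathbf c_e)$ be Lorentzian for all $\mathbf c_1,\ldots,\mathbf c_e\in\RR^N_{>0}$. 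But that polynomial equals $g(t_1\mathbf b_1+\cdots+t_e\mathbf b_e)$ with $\mathbf b_p=\sum_l c_{p,l}\mathbf a_l\in C$, so it is Lorentzian by Lemma~\ref{lemma:CLorentzian} applied to $g$. (Here $P\ne 0$ and $\deg P=e$, since $g$ is positive on $C$.)

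For the non-strict statement, let $f$ be $C$-Lorentzian; if $\deg f<d$ then $\partial_s f=0$, so assume $\deg f\ge d$ and set $k=\deg f-d$. By Lemma~\ref{lemma:CLorentzian} it suffices to show $(\partial_s f)(t_1\mathbf v_1+\cdots+t_k\mathbf v_k)$ is Lorentzian for all $\mathbf v_1,\ldots,\mathbf v_k\in C$. I would introduce auxiliary variables $r_1,\ldots,r_m$ and put
\[Q(r,t)=f\Bigl(\textstyle\sum_{i=1}^m r_i\mathbf w_i+\sum_{j=1}^k t_j\mathbf v_j\Bigr)\in\RR[r_1,\ldots,r_m,t_1,\ldots,t_k],\]
which is Lorentzian by the auxiliary step applied to the $m+k$ vectors $\mathbf w_1,\ldots,\mathbf w_m,\mathbf v_1,\ldots,\mathbf v_k\in C$. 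By the chain rule $\partial_r^\beta Q=\bigl(D_{\mathbf w_1}^{\beta_1}\cdots D_{\mathbf w_m}^{\beta_m}f\bigr)\bigl(\sum_i r_i\mathbf w_i+\sum_j t_j\mathbf v_j\bigr)$, and writing $s=\sum_\beta\lambda_\beta x^\beta$ this gives $s(\partial_{r_1},\ldots,\partial_{r_m})Q=(\partial_s f)\bigl(\sum_i r_i\mathbf w_i+\sum_j t_j\mathbf v_j\bigr)$, which on setting $r=0$ equals $(\partial_s f)(\sum_j t_j\mathbf v_j)$. Since $s$ is still dually Lorentzian when regarded in $\RR[r_1,\ldots,r_m,t_1,\ldots,t_k]$ (adjoining variables not occurring in $s$ preserves being Lorentzian, hence dually Lorentzian), Theorem~\ref{thm:operator-criterion} shows $s(\partial_{r_1},\ldots,\partial_{r_m})$ preserves the Lorentzian property, so $s(\partial_r)Q$ is Lorentzian; setting $r=0$ is a truncation and hence preserves the Lorentzian property by Proposition~\ref{prop:truncation} (and a Lorentzian polynomial in which some variables do not occur is Lorentzian in the remaining ones). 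Thus $\partial_s f$ is $C$-Lorentzian; it is moreover non-zero, since $D_{\mathbf v_1}\cdots D_{\mathbf v_k}(\partial_s f)=\sum_\beta\lambda_\beta D_{\mathbf v_1}\cdots D_{\mathbf v_k}D_{\mathbf w_1}^{\beta_1}\cdots D_{\mathbf w_m}^{\beta_m}f>0$ for $\mathbf v_1,\ldots,\mathbf v_k\in C$.

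For the strict statement I would argue as in Theorem~\ref{thm:strictly-lorentzian}: $\partial_s=s(D_{\mathbf w_1},\ldots,D_{\mathbf w_m})$ is a non-zero homogeneous differential operator of degree $d$. Indeed its symbol is $\sigma(\xi)=s(\langle\mathbf w_1,\xi\rangle,\ldots,\langle\mathbf w_m,\xi\rangle)$, and since the convex hull of $\mathbf w_1,\ldots,\mathbf w_m$ lies in $C$ and hence avoids $0$, Gordan's theorem yields $\xi_0$ with $\langle\mathbf w_i,\xi_0\rangle>0$ for all $i$, so that $\sigma(\xi_0)>0$ because $s$ has non-negative, not-all-zero coefficients. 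By Lemma~\ref{lemma:surjectivity} the map $\partial_s\colon H_n^{\deg f}\to H_n^{\deg f-d}$ is then surjective, hence open, so it sends the interior of the set of $C$-Lorentzian polynomials into the interior, i.e.\ strictly $C$-Lorentzian polynomials to strictly $C$-Lorentzian polynomials; combined with the non-strict case this finishes the proof. I expect the main obstacle to be the auxiliary step — finding the right way to realize a $C$-Lorentzian polynomial precomposed with a linear map built from vectors of $C$ as a genuine Lorentzian polynomial, so that Theorem~\ref{thm:operator-criterion} applies — together with the bookkeeping of introducing the variables $r_i$ and then specializing them to $0$; the non-vanishing of $\partial_s$ is a small but indispensable separate point for the strict case.
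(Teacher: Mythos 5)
Your proposal does not prove the statement under review. The statement is Lemma~\ref{lemma:CLorentzian} (the equivalence: $f$ is $C$-Lorentzian if and only if $f(t_1\mathbf v_1+\cdots+t_d\mathbf v_d)$ is Lorentzian for all $\mathbf v_1,\ldots,\mathbf v_d\in C$), which the paper does not prove itself but quotes from Br\"and\'en--Leake (Proposition~8.11). What you wrote is instead a proof of Theorem~\ref{thm:invarianceC}, and as a proof of the lemma it is circular: you invoke Lemma~\ref{lemma:CLorentzian} explicitly at least three times (to reduce the ``auxiliary step'' to checking $\RR^N_{>0}$-Lorentzianness, to conclude that $g(t_1\mathbf b_1+\cdots+t_e\mathbf b_e)$ is Lorentzian, and again to reduce the main claim to restrictions along $\mathbf v_1,\ldots,\mathbf v_k$). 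The closest you come to the lemma itself is the auxiliary step, which is essentially the implication (1)$\Rightarrow$(2) with $N$ vectors instead of $d$, but its justification again rests on the lemma, so nothing is gained; and the implication (2)$\Rightarrow$(1) is never addressed at all.

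To actually prove the lemma you would have to work from Definition~\ref{def:c-lorentzian}. For (1)$\Rightarrow$(2) one pulls back along the linear map $A(t)=\sum_j t_j\mathbf v_j$: positivity of all order-$d$ directional derivatives of the pullback in coordinate directions follows from property (1) of the definition since $A(e_j)=\mathbf v_j\in C$, M-convexity of the support and the one-positive-eigenvalue condition for the Hessians $\partial^\alpha$ must then be deduced (this is where the real content of Br\"and\'en--Leake's argument lies, e.g.\ via Lemma~\ref{lemma:ACLorentzian} together with Lemma~\ref{lemma:Corthant}). For (2)$\Rightarrow$(1) one must recover, for arbitrary $\mathbf v_1,\ldots,\mathbf v_{d-2}\in C$, that the full bilinear form $(\mathbf x,\mathbf y)\mapsto D_{\mathbf x}D_{\mathbf y}D_{\mathbf v_1}\cdots D_{\mathbf v_{d-2}}f$ on $\RR^n$ has exactly one positive eigenvalue, knowing only its restrictions to spans of $d$ vectors from $C$; this requires an argument (e.g.\ letting the remaining two vectors range over $C$ and using that $C$ is open, so such restrictions detect any two-dimensional positive subspace up to a limit), which is absent from your write-up. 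So the gap is not a technical slip but a mismatch of target: you proved a downstream theorem assuming the lemma, rather than the lemma itself.
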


\begin{lemma}[{\cite[Proposition 2.6]{BrandenLeake:CLorentzian}}]\label{lemma:ACLorentzian} Let $A\colon \RR^m\to \RR^n$ be a linear map. If 
	 $f\in \RR[x_1,\ldots, x_n]$ is $C$-Lorentzian, then $g( x) = f(A x)$ is $A^{-1} C$-Lorentzian.
\end{lemma}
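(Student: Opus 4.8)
The statement to prove is Lemma~\ref{lemma:ACLorentzian}: if $f$ is $C$-Lorentzian and $A\colon\RR^m\to\RR^n$ is linear, then $g(x):=f(Ax)$ is $A^{-1}C$-Lorentzian. The plan is to verify the two conditions of Definition~\ref{def:c-lorentzian} for $g$ directly. First some bookkeeping: $A^{-1}C=\{v\in\RR^m:Av\in C\}$ is open (preimage of an open set under a continuous map), convex, and a cone, hence a legitimate domain for the definition; if $A^{-1}C=\emptyset$ the two conditions are vacuous, so I assume it is nonempty. If $\deg f=0$ then $g=f$ is the same non-negative constant and the claim is immediate, so assume $d:=\deg f\ge 1$; then $g(\lambda x)=\lambda^d g(x)$, so $g$ is homogeneous of degree $d$ (possibly the zero polynomial for now). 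The only computational tool needed is the chain rule for directional derivatives, $D_w(f\circ A)=(D_{Aw}f)\circ A$ for $w\in\RR^m$, which iterates to $D_{w_1}\cdots D_{w_k}(f\circ A)=(D_{Aw_1}\cdots D_{Aw_k}f)\circ A$.

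First I would check the positivity condition. For $w_1,\dots,w_d\in A^{-1}C$ the iterated chain rule with $k=d$ gives $D_{w_1}\cdots D_{w_d}g=D_{Aw_1}\cdots D_{Aw_d}f$ (composing with $A$ does nothing here, as this is already a constant), and each $Aw_i\in C$, so the right-hand side is positive because $f$ is $C$-Lorentzian. In particular $g\not\equiv 0$, so $g$ is genuinely homogeneous of degree $d$.

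Next I would handle the eigenvalue condition (only relevant for $d\ge 2$). Fix $w_1,\dots,w_{d-2}\in A^{-1}C$. By the iterated chain rule the symmetric bilinear form $Q_g(x,y):=D_xD_yD_{w_1}\cdots D_{w_{d-2}}g$ equals $Q_f(Ax,Ay)$, where $Q_f(u,v):=D_uD_vD_{Aw_1}\cdots D_{Aw_{d-2}}f$ is the bilinear form attached to $f$ and the directions $Aw_1,\dots,Aw_{d-2}\in C$; since $f$ is $C$-Lorentzian, $Q_f$ has exactly one positive eigenvalue. I would then argue the two inequalities separately, using that the number of positive eigenvalues of a symmetric form equals the largest dimension of a subspace on which it is positive definite. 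For the upper bound: if $Q_g$ is positive definite on a subspace $W\subseteq\RR^m$, then $A$ is injective on $W$ (since $Ax=0$ forces $Q_g(x,x)=Q_f(0,0)=0$), so $A(W)$ is a subspace of $\RR^n$ of dimension $\dim W$ on which $Q_f$ is positive definite, whence $\dim W\le 1$. For the lower bound: picking any $w\in A^{-1}C$, $Q_g(w,w)=D_{Aw}D_{Aw}D_{Aw_1}\cdots D_{Aw_{d-2}}f>0$, since this is a product of $d$ directional derivatives in the directions $Aw,Aw,Aw_1,\dots,Aw_{d-2}$, all lying in $C$. Hence $Q_g$ has exactly one positive eigenvalue, completing the verification.

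There is no deep obstacle here; the only non-formal point is the min--max characterization of the signature of a symmetric form together with the care needed around $\ker A$, both handled above. As a shortcut one can instead invoke Lemma~\ref{lemma:CLorentzian}: for $w_1,\dots,w_d\in A^{-1}C$ one has $g(t_1w_1+\cdots+t_dw_d)=f(t_1Aw_1+\cdots+t_dAw_d)$, which is Lorentzian in $t$ by the implication $(1)\Rightarrow(2)$ of that lemma applied to $f$ (as $Aw_i\in C$); since $g$ has degree $d$ (nonzero because $g(w)=f(Aw)>0$ for $w\in A^{-1}C$) and this holds for all admissible $w_i$, the implication $(2)\Rightarrow(1)$ of Lemma~\ref{lemma:CLorentzian} applied to $g$ yields that $g$ is $A^{-1}C$-Lorentzian. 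I expect the main care in either route to be simply keeping the degenerate cases ($d\le 1$, $A^{-1}C=\emptyset$, $\ker A\neq 0$) straight.
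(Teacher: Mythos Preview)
The paper does not supply its own proof of this lemma; it is simply quoted from \cite[Proposition~2.6]{BrandenLeake:CLorentzian} without argument. Your direct verification is correct and self-contained: the chain-rule identity $D_{w_1}\cdots D_{w_k}(f\circ A)=(D_{Aw_1}\cdots D_{Aw_k}f)\circ A$ reduces both conditions of Definition~\ref{def:c-lorentzian} for $g$ to the corresponding conditions for $f$, and your Sylvester-type argument (positive index of $Q_g$ bounded above by that of $Q_f$ via injectivity of $A$ on any positive-definite subspace, bounded below by evaluating at a single $w\in A^{-1}C$) correctly handles the possibility $\ker A\neq 0$. The alternative route through Lemma~\ref{lemma:CLorentzian} is also valid and slightly slicker, though since that lemma is likewise only cited here, the direct argument has the advantage of being fully independent of the external reference.
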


\begin{proof}[Proof of Theorem~\ref{thm:invarianceC}]
	Let $f\in  \RR[x_1,\ldots, x_n]$ be $C$-Lorentzian and let $\mathbf v_1,\ldots, \mathbf v_d\in C$. By Lemma~\ref{lemma:ACLorentzian} the polynomial 
	$$ g(t_1,\ldots, t_{d+m})= f(t_1\mathbf v_1+ \cdots + t_d \mathbf v_d+ t_{d+1}\mathbf w_1+ \cdots + t_{d+m} \mathbf w_m)$$
	is $A^{-1} C$-Lorentzian, where $A$ is the linear map $A( t_1,\ldots, t_{d+m}) = t_1\mathbf v_1+ \cdots + t_d \mathbf v_d+ t_{d+1}\mathbf w_1+ \cdots + t_{d+m} \mathbf w_m$. Since $A^{-1}C$ contains the positive orthant, Lemma~\ref{lemma:Corthant} implies that $g$ is Lorentzian.
	Put  $P= s(\partial_{t_{d+1}},\ldots, \partial_{t_{d+m}})\in \RR[\partial_{t_1},\ldots, \partial_{t_{d+m}}]$ and observe that  
	$$(\partial_s f)(t_1\mathbf v_1+ \cdots + t_d \mathbf v_d) =   P g  ( t_1,\ldots, t_d, 0,\ldots, 0)\in \RR[t_1,\ldots, t_d].$$
	The polynomial on the right-hand side is Lorentzian by Theorem~\ref{thm:operator-criterion}. Thus by Lemma~\ref{lemma:CLorentzian} we have that $\partial_s f$ is $C$-Lorentzian, which proves the theorem for $C$-Lorentzian polynomials.

	Finally just as in the proof of Theorem~\ref{thm:strictly-lorentzian}, we use Lemma~\ref{lemma:surjectivity} to obtain the statement for strictly $C$-Lorentzian polynomials.
\end{proof}

We conclude this section with a characterization of strictly $C$-Lorentzian polynomials.
If $C$ is an open convex cone, we denote by $S_C$ the set of unit vectors which generate extreme rays of the closure of $C$.  The \emph{lineality space} $L$ of $C$ is the largest linear space contained in $\overline{C}$.   We let $S_C/L$ denote set of all vectors in $S_C$ that are orthogonal to $L$. We remark that we will apply the following theorem in Section~\ref{sec:applications} only to cones with trivial lineality space.

\begin{theorem}[{\cite[Theorem 8.14]{BrandenLeake:CLorentzian}}] \label{thm:strictlyClorentzian}
	Let $C$ be an open convex cone in $\RR^n$ with lineality space $L$ and let $f\in \RR[x_1,\ldots, x_n]$ be a homogeneous polynomial of degree $d\geq 2$. Then $f$ is strictly $C$-Lorentzian if and only if the following two properties hold:
	\begin{enumerate}
		\item  $D_{\mathbf v_1} \cdots D_{\mathbf v_d}f > 0$  for $\mathbf v_1, \ldots, \mathbf v_d \in S_C/L$ and  
		\item  for $\mathbf v_1, \ldots, \mathbf v_{d-2} \in S_C/L$ the bilinear form $$(\mathbf x,\mathbf y) \mapsto  D_{\mathbf x}D_{\mathbf y}D_{\mathbf v_1} \cdots D_{\mathbf v_{d-2}}f$$ has exactly one positive  eigenvalue and its radical is $L$.\label{item:HR2}
	\end{enumerate}
\end{theorem}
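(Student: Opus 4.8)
The plan is to deduce Theorem~\ref{thm:strictlyClorentzian} from the already available characterization of strict $C$-Lorentzianity as lying in the interior of the $C$-Lorentzian cone, together with the transfer-to-Lorentzian technology of Lemmas~\ref{lemma:Corthant}--\ref{lemma:ACLorentzian}. First I would note that both directions of the claimed equivalence are statements about how the defining inequalities of Definition~\ref{def:c-lorentzian} degenerate on the boundary of the cone, so the natural strategy is to compare $f$ with the (easier) case where $C$ is an orthant: by choosing $d$ vectors from $S_C/L$ and applying Lemma~\ref{lemma:ACLorentzian} with the linear map $A$ whose columns are those generators, the pullback $f\circ A$ becomes a polynomial in finitely many variables that is $A^{-1}C$-Lorentzian, and $A^{-1}C$ contains the positive orthant, so by Lemma~\ref{lemma:Corthant} one reduces to a question about Lorentzian polynomials in the ordinary sense. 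The strictness/interior condition then becomes a question about the interior of $\mathrm L^d_k$, which by Lemma~\ref{lem:interiorL} is exactly $\sL^d_k$, i.e. full-support plus signature $(+,-,\dots,-)$ of all $(d-2)$-nd directional Hessians.

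Next I would handle the two implications separately. For the forward direction, suppose $f$ is strictly $C$-Lorentzian, i.e. in the interior of the $C$-Lorentzian cone. Property (1) is a strict positivity statement that is open, so it survives passing to the boundary generators $\mathbf v_i\in S_C/L$ by a limiting argument (the positivity on all of $C$ plus continuity gives $\ge 0$ on the closure; interiority upgrades this to $>0$ on the extreme generators, since a zero there would let one perturb $f$ out of the cone). For property (2), the signature statement: interiority forces the $(d-2)$-nd Hessian bilinear form evaluated at generators $\mathbf v_i\in S_C/L$ to have signature exactly $(+,-,\dots,-)$ on $\RR^n/L$ — again because any extra zero eigenvalue transverse to $L$ could be perturbed away, contradicting interiority — while the radical must contain $L$ by the cone being invariant under translation by $L$ (so $D_{\ell}$ annihilates everything for $\ell\in L$), and it can be no larger by the signature count. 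For the converse, I would assume (1) and (2) and show $f$ lies in the interior: use that (1) and (2) for \emph{all} $\mathbf v_i\in C$ follow from the versions on $S_C/L$ by writing each $\mathbf v\in C$ as a non-negative combination of extreme generators modulo $L$ and using multilinearity (directional derivatives along $L$ being harmless), which gives $f\in C\text{-Lorentzian}$; then an openness/perturbation argument, relying on the fact that the defining conditions are finitely many strict inequalities on the compact set $(S_C/L)^d$ (and its lower analogue), shows a whole neighborhood of $f$ still satisfies them, hence $f$ is interior.

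The main obstacle I expect is the bookkeeping around the lineality space $L$: one must be careful that all the ``strictness'' really lives on the quotient $\RR^n/L$, that the relevant cone of $C$-Lorentzian polynomials is genuinely a cone with nonempty interior only after quotienting (polynomials constant along $L$-directions of degree $\ge 1$ cannot be strictly $C$-Lorentzian unless $L=0$, so implicitly $f$ must be non-degenerate transverse to $L$), and that the compactness argument for the converse uses $(S_C/L)$ which is compact because we took unit vectors orthogonal to $L$. Since this theorem is quoted verbatim from \cite[Theorem 8.14]{BrandenLeake:CLorentzian}, the cleanest route is simply to cite that reference; but were one to reprove it, the reduction-to-orthant plus Lemma~\ref{lem:interiorL} is the backbone, and the $L$-equivariance is the delicate point.

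\begin{proof}
This is \cite[Theorem 8.14]{BrandenLeake:CLorentzian}.
\end{proof}
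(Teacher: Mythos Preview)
Your proposal is correct and matches the paper exactly: Theorem~\ref{thm:strictlyClorentzian} is quoted verbatim from \cite[Theorem~8.14]{BrandenLeake:CLorentzian} and is not reproved in the paper, so the bare citation is precisely what is wanted. The sketch you gave beforehand is reasonable orientation but is not needed here.
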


We will also use the following consequence of the strict $C$-Lorentzian property.
\begin{proposition} \label{prop:strictlyClorentzian}
Let $f\in \RR[x_1,\ldots, x_n]$ be a homogeneous polynomial of degree $d\geq 2$. If $f$ is strictly $C$-Lorentzian, then for all $ \mathbf v_1, \ldots, \mathbf v_{d-2} \in C$ the bilinear form $$(\mathbf x,\mathbf y) \mapsto  D_{\mathbf x}D_{\mathbf y}D_{\mathbf v_1} \cdots D_{\mathbf v_{d-2}}f$$
is non-degenerate and has exactly one positive eigenvalue.
\end{proposition}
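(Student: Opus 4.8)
The plan is to reduce the statement to the characterization of strictly $C$-Lorentzian polynomials in Theorem~\ref{thm:strictlyClorentzian} together with a continuity/perturbation argument, since the bilinear form in question depends continuously on $(\mathbf v_1,\ldots,\mathbf v_{d-2})$ and on $f$. First I would note that, by Theorem~\ref{thm:strictlyClorentzian}, if $f$ is strictly $C$-Lorentzian then for $\mathbf v_1,\ldots,\mathbf v_{d-2}\in S_C/L$ the form $Q(\mathbf x,\mathbf y)=D_{\mathbf x}D_{\mathbf y}D_{\mathbf v_1}\cdots D_{\mathbf v_{d-2}}f$ has exactly one positive eigenvalue and radical exactly $L$. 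In the case relevant to the applications — and in fact the only case one needs to treat with full care — the lineality space $L$ is trivial, so this says $Q$ is non-degenerate with signature $(+,-,\ldots,-)$. The task is then to upgrade this from generators of extreme rays to arbitrary $\mathbf v_1,\ldots,\mathbf v_{d-2}\in C$.

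The key step is the following: fix $\mathbf v_1,\ldots,\mathbf v_{d-2}\in C$. Since $C$ is an open convex cone, each $\mathbf v_i$ can be written as a positive combination $\mathbf v_i=\sum_{k}\lambda_{ik}\mathbf u_{k}$ of finitely many vectors $\mathbf u_k\in S_C/L$ (one can take a finite subset of $S_C$ whose conic hull contains a neighborhood of each $\mathbf v_i$, using that $C$ is the interior of the conic hull of $S_C$ modulo $L$, together with a small perturbation into the relative interior so that all coefficients are strictly positive). Expanding $D_{\mathbf v_i}=\sum_k\lambda_{ik}D_{\mathbf u_k}$ multilinearly, we get
\[
Q(\mathbf x,\mathbf y)=\sum_{k_1,\ldots,k_{d-2}}\lambda_{1k_1}\cdots\lambda_{(d-2)k_{d-2}}\,D_{\mathbf x}D_{\mathbf y}D_{\mathbf u_{k_1}}\cdots D_{\mathbf u_{k_{d-2}}}f,
\]
a positive combination of bilinear forms each of which, by Theorem~\ref{thm:strictlyClorentzian}, has exactly one positive eigenvalue and radical $L$ (here I use that $S_C/L$ is invariant under the relevant index permutations, so every summand has its $\mathbf u$-arguments in $S_C/L$). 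A positive sum of symmetric bilinear forms each of signature $(+,-,\ldots,-)$ on $\RR^n/L$ again has at most one positive eigenvalue — this is immediate from, say, the variational characterization of eigenvalues, since the restriction of each summand to any $2$-dimensional subspace has a nonpositive eigenvalue. Combined with the strict positivity in property~(1) of Theorem~\ref{thm:strictlyClorentzian}, which forces $Q(\mathbf x,\mathbf x)>0$ for suitable $\mathbf x$ and hence exactly one positive eigenvalue, we get that $Q$ has signature $(+,\le 0,\ldots,\le 0)$ on $\RR^n/L$. It remains to rule out extra degeneracy: this is where I would use that each summand has radical exactly $L$ and that at least one of them is nondegenerate on $\RR^n/L$ — a vector $\mathbf z\perp L$ in the radical of the positive sum $Q$ would have to lie in the intersection of the null cones, and pairing $Q(\mathbf z,\cdot)=0$ against a vector realizing the positive direction of one nondegenerate summand gives a contradiction via the signature constraint. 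Since $L$ is in the radical of every summand it is in the radical of $Q$, so the radical of $Q$ is exactly $L$; when $L=0$ this is precisely nondegeneracy.

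The main obstacle I anticipate is the last point — controlling the radical of a positive sum of forms of signature $(+,-,\ldots,-)$ and showing it is no larger than the common radical $L$. The cheap resolution is to avoid the issue entirely in the generic case: perturb the $\mathbf v_i$ slightly so that they become positive combinations of $S_C/L$-vectors with all coefficients strictly positive, so that $Q$ is literally an average of forms of signature $(+,-,\ldots,-)$ on $\RR^n/L$, and then argue that non-degeneracy is an open condition that survives passing back to the original $\mathbf v_i$ by continuity. Alternatively, and perhaps more robustly, one can invoke the strict $C$-Lorentzian hypothesis more directly: since $f$ lies in the interior of the $C$-Lorentzian cone, one can pick a small open neighborhood of $f$ in $H_n^d$ consisting of $C$-Lorentzian polynomials, and for any $\mathbf v_1,\ldots,\mathbf v_{d-2}\in C$ and nonzero $\mathbf z\notin L$ perturb $f$ within this neighborhood to decrease $Q(\mathbf z,\mathbf z)$ strictly while preserving the single-positive-eigenvalue property; if $Q(\mathbf z,\cdot)\equiv 0$ this perturbation would produce a $C$-Lorentzian form with two positive eigenvalues, a contradiction. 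Either route is routine once set up; I would write the continuity argument as it is the shortest.
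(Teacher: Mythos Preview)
Your main approach contains a genuine error. The claim that ``a positive sum of symmetric bilinear forms each of signature $(+,-,\ldots,-)$ again has at most one positive eigenvalue'' is false, and the variational justification you give does not work: each $Q_i|_W$ having a nonpositive eigenvalue on a $2$-plane $W$ says nothing about $\sum Q_i|_W$, since the nonpositive eigenvectors differ from summand to summand. Concretely, $Q_1=x^2-y^2$ and $Q_2=x^2+6xy+8y^2$ both have signature $(+,-)$ and share the positive direction $(1,0)$, yet $Q_1+Q_2=2x^2+6xy+7y^2$ is positive definite. So the multilinear expansion through extreme rays cannot establish the eigenvalue count.

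More importantly, the whole detour is unnecessary: item~(\ref{item:HR}) of Definition~\ref{def:c-lorentzian} already says that for $\mathbf v_1,\ldots,\mathbf v_{d-2}\in C$ the form has \emph{exactly} one positive eigenvalue. Since strictly $C$-Lorentzian polynomials are in particular $C$-Lorentzian, that part is immediate; the only content of the proposition is non-degeneracy. Your alternative~1 (continuity) cannot supply this, because non-degeneracy is open, not closed, so knowing it for perturbed $\mathbf v_i$ does not give it for the original ones. Your alternative~2 is essentially the right idea, but you have the sign backwards: if $\mathbf z$ lies in the radical of $Q$ you want to perturb so that the zero eigenvalue becomes \emph{positive}, producing two positive eigenvalues and hence a non--$C$-Lorentzian form near $g$.

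The paper's proof is this last idea, executed cleanly at the level of $g=D_{\mathbf v_1}\cdots D_{\mathbf v_{d-2}}f$ rather than $f$: the linear map $f\mapsto g$ is surjective and preserves the $C$-Lorentzian property, hence is open, so $g$ itself lies in the interior of the $C$-Lorentzian quadratic forms. If $g$ were degenerate one could perturb it within that interior to a form with two positive eigenvalues, a contradiction.
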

\begin{proof}
	The polynomial $g=D_{\mathbf v_1} \cdots D_{\mathbf v_{d-2}}f$ lies in the interior of set of $C$-Lorentzian polynomials. The radical of the quadratic form corresponding  to $g$ must be zero, because otherwise  $g$ could be approximated by a quadratic form with more than one positive eigenvalue, which contradicts the fact that $g$ lies in the interior of the space of $C$-Lorentzian polynomials.  	
\end{proof}

\section{Applications}\label{sec:applications}

\subsection{A generalized AF inequality for mixed discriminants}

Let $V$ a real vector space of dimension $d$. If $q$ is a quadratic form on $V$ its determinant $\det q$ is an element of $(\Lambda^d V^*)^{\otimes 2}$. If $q_1,\ldots, q_m$ are quadratic forms we can expand the determinant as
$$ \det(x_1 q_1+\cdots +x_n q_n) = \sum_\alpha \frac{d!}{\alpha!} D(q^\alpha) x^\alpha$$
where the coefficients 
$$ D(q^\alpha)= D(\underbrace{q_1,\ldots, q_1}_{\alpha_1}, \ldots, \underbrace{q_n, \ldots, q_n}_{\alpha_n})= \frac{1}{d!} \partial^\alpha  \det\Big(\sum x_i q_i\Big)$$  
are known as the \emph{mixed discriminants}.
We may linearly extend this notation by setting
\[
D(f(q)):=\sum_\alpha \lambda_\alpha \cdot D(q^\alpha).
\]
for any homogeneous polynomial $f=\sum_\alpha \lambda_\alpha x^\alpha$ of degree $d$. Further we will write $$D(f(q),g(q)):=D(fg(q))$$ for homogeneous polynomials that factorise.

 We call elements of the form $\omega\otimes \omega\in (\Lambda^d V^*)^{\otimes 2}$ non-negative and  remark that if each $q_i$ is positive definite then $D(q^{\alpha})$ is strictly positive.    Note that if we fix an isomorphism $V\simeq \RR^d $ and identify $\Lambda^d V^*\simeq \RR$,   then  $q(x)= \sum x_ix_j a_{ij}$ can be identified with the matrix $A= (a_{ij})$ and $\det q = \det (A)$, which is positive as a real number if only if $\det(q)\in (\Lambda^d V^*)^{\otimes 2}\simeq \RR$ is positive.

The following is a classical form of the Alexandrov-Fenchel inequality (see, for example \cite[25.4.2]{Burago}).

\begin{theorem}[AF inequality for Mixed Discriminants] Let $V$ be a $d$-dimensional real vector space. 
	If $q_1,\ldots, q_{d-2}$ and  $q$ are positive definite quadratic forms on $V$ and  $p$ is an arbitrary quadratic form, then 
	\begin{equation} D(p,q, q_1,\ldots,q_{d-2})^2 \geq D(p,p, q_1,\ldots,q_{d-2}) D(q,q, q_1,\ldots,q_{d-2}) \label{eq:AFmixed}\end{equation}
	and equality holds if and only if $p$ is proportional to $q$. 
\end{theorem}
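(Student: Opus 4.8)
The plan is to deduce the classical Alexandrov--Fenchel inequality for mixed discriminants (the last stated theorem) directly from the Lorentzian framework, rather than reproving it from scratch. The key observation is that the determinant polynomial $\det(x_1 q_1 + \cdots + x_n q_n)$, suitably normalized, is a Lorentzian polynomial in the variables $x_1,\ldots,x_n$ whenever the $q_i$ are positive definite: indeed, it is a volume-type polynomial arising from a K\"ahler package (hyperbolicity of the determinant along the positive-definite cone is the Alexandrov--van der Corput result), so the polynomial $\sum_\alpha \frac{1}{\alpha!}D(q^\alpha)x^\alpha = N(\det(\sum x_i q_i))/d!$ lies in $\mathring{\mathrm L}^d_n$ — in fact it is strictly Lorentzian on the positive-definite cone in the $C$-Lorentzian sense (Definition~\ref{def:c-lorentzian}).

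First I would fix the $d$ positive definite forms $q, q_1,\ldots,q_{d-2}$ and the arbitrary form $p$, and introduce auxiliary variables $x_0, x_1,\ldots,x_{d-2}$ (plus one more for $p$ if needed). Consider the polynomial $P(x) = \det(x_0 q + x_1 q_1 + \cdots + x_{d-2}q_{d-2})$ in the cone $C$ of positive combinations; by the hyperbolicity of $\det$ this is $C$-Lorentzian, and since the $q_i$ are positive definite it is in fact strictly $C$-Lorentzian for $C$ the positive-definite cone. Then the mixed quantities $D(p,q,q_1,\ldots,q_{d-2})$ etc.\ are precisely second directional derivatives $\tfrac{1}{d!}D_p D_q \partial_{x_1}\cdots\partial_{x_{d-2}} P$ evaluated along the cone, so the bilinear form $(\mathbf a,\mathbf b)\mapsto D_{\mathbf a}D_{\mathbf b}\partial_{x_1}\cdots\partial_{x_{d-2}}P$ on the space of quadratic forms has exactly one positive eigenvalue by the defining property~\eqref{item:HR} of $C$-Lorentzian polynomials, and by Proposition~\ref{prop:strictlyClorentzian} it is in fact non-degenerate with signature $(+,-,\ldots,-)$. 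Applying Lemma~\ref{lemma:hodge} — specifically the equivalence of (4) with (3) — to this quadratic form $\hat q(p):=D(p,p,q_1,\ldots,q_{d-2})$ with the distinguished positive vector $q$ (note $\hat q(q)=D(q,q,q_1,\ldots,q_{d-2})>0$) immediately yields both the inequality $D(p,q,\ldots)^2\ge D(p,p,\ldots)D(q,q,\ldots)$ and the equality characterization: equality holds iff $p$ is proportional to $q$.

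The main obstacle — and the only genuine input beyond the machinery already set up in the paper — is establishing that $\det(\sum x_i q_i)$ really is (strictly) Lorentzian on the positive-definite cone, i.e.\ that the K\"ahler package for mixed discriminants holds. One route is to cite the classical hyperbolicity of the determinant polynomial (G\aa rding) together with the fact that hyperbolic polynomials with the appropriate positivity give Lorentzian polynomials; alternatively one invokes the known fact that $\det$ restricted to a generic line through the positive cone has only real roots and the associated Hodge--Riemann relations hold, which is exactly the content of the classical AF inequality for mixed discriminants in its weak (non-strict) form. To get the sharp equality case one needs the strict version, which follows from positive-definiteness of all the $q_i$ ensuring we are in the interior of the $C$-Lorentzian locus, so that Theorem~\ref{thm:strictlyClorentzian} and Proposition~\ref{prop:strictlyClorentzian} apply. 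Everything else — reducing the mixed discriminant inequality to a signature statement about a single quadratic form, and extracting the equality case — is a routine application of Lemma~\ref{lemma:hodge}.
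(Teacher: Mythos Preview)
The paper does not prove this theorem at all: it is stated as a classical fact with a reference to \cite[25.4.2]{Burago}, and then \emph{used} as input in the proof of the generalized version (Theorem~\ref{thm:mixeddiscriminants}). In the paper's logical flow, the classical AF inequality for mixed discriminants together with Lemma~\ref{lemma:hodge} is precisely what establishes that $\det$ is strictly $C$-Lorentzian on the positive-definite cone; the $C$-Lorentzian machinery is then invoked to propagate this to the version involving dually Lorentzian polynomials.

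Your proposal runs this logic backwards, and is circular as written. You want to deduce the inequality from the fact that $\det$ is strictly $C$-Lorentzian, but verifying condition~(\ref{item:HR}) of Definition~\ref{def:c-lorentzian} for $\det$ --- that the bilinear form $(p,q)\mapsto D(p,q,q_1,\ldots,q_{d-2})$ has exactly one positive eigenvalue --- is, via Lemma~\ref{lemma:hodge}, \emph{equivalent} to the inequality you are trying to prove (including its equality case). You acknowledge this yourself when you describe the second route as ``exactly the content of the classical AF inequality for mixed discriminants.'' So the Lorentzian framework of the paper does not furnish an independent proof here; it merely repackages the statement.

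Your first suggested route, via G{\aa}rding's theorem on hyperbolic polynomials, is a genuine and classical way to obtain the result independently, and would close the gap. But that is an external input of roughly the same strength as the theorem itself, not something derivable from the paper's machinery. If you want to go that way, state clearly that you are importing G{\aa}rding's result (hyperbolicity of $\det$ in the direction of any positive-definite form, hence the Lorentzian signature of the Hessian after $d-2$ directional derivatives along positive-definite forms), and then Lemma~\ref{lemma:hodge} finishes as you describe. Otherwise, the honest thing to do is what the paper does: cite the classical result.
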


\begin{theorem}[Generalized AF inequality for Mixed Discriminants]\label{thm:mixeddiscriminants} Let $V$ be a $d$-dimensional real vector space and $s$ be a non-zero dually Lorentzian polynomial of degree $d-2$ in $n$ variables.  Suppose $q_1,\ldots, q_{n}$, $q$ are positive definite quadratic forms on $V$ and  $p$ is an arbitrary quadratic form.  Then 
	\begin{equation} D(p,q, s(q_1,\ldots,q_n))^2 \geq D(p,p, s(q_1,\ldots,q_n)) D(q,q, s(q_1,\ldots,q_n)) \label{eq:generalAFmixedD}\end{equation}
and  equality holds if and only if $p$ is proportional to $q$. 
\end{theorem}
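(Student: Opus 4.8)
The plan is to reduce the inequality to the standard Alexandrov--Fenchel inequality for mixed discriminants by exhibiting $D(p, q, s(q_1,\ldots,q_n))$ as a value of a quadratic form obtained by differentiating a Lorentzian polynomial, and then invoking Lemma~\ref{lemma:hodge}. First I would recall that the polynomial
\[
P(x_0, x_1,\ldots, x_n) := \det(x_0 p_0 + x_1 q_1 + \cdots + x_n q_n),
\]
for any positive definite $p_0$ (for instance $p_0 = q$), is the volume polynomial of a Kähler-type package: by the classical AF inequality for mixed discriminants together with Lemma~\ref{lemma:hodge}, $P$ is Lorentzian (indeed after rescaling, strictly Lorentzian in the interior variables). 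More precisely I would work with $N^{-1}$-normalizations so that the coefficients of $\det(\sum x_i q_i)$ match the normalization convention for Lorentzian polynomials; the content is that positive definiteness of the $q_i$ plus the classical AF inequality gives exactly the ``at most one positive eigenvalue'' condition on all degree-$(d-2)$ derivatives, which is the Lorentzian property.

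Next, since $s$ is a non-zero dually Lorentzian polynomial of degree $d-2$, Theorem~\ref{thm:operator-criterion} (or its $C$-Lorentzian refinement, Theorem~\ref{thm:invarianceC}, applied with the cone spanned by the positive definite forms) tells us that the operator $\partial_s = s(\partial_{x_1},\ldots,\partial_{x_n})$ preserves the Lorentzian property. Applying $\partial_s$ to $P$ and then using the identity $D(f(q),g(q)) = D(fg(q))$ together with the definition of mixed discriminants as normalized derivatives of $\det(\sum x_i q_i)$, one sees that
\[
g(x_0, y_1, \ldots) \longmapsto D(\,\cdot\,, \,\cdot\,, s(q_1,\ldots,q_n))
\]
is (up to positive constants) exactly the quadratic form in the $x_0$-variable obtained by taking the remaining derivatives of $\partial_s P$. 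Because $\partial_s P$ is Lorentzian of degree $2$ in the relevant variables after restriction, it has at most one positive eigenvalue; since $s$ is non-zero and the $q_i$ are positive definite, $D(q,q,s(q_1,\ldots,q_n)) > 0$, so the form actually has signature $(+,-,\ldots,-)$ on the span of $p$ and $q$. Then Lemma~\ref{lemma:hodge}, part (3)$\Leftrightarrow$(4), yields both the inequality~\eqref{eq:generalAFmixedD} and the characterization of equality: $p$ proportional to $q$.

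The main obstacle I anticipate is twofold. First, bookkeeping the normalization: the mixed discriminant $D(q^\alpha)$ is defined with the factor $\frac{d!}{\alpha!}$ in front, so one must carefully track factorials to identify $D(p,q,s(q_1,\ldots,q_n))$ with an honest coefficient of a Lorentzian polynomial rather than merely something proportional to it — though since all constants involved are positive this does not affect signatures. Second, and more essential, is the equality case: merely knowing $\partial_s P$ is Lorentzian gives ``at most one positive eigenvalue'' but not strictness, so I would need to argue that the degree-$2$ form in $p, q$ is in fact non-degenerate on $\mathrm{span}(p,q)$. This I would get either by passing to strictly Lorentzian polynomials and Theorem~\ref{thm:strictly-lorentzian} (perturbing $P$ into the interior, using that the $q_i$ and $q$ are positive definite so $P$ is strictly Lorentzian in the interior variables), or directly via Lemma~\ref{lemma:hodge}: the hypothesis $q(y) > 0$-type positivity, here $D(q,q,s(q_1,\ldots,q_n)) > 0$, forces signature $(+,-,\ldots,-)$ on the subspace and hence the sharp equality condition. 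I would present the argument through the strict version to keep the equality analysis clean.
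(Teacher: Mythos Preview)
Your overall strategy matches the paper's: show the determinant polynomial is (strictly) Lorentzian in an appropriate sense, apply $\partial_s$, and invoke Lemma~\ref{lemma:hodge}. However, there is a genuine gap in how an \emph{arbitrary} quadratic form $p$ enters your argument.

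Your polynomial $P(x_0,\ldots,x_n)=\det(x_0 q + x_1 q_1 + \cdots + x_n q_n)$ lives on $\RR^{n+1}$ and never sees $p$. Applying $\partial_s$ yields a Lorentzian quadratic form on $\RR^{n+1}$, which is nothing but the restriction of the bilinear form $(f,g)\mapsto D(f,g,s(q_1,\ldots,q_n))$ to the subspace $\operatorname{span}(q,q_1,\ldots,q_n)\subset \Sym^2(V^*)$. Since $p$ need not lie in that subspace, this gives no information about $D(p,\cdot,s(q_1,\ldots,q_n))$. Your assertion that ``the form actually has signature $(+,-,\ldots,-)$ on the span of $p$ and $q$'' does not follow from anything you have established; the object $\partial_s P$ simply does not encode that span.

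The paper resolves this by working on the full space $\Sym^2(V^*)$ from the outset: the determinant is a degree-$d$ polynomial there, and the classical AF inequality for mixed discriminants (with its equality case), Lemma~\ref{lemma:hodge}, and Theorem~\ref{thm:strictlyClorentzian} together show it is \emph{strictly} $C$-Lorentzian for any open convex cone $C$ whose closure $\overline C$ lies inside the positive-definite cone. (The condition $\overline C\subset K$ matters: one needs the extreme rays of $\overline C$ to still be positive definite so that the hypotheses of Theorem~\ref{thm:strictlyClorentzian} are met.) Then Theorem~\ref{thm:invarianceC}, applied with $\mathbf w_i=q_i\in C$, shows that $\partial_s\det$ is strictly $C$-Lorentzian, whence by Proposition~\ref{prop:strictlyClorentzian} the bilinear form $(f,g)\mapsto D(f,g,s(q_1,\ldots,q_n))$ on \emph{all} of $\Sym^2(V^*)$ is non-degenerate with exactly one positive eigenvalue. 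Now Lemma~\ref{lemma:hodge} applies directly and yields both the inequality and the equality characterization for arbitrary $p$. You mention Theorem~\ref{thm:invarianceC} only parenthetically, but it is not optional: the ordinary Lorentzian framework on $\RR^{n+1}$ cannot reach $p$.

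On the equality case: your second proposed route (``$D(q,q,s(q_1,\ldots,q_n))>0$ forces signature $(+,-,\ldots,-)$'') fails both because of the gap above and because a single positive value cannot by itself exclude zero eigenvalues of the form. Strictness is essential, and your first option---passing through the strictly Lorentzian statement---is the one that works and is what the paper does.
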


\begin{proof} 

Let $K\subset \Sym^2(V^*)$ denote the cone of positive definite quadratic forms. Choose an open convex cone $C$ containing $q_1,\ldots, q_n$ with the property that its closure $\overline C$ is contained in $K$. To see the existence of  such a cone $C$ take a full dimensional polytope $P\subset K$  containing $q_1, \ldots, q_n$. Choose an  interior point $v\in \mathrm{int} P$ and let $C$ be the  cone generated $P=  (1+\epsilon)(\mathrm{int} P-v) +v$. By construction $C$ contains $p_1, \ldots, p_n$ and for  $\epsilon> 0$ sufficiently small  $\overline{C}\subset K$.

If $p, q,g_1,\ldots, g_{d-2}$ are  quadratic forms on $V$, then 
\begin{equation}\label{eq:symBil}(p,q)\mapsto  D_p D_q D_{g_1} \cdots D_{g_{d-2}} \det = d! D(p,q, q_1,\ldots, g_{d-2}).\end{equation}
If $g_1,\ldots, g_{d-2}$ are positive definite, then the Alexandrov-Fenchel inequality for mixed discriminants and Lemma~\ref{lemma:hodge} imply that the symmetric billinear form \eqref{eq:symBil} is non-degenerate and  has exactly one positive eigenvalue.
Moreover, the mixed discriminant is positive if all forms are positive definite.   Theorem~\ref{thm:strictlyClorentzian} now implies that the determinant is strictly $C$-Lorentzian for the cone $C$ constructed above.

Put $\partial_s= s(D_{q_1}, \ldots, D_{q_n})$. 
By Theorem~\ref{thm:invarianceC} we have that the polynomial
$$ f(q) = \partial_s \det = \frac{d!}{2} D(q^2, s(q_1,\ldots,q_n))$$
is strictly $C$-Lorentzian.  Proposition~\ref{prop:strictlyClorentzian} implies that the bilinear form 
$$(f,g)\mapsto D(f,g, s(q_1,\ldots,q_n))$$
is non-degenerate and has precisely one positive eigenvalue.  Lemma~\ref{lemma:hodge} yields the desired inequality together with the characterization of equality cases.
\end{proof}

%
%
%

\subsection{A generalized AF inequality for K\"ahler classes}

\begin{theorem}[A generalized AF inequality for K\"ahler classes]
	\label{thm:hodge-riemann}
	Let $Y$ be a smooth complex manifold of dimension $d$ and $\omega_1, \ldots, \omega_m,\alpha$ be K\"ahler classes on $Y$.  Then for every non-zero dually Lorentzian polynomial $s$ of degree $d-2$ and all $\beta\in   H^{1,1}_\RR(Y)$
	$$\left(\int_Y \alpha \beta s(\omega_1,\ldots,\omega_m) \right)^2 \ge \int_Y \alpha^2 s(\omega_1,\ldots,\omega_m)\int_Y \beta^2 s(\omega_1,\ldots,\omega_m)$$
and equality holds if and only if $\alpha$ is proportional to $\beta$.
\end{theorem}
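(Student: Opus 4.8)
The plan is to transcribe, almost verbatim, the argument used for Theorem~\ref{thm:mixeddiscriminants}, with the determinant on $\Sym^2(V^*)$ replaced by the volume polynomial $P(\omega)=\int_Y\omega^d$ on $H^{1,1}_\RR(Y)$, and the Alexandrov--Fenchel inequality for mixed discriminants replaced by the mixed Hodge--Riemann bilinear relations on a compact K\"ahler manifold (we take $Y$ compact so that the integrals make sense). The two classical inputs I would record first are: (i) if $\omega_1,\ldots,\omega_d$ are K\"ahler classes then $\int_Y\omega_1\cdots\omega_d>0$, since the wedge of the corresponding K\"ahler forms is a positive volume form; and (ii) for K\"ahler classes $\omega_1,\ldots,\omega_{d-2}$ the symmetric bilinear form
$$(\alpha,\beta)\mapsto \int_Y \alpha\beta\,\omega_1\cdots\omega_{d-2}$$
on $H^{1,1}_\RR(Y)$ is non-degenerate and has exactly one positive eigenvalue; this is the mixed Hodge--Riemann bilinear relations (see \cite{RT22} and the references therein), which by Lemma~\ref{lemma:hodge} is equivalent to the classical Alexandrov--Fenchel inequality for K\"ahler classes together with its equality case.

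Next I would set up the cone. As in the proof of Theorem~\ref{thm:mixeddiscriminants}, by thickening a full-dimensional polytope inside the K\"ahler cone one produces an open convex cone $C\subset H^{1,1}_\RR(Y)$ containing $\omega_1,\ldots,\omega_m,\alpha$, with trivial lineality space and with $\overline C\setminus\{0\}$ contained in the open K\"ahler cone. Then every extreme-ray generator in $S_C$ is a K\"ahler class, so inputs (i) and (ii) together with Theorem~\ref{thm:strictlyClorentzian} show that the volume polynomial $P$ is strictly $C$-Lorentzian.

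Now I would invoke Theorem~\ref{thm:invarianceC} with $\mathbf w_i=\omega_i$ and $\partial_s=s(D_{\omega_1},\ldots,D_{\omega_m})$: since $s$ is a non-zero dually Lorentzian polynomial of degree $d-2$, the polynomial $\partial_s P$ is again strictly $C$-Lorentzian. A direct computation (differentiating $\int_Y\omega^d$) identifies $\partial_s P$, up to the positive constant $d!/2$, with the quadratic polynomial $\omega\mapsto\int_Y \omega^2\, s(\omega_1,\ldots,\omega_m)$, whose polar bilinear form is (again up to a positive constant) $(\alpha,\beta)\mapsto\int_Y\alpha\beta\, s(\omega_1,\ldots,\omega_m)$. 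By Proposition~\ref{prop:strictlyClorentzian} this form is non-degenerate with exactly one positive eigenvalue, and Lemma~\ref{lemma:hodge} (equivalence of (3) and (4)) then yields the asserted inequality as well as the characterization of equality, namely $\alpha$ proportional to $\beta$.

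The only genuine obstacle is input (ii): one needs the \emph{mixed} Hodge--Riemann bilinear relations for a product of $d-2$ possibly distinct K\"ahler classes (not merely the Hodge index theorem for a single polarization), and one needs it in the non-degenerate form so that the ``radical is $L$'' hypothesis of Theorem~\ref{thm:strictlyClorentzian} holds with $L=0$. This is by now standard, and once it is in place the rest is a routine transcription of the mixed-discriminant proof. As in the remark after Theorem~\ref{thm:genAF}, the same scheme applies whenever a mixed Hodge--Riemann property is available, e.g.\ for suitable nef classes as in \cite{hu2022hard}.
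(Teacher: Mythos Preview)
Your proposal is correct and follows essentially the same approach as the paper: the paper likewise reduces to transcribing the proof of Theorem~\ref{thm:mixeddiscriminants}, using the mixed Hodge--Riemann bilinear relations for K\"ahler classes (citing Dinh--Nguy\^en and Cattani) to verify that the volume polynomial $\omega\mapsto\int_Y\omega^d$ is strictly $C$-Lorentzian for a suitable cone $C$, and then applying Theorem~\ref{thm:invarianceC}, Proposition~\ref{prop:strictlyClorentzian}, and Lemma~\ref{lemma:hodge}. Your explicit identification of input (ii) as the non-trivial ingredient matches the paper's own emphasis.
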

\begin{proof}
Using the mixed version of the Alexandrov-Fenchel inequality for K\"ahler classes \cite{DinhNguyen,Cattani} the same proof as that of Theorem~\ref{thm:mixeddiscriminants} shows that the polynomial
$$ \omega \mapsto  \int_Y \omega^d, \quad \omega\in H_\RR^{1,1}(Y), $$
is strictly Lorentzian with respect to any open convex cone $C$ with the property that its closure $\overline C$ is contained  in the K\"ahler cone, i.e., the open convex  cone  in $H_\RR^{1,1}(Y)$ of K\"ahler classes. 
  Thus the proof is completed essentially the same as that of Theorem \ref{thm:mixeddiscriminants} and is left to the reader.
\end{proof}

\begin{remark}\label{rmk:RT1}
	In the terms of \cite{RT22}, Theorem~\ref{thm:hodge-riemann} (along with the case of equality) says that the
	$2$-cycle $s (\omega_1, \cdots ,\omega_m)$ has the ``Hodge-Riemann property".  When $s$ is a Schur polynomial we recover the main result in \cite{RT22}.
\end{remark}


\begin{remark}\label{rmk:RT2}
	Theorem \ref{thm:hodge-riemann} of course applies to the case that each $\omega_i$ is the first Chern class of an ample line bundle.    Hence, by taking into account Theorem~\ref{thm:schubert},  Proposition~\ref{prop:dual-product}, Corollaries~\ref{cor:derived} and \ref{cor:polya-sequence} we recover a wide range of results about the Hodge-Riemann properties of certain $2$-cycles from \cite{RT19}, \cite{RT20} in a much broader context as they now apply also to Lorentzian polynomials.  
\end{remark}

\begin{remark}\label{rmk:RT3}
	Although we are able to recover (and extend) all of the results from \cite{RT19}, \cite{RT20} concerning collections of ample line bundles, we have not been able to recover the statements concerning ample vector bundles.  In fact it is not clear that this should be possible.  Consider a  Lorentzian polynomial $q$ that is not Schur positive (Example \ref{ex:duallylorentzianbutnotschurpositive}).  Then by \cite[Theorem 8.3.9]{MR2095472}  there exists an ample vector bundle $E$ on a irreducible variety $X$ of dimension $3$ such that $\int_X q(E)<0$.   Thus if $X' = X\times \mathbb P^2$ and $h$ is an ample class on $X'$ sufficiently close to $c_1(\mathcal O_{\mathbb P^2}(1))$ then $\int_{X'} q(E) h^2<0$.  Thus the characteristic classes coming from $q$ are manifestly different from the characteristic classes considered in \cite{RT19,RT20}.
\end{remark}

\subsection{A generalization of the AF inequality for convex bodies}

We now run a similar argument in the context of mixed volumes for convex bodies.  To deal with the case of equality one must restrict attention to non-singular convex bodies, but there is the advantage that the space of such bodies is infinite dimensional and so we do not need to assume any particular upper bound on their number.

To set the scene, let $K_1, \ldots, K_n$ be a collection of convex bodies in dimension $d$. We consider the volume polynomial
$\operatorname{vol}_d(x_1K_1 + \ldots +x_nK_n)$ which has a polynomial expansion
\[
  \vol_d(x_1K_1 + \ldots+ x_nK_n)=\sum_\alpha \frac{d!}{\alpha!}V(K^\alpha)x^\alpha.\]
These normalised coefficients
\[V(K^\alpha)=:V(\underbrace{K_1,\ldots,K_1}_{\alpha_1},\ldots,\underbrace{K_n,\ldots,K_n}_{\alpha_n})=\frac{1}{d!}\partial^\alpha\operatorname{vol}_K\]
are known as \emph{mixed volumes}. We may linearly extend this notation by setting
\[
V(f(K)):=\sum_\alpha \lambda_\alpha \cdot V(K^\alpha).
\]
for any homogeneous polynomial $f=\sum_\alpha \lambda_\alpha x^\alpha$ of degree $d$. Further we will write $$V(f(K),g(K)):=V(fg(K))$$ for homogeneous polynomials that factorise.

Recall that the mixed volumes naturally extend to differences of support functions. Indeed, if $f_i = h_{L_i} - h_{K_i}$, then  we can define $ V(f_1,\ldots, f_n)$  by linear extension.  It is straight forward to check that this is well-defined, and does not depend on how $f_i$ is represented as the difference of two support functions.    
 
 A fundamental inequality between mixed volumes is the Alexandrov-Fenchel inequality discovered in the 1930s.  Despite recent progress \cite{SvH:Extremals,SvH:Polytopes},  a complete characterisation of the equality cases in this inequality  is still missing, but for non-singular bodies $C_1,\ldots, C_{n-2}$ however, the following is known (here we call a convex body $K$ non-singular if through each boundary point of $K$ there exists a unique supporting hyperplane).
 
 \begin{theorem}[{\cite[Theorem 7.6.8]{Schneider:BM}}]\label{thm:AF}
 	If $K,L,C_1,\ldots, C_{n-2}$ are convex bodies in $\RR^n$, then
  	$$ V(K,L, C_1,\ldots, C_{n-2})^2 \geq  V(K,K, C_1,\ldots, C_{n-2}) V(L,L, C_1,\ldots, C_{n-2}).$$ 
  	If the convex bodies $C_1,\ldots, C_{n-2}$ are non-singular, then equality holds if and only if $K$ and $L$ are homothetic. 
 \end{theorem}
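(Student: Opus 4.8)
The plan is to treat Theorem~\ref{thm:AF} as the (difficult) classical statement that it is and to organise the argument into two essentially independent halves: first the inequality, then the characterisation of equality under the non-singularity hypothesis. The full argument is carried out in \cite[Ch.~7]{Schneider:BM}, and I only sketch the shape I would follow.

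For the inequality I would first reduce, using continuity of mixed volumes together with approximation, to the case where $K,L,C_1,\ldots,C_{n-2}$ are all of class $C^2_+$. Identifying a convex body with the restriction of its support function to $S^{n-1}$ and writing $h_i$ for the support function of $C_i$, mixed volumes of such bodies satisfy an integral representation
\[ V(f,g,C_1,\ldots,C_{n-2}) \;=\; \frac{1}{n}\int_{S^{n-1}} f\cdot L(g)\,d\sigma, \]
where $L=L_{C_1,\ldots,C_{n-2}}$ is the uniformly elliptic self-adjoint second-order operator on the sphere obtained as the mixed discriminant of the matrices $\nabla^2 h_i + h_i\,\mathrm{Id}$ (here $\nabla^2$ is the covariant Hessian on $S^{n-1}$). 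The symmetric bilinear form $B(f,g):=V(f,g,C_1,\ldots,C_{n-2})$ on $C^2(S^{n-1})$ is strictly positive on the support function of every full-dimensional body, so the Alexandrov--Fenchel inequality is precisely the assertion that $B$ has at most one positive eigenvalue; granting that, the stated inequality follows from the standard reverse Cauchy--Schwarz for such forms (compare Lemma~\ref{lemma:hodge}). To establish the eigenvalue bound I would invoke the spectral theory of elliptic operators on the compact manifold $S^{n-1}$: after normalising $L$ against a positive first eigenfunction, the strong maximum principle forces its largest eigenvalue to be simple, which is the required negative-definiteness on a hyperplane. An alternative route, closer to the earlier parts of this paper, is to approximate by polytopes, where $B$ becomes the classical ``Alexandrov matrix'' --- a symmetric matrix with non-negative off-diagonal entries which is irreducible and positive on an explicit positive vector --- so that the one-positive-eigenvalue property drops out of exactly the Perron--Frobenius and eigenvalue-interlacing argument used to prove Lemma~\ref{lem:sylvester-criterion}; yet another (for rational data) is the Hodge index theorem on the associated toric variety.

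For the equality statement I would assume $C_1,\ldots,C_{n-2}$ non-singular and that equality holds. Then $f:=h_L-h_K$ lies in the radical of $B$, and a careful smooth approximation of $K$ and $L$ (this is the step that uses non-singularity of the $C_i$ in an essential way) reduces the problem to showing that a $C^2$ function with $L(f)=0$ on $S^{n-1}$ must be the restriction of a linear function $x\mapsto\langle v,x\rangle$. This last uniqueness statement I would obtain from the strong maximum principle and Hopf lemma for the uniformly elliptic operator $L$, together with the explicit action of $L$ on constant and linear functions. Linear functions correspond exactly to translations, so combined with the scaling freedom inherent in the equality case of Cauchy--Schwarz this gives $h_L=\lambda h_K + \langle v,\cdot\rangle$ for some $\lambda>0$ and $v\in\RR^n$, i.e.\ $K$ and $L$ are homothetic; the converse is immediate from the multilinearity of mixed volumes.

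The main obstacle is that neither half is cheap: there is no known elementary proof of the one-positive-eigenvalue property, and the equality characterisation rests on the unique-continuation analysis of the linearised Monge--Amp\`ere-type operator $L$. The delicate point is precisely the role of the non-singularity hypothesis --- it is what keeps that operator elliptic in the limiting approximation, and hence what prevents the radical of $B$ from being larger than the space of linear functions; for singular $C_i$ (polytopes, say) the radical genuinely grows, and the general equality case is correspondingly much harder (see \cite{SvH:Extremals,SvH:Polytopes}), which is why one assumes non-singularity here.
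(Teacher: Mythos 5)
The paper gives no proof of this statement: it is quoted as a classical theorem with a citation to \cite[Theorem 7.6.8]{Schneider:BM} and then used as input for Theorem~\ref{thm:genAF}. Your decision to defer to Schneider is therefore exactly what the paper does, and your two-part outline --- the integral representation of $V(f,g,C_1,\ldots,C_{n-2})$ through a self-adjoint elliptic operator $L$ on $S^{n-1}$, the ``at most one positive eigenvalue'' property combined with the reverse Cauchy--Schwarz of Lemma~\ref{lemma:hodge} for the inequality, and the identification of the kernel of $L$ (equivalently, the radical of the form $B$) with restrictions of linear functions for the equality case under non-singularity --- is a faithful description of the classical Hilbert--Alexandrov route; it is also structurally the same mechanism the paper itself exploits later in the proofs of Theorems~\ref{thm:genAF} and~\ref{thm:genVal}.

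One caveat about the sketch itself: the step ``the strong maximum principle forces its largest eigenvalue to be simple, which is the required negative-definiteness on a hyperplane'' is not sufficient as stated. Simplicity of the top eigenvalue does not by itself exclude a second positive eigenvalue. In the classical proofs the one-positive-eigenvalue property is obtained either by a continuity/deformation argument starting from the ball (where the spectrum is computed with spherical harmonics, and eigenvalues cannot cross $0$ along the deformation because the kernel is pinned to the linear functions), or by Alexandrov's induction over strongly isomorphic polytopes --- the route Schneider actually follows for the inequality --- where the Perron--Frobenius and interlacing arguments you mention as an ``alternative'' do the real work. So if you were to expand your sketch into a proof, that deformation (or polytope induction) step is the missing ingredient; the maximum principle alone does not close the gap. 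The equality half of your sketch, including the role of non-singularity in keeping the linearised operator elliptic and its kernel no larger than the linear functions, is in line with the treatment in \cite{Schneider:BM}.
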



%

 A convex body $K$ in $\RR^n$ is said to be of class $C_+^2$ if $\partial K$ is a strictly positively curved $C^2$ submanifold of $\RR^n$.
 
\begin{theorem}[Generalized AF inequality for mixed volumes]\label{thm:genAF}
  Let $K,L,C_1, \ldots, C_n$ be  convex bodies in $\RR^d$. Then  
  \[
    V(K,L,s(C_1, \ldots, C_n))^2 \geq V(K,K,s(C_1, \ldots, C_n))V(L,L,s(C_1, \ldots, C_n))
  \]  
   for every non-zero dually Lorentzian polynomial $s$ of degree $d-2$.
Moreover, if  $K,L,C_1,\ldots, C_{n}$ are of class $C^2_+$, then equality holds if and only if $K$ and $L$ are homothetic.
\end{theorem}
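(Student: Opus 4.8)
The plan is to mimic the structure of the proof of Theorem~\ref{thm:mixeddiscriminants}, replacing the determinant by the volume polynomial and using the $C$-Lorentzian machinery of Section~\ref{sec:CLorentzian}. First I would work in the finite-dimensional space $W = \mathrm{span}(h_{K}, h_{L}, h_{C_1},\ldots, h_{C_n})\subset C(S^{d-1})$ of support functions (more precisely, in a suitable finite-dimensional subspace of differences of support functions containing all the relevant bodies), so that $v\mapsto \vol_d(v)$ for $v$ a support function of a convex body extends by polarization to a degree-$d$ form on $W$. Let $\mathcal K\subset W$ be the open cone of support functions of bodies of class $C_+^2$ (which is open and convex in $W$), and choose, exactly as in the proof of Theorem~\ref{thm:mixeddiscriminants}, an open convex cone $C$ with $C_1,\ldots,C_n\in C$ and $\overline C\setminus\{0\}\subset \mathcal K$; such a $C$ exists by the polytope-thickening argument given there, using that $\mathcal K$ is open and convex and the $C_i$ lie in it (after translating the problem to assume the $C_i$ are $C_+^2$).

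The key step is to show that the volume polynomial $\mathrm{vol}_d$ is strictly $C$-Lorentzian on $W$. For this I would invoke Theorem~\ref{thm:strictlyClorentzian}: for $\mathbf v_1,\ldots,\mathbf v_d\in S_C/L$ — which, since $C$ has trivial lineality space and every extreme ray of $\overline C$ lies in $\mathcal K$, are (up to scaling) support functions of $C_+^2$ bodies — the mixed volume $V(\mathbf v_1,\ldots,\mathbf v_d)>0$, giving property (1); and for $\mathbf v_1,\ldots,\mathbf v_{d-2}\in S_C/L$, the bilinear form $(x,y)\mapsto D_xD_yD_{\mathbf v_1}\cdots D_{\mathbf v_{d-2}}\mathrm{vol}_d = d!\,V(x,y,\mathbf v_1,\ldots,\mathbf v_{d-2})$ has exactly one positive eigenvalue with trivial radical. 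The positivity of one eigenvalue and non-degeneracy follow from Theorem~\ref{thm:AF} (with $C_1,\ldots,C_{d-2}$ non-singular, which $C_+^2$ bodies are) combined with Lemma~\ref{lemma:hodge}, precisely as in Theorem~\ref{thm:mixeddiscriminants}: the equality characterization in Theorem~\ref{thm:AF} says the form is non-degenerate modulo homotheties, i.e.\ the radical is at most the line of homotheties, and that this line actually lies outside $W$ or is excluded once one works with the translation-reduced form — here one must be slightly careful to quotient out translations, so that $\mathrm{vol}_d$ becomes a genuinely non-degenerate (signature $(+,-,\ldots,-)$) form on the reduced space.

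Once $\mathrm{vol}_d$ is strictly $C$-Lorentzian, the rest is immediate: set $\partial_s = s(D_{C_1},\ldots,D_{C_n})$; by Theorem~\ref{thm:invarianceC}, since $s$ is dually Lorentzian and $C_1,\ldots,C_n\in C$, the polynomial $\partial_s\,\mathrm{vol}_d$ — which is a positive multiple of $v\mapsto V(v^2,s(C_1,\ldots,C_n))$ — is strictly $C$-Lorentzian of degree $2$; then Proposition~\ref{prop:strictlyClorentzian} says the bilinear form $(f,g)\mapsto V(f,g,s(C_1,\ldots,C_n))$ is non-degenerate with exactly one positive eigenvalue, and Lemma~\ref{lemma:hodge} delivers both the inequality for arbitrary convex bodies $K,L$ (by density/continuity of mixed volumes, reducing the general inequality to the $C_+^2$ case, or by directly applying part (2)$\Leftrightarrow$(4) of Lemma~\ref{lemma:hodge} and then approximating) and, in the $C_+^2$ case, the characterization that equality forces $h_K - h_L$ to be proportional to $h_K$ modulo the radical, i.e.\ $K$ and $L$ homothetic. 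The main obstacle I anticipate is the bookkeeping around translations: mixed volumes only see support functions modulo linear functions, so to get genuine non-degeneracy and a clean signature one must pass to $W$ modulo translations throughout, and check that the cone $C$, the sets $S_C/L$, and the equality case in Theorem~\ref{thm:AF} all behave well under this quotient; the verification that $C_+^2$ bodies give interior points and extreme rays landing in $\mathcal K$ is routine but needs to be stated carefully.
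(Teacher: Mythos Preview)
Your proposal is correct and follows essentially the same route as the paper: work in the finite-dimensional span of the relevant support functions, choose an auxiliary open convex cone $C$ whose closure lies inside the cone of $C^2_+$ support functions, use Theorem~\ref{thm:AF} together with Lemma~\ref{lemma:hodge} and Theorem~\ref{thm:strictlyClorentzian} to see that $\vol_d$ is strictly $C$-Lorentzian, apply Theorem~\ref{thm:invarianceC} and Proposition~\ref{prop:strictlyClorentzian}, and approximate for general bodies. Two small remarks: the paper also places $h_K$ and $h_L$ (not just the $h_{C_i}$) inside $C$, which you will want for the positivity $V(L,L,s(C_1,\ldots,C_n))>0$ needed in Lemma~\ref{lemma:hodge}; and your worry about translations is legitimate---the paper glosses over it, but your suggested fix of passing to the quotient by linear functions (so that the equality case of Theorem~\ref{thm:AF} really gives a trivial radical) is the natural way to make the argument airtight.
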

\begin{proof}
	Let $\calH\subset C^2(S^{d-1})$ denote the open convex cone of support functions of convex bodies in the class $C^2_+$. 
	Let $K,L, C_1,\ldots, C_{n}$ 	be convex bodies of class $C_+^2$ and let $E\subset  C^2(S^{d-1})$ denote the subspace spanned by their support functions. Choose an open convex cone $C\subset E$ with $\overline C\subset \calH\cap E$ containing the support functions of the bodies  $K,L, C_1,\ldots, C_{n}$. 
		The Alexandrov-Fenchel inequality,  Lemma~\ref{lemma:hodge}, and Theorem~\ref{thm:strictlyClorentzian} imply that  the volume  polynomial 
	$$ f\mapsto \vol_d(f), \quad f\in E,$$ is strictly $C$-Lorentzian. 
	Thus the proof is completed essentially the same way as that of Theorem \ref{thm:mixeddiscriminants} and is left to the reader. The inequality for general convex bodies follows by approximation.

\end{proof}

\subsection{Valuations on convex bodies} 
Very recently the existence of a K\"ahler package  for valuations on convex bodies has been conjectured. So far Poincar\'e duality and several instances of the hard Lefschetz theorem and Hodge-Riemann bilinear relations have been established \cite{Alesker:Product, BB:Rumin, Kotrbaty:HR, KotrbatyWannerer:AF, KW:HR}.

In the context of convex geometry, a valuation is a function $\phi\colon \calK(\RR^d)\to \RR$ on the space of convex bodies that is finitely additive in the sense that 
$$ \phi(K\cup L) =\phi(K) + \phi(L)-\phi(K\cap L)$$ 
holds whenever $K\cup L$ is convex.  A valuation $\phi$ is called translation-invariant if $\phi(K+x)=\phi(K)$ holds for all $x\in \RR^d$ and all convex bodies $K$ and $\phi$ is said to be homogeneous of degree $\alpha$ if $\phi(tK)= t^\alpha \phi(K)$ holds for all $t>0$. As the space of convex bodies carries a natural topology, one may speak of continuity  of valuations. One of the most important technical innovations introduced by Alesker into the subject is the notion of smoothness of valuations. One possible definition is as follows. Let $\Val=\Val(\RR^d)$ denote the space of translation-invariant and continuous valuation on $\RR^d$. 
 A valuation $\phi\in \Val$ is called smooth if  there is a constant $c\in \RR$ and a smooth differential $(d-1)$-form on the sphere bundle $\RR^n\times S^{d-1}$ that is invariant under translations $(x,u)\to (x+y,u)$ such that 
$$ \phi(K) =  c\vol_d(K) + \int_{N(K)} \omega.$$
Here $N(K)$ denotes the normal cycle of $K$ which as a set consists of the pairs of $(x,u)$ the outward-pointing unit normals $u$ at boundary points $x$ of $K$. 
Let $\Val^\infty\subset \Val$ denote the space of smooth valuations.  By a deep theorem of Alesker~\cite{Alesker:Irr},  smooth valuations form a dense subspace of $\Val$. 

By McMullen's decomposition theorem the spaces $\Val^\infty$ and $\Val$ are graded by the degree of homogeneity of valuations, 
$$ \Val^\infty(\RR^d) = \bigoplus_{i=0}^d \Val_i^\infty(\RR^d).$$
The spaces $\Val_i(\RR^d)$ are infinite dimensional for $0<i<d$. Only in  degrees $0,d-1$, and $d$ these space admit a simple description. $\Val_0$ is spanned by the constant valuation and $\Val_d$ is spanned by volume and so both are one-dimensional. For every $\phi\in \Val_{d-1}$ there exists a continuous function on $S^{d-1}$ that satisfies 
\begin{equation}\label{eq:n-1hom} \phi(K) = V(f,K,\ldots, K)\end{equation}
for all $K$. 
Moreover, up of restrictions of linear functions to the sphere, $f$ is unique. If $\phi$ is smooth, then so is $f$. 
Also the smooth valuations in $\Val_1$ admit a simple description. Namely, if $\phi$ is smooth then there exists a smooth function $f$ on $S^{d-1}$ that is orthogonal to the restriction of linear functions with respect the standard $L^2$ inner product on $S^{d-1}$ and satisfies
\begin{equation}\label{eq:1hom} \phi(K)= \int_{S^{d-1}}  h_K(u)f(u) du\end{equation}
 for all convex bodies $K$. 

With respect to the operation $*$ of convolution valuations, the space $\Val^\infty$ is becomes commutative graded algebra with the volume as identity element, see \cite{BF:Convolution}. 

The generalization of the Alexandrov-Fenchel inequality, Theorem~\ref{thm:genAF}, implies the following statement about smooth valuations.

\begin{theorem}\label{thm:genVal}Let $s$ be a non-zero dually Lorentzian polynomial of degree $d-2$ in $n$ variables. Let $C_0,C_1,\ldots,C_n$ be convex bodies in $\RR^d$ with smooth and strictly positively curved boundary. Let 	$$ \mu_i(K) = V(K,\ldots, K, C_i) $$
	Then the following statements holds:
\begin{enumerate}
	\item Hard Lefschetz theorem: The map $\Val_{d-1}^\infty(\RR^d)\to \Val_{1}^\infty(\RR^d)$, 
	$$ \phi \mapsto \phi * s(\mu_1,\ldots, \mu_n)$$
	is an isomorphism of topological vector spaces.
	\item Hodge-Riemann bilinear relations: For  every $\phi \in \Val_{d-1}^\infty(\RR^d)$
	$$  Q(\phi,\mu_0)^2 \geq Q(\phi,\phi)Q(\mu_0,\mu_0),$$
	where 
	$$ Q(\phi,\mu)= \phi * \mu* s(\mu_1,\ldots, \mu_n) \in \Val_0(\RR^d) \simeq \RR,$$
	and equality holds if and only if $\phi$ is proportional to $\mu_0$. 
\end{enumerate}

\end{theorem}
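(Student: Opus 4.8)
The plan is to translate the statement into the language of mixed volumes via the Alesker--Bernig--Fu dictionary and then feed in Theorem~\ref{thm:genAF}. Recall from \eqref{eq:n-1hom} that every $\phi\in\Val_{d-1}^\infty(\RR^d)$ is represented by a smooth function $f$ on $S^{d-1}$, unique modulo restrictions of linear functions, through $\phi(K)=V(f,K,\ldots,K)$; under this identification $\mu_i\leftrightarrow h_{C_i}$ for $i=0,\ldots,n$, and $\Val_{d-1}^\infty(\RR^d)\cong C^\infty(S^{d-1})/\{\text{linear functions}\}$. (Since all $C_i$ have smooth boundary, the $\mu_i$ lie in $\Val_{d-1}^\infty$.) The first step is the standard compatibility of the Bernig--Fu convolution \cite{BF:Convolution} with mixed volumes: for $\phi\leftrightarrow f$ and $\psi\leftrightarrow g$ there is a positive constant $c_d$ depending only on $d$ with
\[
 \phi * \mu_{C_{i_1}}*\cdots*\mu_{C_{i_{d-2}}}*\psi \;=\; c_d\, V(f,g,C_{i_1},\ldots,C_{i_{d-2}})\in\Val_0(\RR^d)\cong\RR .
\]
That the right-hand side is well defined modulo linear functions reflects that the mixed volume of a linear function with anything vanishes, and that the product lands in $\Val_0$ reflects that $s(\mu_1,\ldots,\mu_n)$, of $\mu$-degree $d-2$, has convolution-degree $2$. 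Extending linearly in the coefficients of $s$ gives $Q(\phi,\psi)=\phi*\psi*s(\mu_1,\ldots,\mu_n)=c_d\,V\bigl(f,g,s(C_1,\ldots,C_n)\bigr)$.

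For part (2) I would then argue exactly as in the proof of Theorem~\ref{thm:genAF}. Given $\phi\leftrightarrow f$ (if $f$ is linear, $\phi=0$ and there is nothing to prove), write $f=h_A-h_B$ with $A,B$ of class $C^2_+$, which is possible since $f$ plus a large constant is a $C^2_+$ support function. As in that proof, choose a finite-dimensional subspace $E$ of differences of $C^2_+$ support functions containing $h_A,h_B,h_{C_0},\ldots,h_{C_n}$ (and meeting the linear functions only in $0$, the usual harmless bookkeeping), together with an open convex cone $C\subset E$ whose closure lies in the $C^2_+$ support functions of $E$ and which contains $h_{C_0},\ldots,h_{C_n}$, so that $\vol_d|_E$ is strictly $C$-Lorentzian. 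By Theorem~\ref{thm:invarianceC} the degree-$2$ polynomial $s(D_{h_{C_1}},\ldots,D_{h_{C_n}})\vol_d$ is then strictly $C$-Lorentzian as well, and Proposition~\ref{prop:strictlyClorentzian} shows that its associated bilinear form on $E$, namely $(x,y)\mapsto d!\,V(x,y,s(C_1,\ldots,C_n))$, is non-degenerate with exactly one positive eigenvalue. Since $V(C_0,C_0,s(C_1,\ldots,C_n))>0$, Lemma~\ref{lemma:hodge} applied to this form with $y=h_{C_0}$ and $x=f$ yields $Q(\phi,\mu_0)^2\ge Q(\phi,\phi)\,Q(\mu_0,\mu_0)$, with equality if and only if $f$ is proportional to $h_{C_0}$ modulo linear functions, i.e.\ if and only if $\phi$ is proportional to $\mu_0$.

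For part (1) I would use Alesker's Poincar\'e duality \cite{Alesker:Product,BF:Convolution}: the convolution pairing $\Val_{d-1}^\infty(\RR^d)\times\Val_1^\infty(\RR^d)\to\Val_0(\RR^d)\cong\RR$ is perfect. Since convolution is commutative and associative, $Q(\phi,\psi)=\langle L\phi,\psi\rangle$ where $L\phi=\phi*s(\mu_1,\ldots,\mu_n)\in\Val_1^\infty(\RR^d)$; thus $L$ is the operator attached to the bilinear form $Q$, and it is injective if and only if $Q$ is non-degenerate. Non-degeneracy is immediate from part (2): if $Q(\phi,\psi)=0$ for all $\psi$, then taking $E$ as above with $f\in E$ forces $f$ into the radical of $V(\,\cdot\,,\,\cdot\,,s(C_1,\ldots,C_n))$ on $E$, hence $f$ is linear and $\phi=0$. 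Consequently $L$ is injective, continuous, and (with respect to Poincar\'e duality) self-adjoint, so its range is dense in $\Val_1^\infty(\RR^d)$; the uniform Hodge--Riemann estimates of part (2) make that range closed, whence $L$ is a bijection, and $L^{-1}$ is continuous by the open mapping theorem for Fr\'echet spaces. This is the usual ``Hodge--Riemann implies Hard Lefschetz'' mechanism, carried out for valuations in \cite{Alesker:Irr,Kotrbaty:HR,KotrbatyWannerer:AF}.

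The main obstacle is the functional-analytic step in part (1): upgrading injectivity of $L$ to an isomorphism. In finite dimensions non-degeneracy of $Q$ would give bijectivity for free, but $\Val_{d-1}^\infty$ and $\Val_1^\infty$ are infinite-dimensional Fr\'echet spaces, so one must invoke the closed-range machinery already developed in the valuations literature; all of the genuinely new input --- the inequality and its signature refinement for the cycle $s(C_1,\ldots,C_n)$ --- is supplied by Theorem~\ref{thm:genAF}. A secondary technical point requiring care is making the dictionary in the first step precise, in particular tracking the constant $c_d$, verifying that $s(\mu_1,\ldots,\mu_n)$ corresponds to $V(\,\cdot\,,\,\cdot\,,s(C_1,\ldots,C_n))$, and checking that the equality description ``homothetic'' for genuine convex bodies becomes ``proportional modulo linear functions'' for an arbitrary $\phi$.
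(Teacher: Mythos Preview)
Your argument for part~(2) is essentially the paper's: translate via \eqref{eq:n-1hom} to the functional Alexandrov--Fenchel inequality
\[
V(f,C_0,s(C_1,\ldots,C_n))^2\ge V(f,f,s(C_1,\ldots,C_n))\,V(C_0,C_0,s(C_1,\ldots,C_n)),
\]
write $f$ as a difference of $C^2_+$ support functions, and apply Lemma~\ref{lemma:hodge} to the resulting finite-dimensional quadratic form. (The paper works on the span of $h_{C_0},h_{K_1},h_{K_2}$ rather than your larger $E$, but this is cosmetic.)

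Where you diverge is part~(1). The paper does \emph{not} invoke an abstract Poincar\'e-duality-plus-closed-range argument. Instead it writes the map $L$ concretely as a second-order differential operator on $C^\infty(S^{d-1})$,
\[
Df=D^{d-1}\bigl(\nabla^2 f+f\,g_{S^{d-1}},\,s(q_1,\ldots,q_n)\bigr),\qquad q_i=\nabla^2 h_{C_i}+h_{C_i}\,g_{S^{d-1}},
\]
with $D^{d-1}$ the pointwise mixed discriminant on $T^*S^{d-1}$. It then checks directly that $D$ is \emph{elliptic} (the principal symbol at $\xi$ is a mixed discriminant of positive-definite forms restricted to $\xi^\perp$, hence nonzero) and self-adjoint (by symmetry of mixed volumes). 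Ellipticity plus self-adjointness yields the Hodge decomposition $C^\infty(S^{d-1})=\operatorname{im}D\oplus\ker D$; the equality characterization in part~(2) identifies $\ker D$ with the linear functions. Combined with the descriptions \eqref{eq:n-1hom} and \eqref{eq:1hom} of $\Val_{d-1}^\infty$ and $\Val_1^\infty$, this gives the isomorphism.

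The advantage of the paper's route is that the step you yourself flag as the main obstacle --- upgrading injectivity to surjectivity --- is handled by standard elliptic theory rather than by an unspecified ``uniform Hodge--Riemann estimate'' or by deferring to the valuations literature. Your sketch is plausible in spirit, but the sentence ``the uniform Hodge--Riemann estimates of part~(2) make that range closed'' is not an argument: part~(2) gives a sign condition on $Q(\phi,\phi)$ for $\phi$ in the primitive part, not a coercivity estimate controlling a Fr\'echet seminorm of $\phi$ by $L\phi$. Absent such an estimate, closedness of the range does not follow. The paper sidesteps this entirely by recognizing $L$ as elliptic.
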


\begin{remark} Applied to the dually Lorentzian polynomial 
	$$s(x_1,\ldots,x_n) = s_{\lambda^1}(x_1,\ldots,x_{n_1}) \cdots 
	s_{\lambda^p}(x_1,\ldots,x_{n_p}),\quad  n=n_1+ \cdots +n_p,$$
	Theorem~\ref{thm:genVal}, item (2), confirms  Conjecture 4.10 of \cite{hu2023intersection}.
\end{remark}

For the proof of the theorem, let  $g_{S^{d-1}}$ denote the standard Riemannian metric on the  unit sphere $S^{d-1}\subset \RR^n$ and let $\nabla$ be its Levi-Civita connection. It is well-known that if $K_1,\ldots, K_d$ are convex bodies with smooth support functions, then their mixed volume can be expressed as 
$$ V(K_1,\ldots, K_d)= \frac{1}{d} \int_{S^{d-1}} h_{K_d} D^{d-1}( \nabla^2  h_{K_1}  +  h_{K_1} g_{S^{d-1}},\ldots, 
\nabla^2  h_{K_{d-1}}  +  h_{K_{d-1}} g_{S^{d-1}}) $$
where  the mixed discriminant $D^{d-1}$ is defined pointwise using the Riemannian volume form.

 Let $s$ be a dually Lorentzian polynomial of degree $d-2$ in $n$ variables. Let $C_1,\ldots,C_n$ be convex bodies in $\RR^d$ with smooth and strictly positively curved boundary. Let a second-order  linear differential operator 
$ D\colon C^\infty(S^{d-1})\to C^\infty(S^{d-1})$  be defined as follows. 
Let $q_i\in \Sym^2( T^* S^{d-1})$ be 
$$ q_i = \nabla^2  h_{C_i}  +  h_{C_i} g_{S^{d-1}}, \quad i=1,\ldots, n,$$
and put
$$ Df =  D^{d-1}( \nabla^2 f +  f g_{S^{d-1}}, s(q_1, \ldots, q_n)).$$

\begin{lemma}
	$D$ is elliptic and self-adjoint.
\end{lemma}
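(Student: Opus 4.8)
The plan is to analyze the principal symbol of the operator $D$ and show it is definite, then check the formal self-adjointness directly. First I would write $D$ explicitly in local coordinates: since $\nabla^2 f$ contributes the second-order part and the zeroth-order term $f g_{S^{d-1}}$ does not affect the symbol, the principal symbol of $f \mapsto D^{d-1}(\nabla^2 f + f g, s(q_1,\ldots,q_n))$ at a covector $\xi \in T_x^* S^{d-1}$ is, up to a positive constant, $D^{d-1}(\xi \otimes \xi, s(q_1,\ldots,q_n)(x))$, where $\xi \otimes \xi$ is the rank-one quadratic form $v \mapsto \langle \xi, v\rangle^2$. So ellipticity amounts to showing that the mixed discriminant $D^{d-1}(\xi\otimes\xi, s(q_1,\ldots,q_n)(x))$ is nonzero (in fact of constant sign) for every nonzero $\xi$.

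The key input is that each $q_i(x) = \nabla^2 h_{C_i} + h_{C_i} g_{S^{d-1}}$ is a \emph{positive definite} quadratic form on the $(d-1)$-dimensional space $T_x S^{d-1}$, because $C_i$ is of class $C^2_+$ (this is exactly the standard fact that the reverse Weingarten map of a body with strictly positively curved boundary is positive definite). Since $s$ is dually Lorentzian and nonzero, Theorem~\ref{thm:invarianceC} (or directly the mixed-discriminant specialization in the proof of Theorem~\ref{thm:mixeddiscriminants}) tells us that the form $p \mapsto D^{d-1}(p, p, s(q_1,\ldots,q_n)(x))$ — here $\dim T_xS^{d-1} = d-1$ and $s$ has degree $d-2$, so $D^{d-1}$ takes $d-1$ arguments — is strictly $C$-Lorentzian on $\Sym^2(T_x^*S^{d-1})$, hence by Lemma~\ref{lemma:hodge} its polarization $D^{d-1}(\cdot,\cdot,s(q_1,\ldots,q_n)(x))$ has signature $(+,-,\ldots,-)$ and in particular is non-degenerate. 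Applying this non-degenerate form to the rank-one form $\xi\otimes\xi$: one checks $D^{d-1}(\xi\otimes\xi, \xi\otimes\xi, s(\ldots)) = 0$ by rank considerations (a rank-one form is $q$-null when paired with itself in a mixed discriminant — the relevant $2\times 2$ determinant vanishes), so $\xi\otimes\xi$ lies in the "negative" part, and by non-degeneracy $D^{d-1}(\xi\otimes\xi, \cdot, s(\ldots)) \not\equiv 0$; more to the point, pairing against a positive definite form $g_{S^{d-1}}$ gives $D^{d-1}(\xi\otimes\xi, g_{S^{d-1}}, s(\ldots)) > 0$ since $\xi\otimes\xi \geq 0$, $g_{S^{d-1}} > 0$, and $s$ has nonnegative coefficients with positive-definite $q_i$, so each monomial mixed discriminant $D^{d-1}(\xi\otimes\xi, q^\alpha)$ is $\geq 0$ and at least one is strictly positive. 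This shows the principal symbol never vanishes on nonzero $\xi$, i.e. $D$ is elliptic (and, fixing a sign, of definite type).

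For self-adjointness, the operator $L_i f := \nabla^2 f + f g_{S^{d-1}}$ is formally self-adjoint on $C^\infty(S^{d-1})$ with the $L^2$ inner product induced by $g_{S^{d-1}}$: this is the classical fact underlying the symmetry of mixed volumes, $\int_{S^{d-1}} h_1\, D^{d-1}(L_2 h_2, \cdots) = \int_{S^{d-1}} h_2\, D^{d-1}(L_1 h_1, \cdots)$ when the remaining $d-2$ slots are filled with a fixed symmetric tuple, which follows by integration by parts (two integrations by parts move the two derivatives from $h_2$ onto $h_1$, the curvature correction terms being accounted for by the $+f g_{S^{d-1}}$ shift, exactly as in the standard symmetrization of the mixed volume integrand). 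Taking the fixed tuple to be $s(q_1,\ldots,q_n)$ and using linearity of the mixed discriminant in its entries, together with the fact that $s$ has real (indeed nonnegative) coefficients, gives $\int h_1 (Df_2) = \int h_2 (Df_1)$ for all $f_1, f_2 \in C^\infty(S^{d-1})$, which is the self-adjointness of $D$.

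The main obstacle I expect is bookkeeping rather than conceptual: making precise the identity $D^{d-1}(\xi\otimes\xi,\xi\otimes\xi,s(q_1,\ldots,q_n)(x)) = 0$ and extracting non-vanishing of the symbol cleanly from the Hodge-Lorentzian structure, and, for self-adjointness, carefully justifying the two integrations by parts with the Levi-Civita connection on the sphere so that the curvature terms reorganize exactly into the $h g_{S^{d-1}}$ correction — this is where one must be careful to invoke that $S^{d-1}$ has the right constant curvature for the shift operator $L$ to be the symmetric one appearing in the mixed-volume formula. Both points are standard in integral geometry and convex geometry respectively, so I would cite the mixed-volume integral representation and the positive-definiteness of the reverse Weingarten map for $C^2_+$ bodies, and keep the symbol computation to a short explicit paragraph.
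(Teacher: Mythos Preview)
Your identification of the principal symbol as $D^{d-1}(\xi\otimes\xi,\, s(q_1,\ldots,q_n)(x))$ is correct, and so is the self-adjointness argument via the symmetry of mixed volumes --- the paper disposes of self-adjointness in one sentence using exactly that fact. The problem is in the middle of your ellipticity argument, where you miscount slots. On the $(d-1)$-dimensional tangent space $T_xS^{d-1}$ the mixed discriminant $D^{d-1}$ is $(d-1)$-linear; since $s$ has degree $d-2$ there is exactly \emph{one} free slot left, not two. Thus expressions such as $D^{d-1}(p,p,s(q))$, $D^{d-1}(\xi\otimes\xi,\xi\otimes\xi,s(q))$, and $D^{d-1}(\xi\otimes\xi, g_{S^{d-1}}, s(q))$ each have one argument too many, and the whole Lorentzian-signature/null-vector detour (Theorem~\ref{thm:invarianceC}, Lemma~\ref{lemma:hodge}) simply does not apply here and should be removed.

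What survives of your argument is the correct kernel: each monomial contribution $D^{d-1}(\xi\otimes\xi, q^\alpha)$ with $|\alpha|=d-2$ is strictly positive for $\xi\neq 0$ (a rank-one nonnegative form in one slot, positive-definite forms in the remaining slots), and $s$, being dually Lorentzian and nonzero, has nonnegative coefficients not all zero. The paper makes this positivity explicit via the standard restriction identity for mixed discriminants,
\[
D^{d-1}\big(\xi\otimes\xi,\; s(q_1,\ldots,q_n)\big) \;=\; |\xi|^2\, D^{d-2}\!\big(s(q_1|_{\xi^\perp},\ldots,q_n|_{\xi^\perp})\big),
\]
reducing ellipticity to the positivity of a nonnegative combination of mixed discriminants of positive-definite forms on the $(d-2)$-dimensional space $\xi^\perp$. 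Once you drop the spurious second slot and the signature discussion, your plan coincides with the paper's.
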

\begin{proof}
	Fix $p\in S^{n-1}$ and $\xi\in T^*_p S^{n-1}$. Let $f$ be a smooth function on the sphere satisfying $f(p)=0$ and $df_p=\xi$. Since 
	\begin{align*} D(f^2)(p) & = 2 D^{d-1}( \xi\otimes \xi , s(q_1, \ldots, q_n))\\
		& = 2  D^{d-2} ( s(q_1|_{\xi^\perp} ,\ldots, q_n|_{\xi^\perp})) |\xi|^2\\ 
	\end{align*}
is non-zero, if $\xi \neq 0$, ellipticity follows.
Observe that the symmetry of the mixed volumes implies that $D$ is self-adjoint $\langle Df, g\rangle_{L^2} = \langle f, Dg\rangle_{L^2}$.
\end{proof}

\begin{proof}[Proof of Theorem~\ref{thm:genVal}] 
	(2) By \eqref{eq:n-1hom} and the definition of the convolution of valuations, the statement is equivalent to:
	\begin{equation}\label{eq:functionalAF}V(f,C_0, s(C_1,\ldots, C_n))^2  \geq V(f,f, s(C_1,\ldots, C_n))V(C_0,C_0, s(C_1,\ldots, C_n))
	\end{equation}
	holds for all $f\in C^\infty(S^{d-1})$ and equality holds if and only if there exists a constant $a$ such that $f-a h_{C_0}$ is a linear function. 
	
	There exist convex bodies $K_1,K_2$ with a smooth and strictly positively curved boundary such that $f=h_{K_1}-h_{K_2}$. Suppose that $h_{C_0}, h_{K_1}, h_{K_2}$ are linearly independently modulo linear functions. By the generalized AF inequality, Theorem ~\ref{thm:genAF}, the claim follows by applying Lemma~\ref{lemma:hodge} to the quadratic form 
	$$q(x)=V( (x_0C_0+ x_1 K_1 + x_2K_2)^2, s(C_1,\ldots, C_n)), \quad x\in \RR^3.$$
	If  the vectors $h_{C_0}, h_{K_1}, h_{K_2}$ are linearly dependent, we remove $h_{K_1}$ or $h_{K_2}$ and repeat the argument.

(1) As in \cite{KotrbatyWannerer:AF}, because of \eqref{eq:n-1hom} and \eqref{eq:1hom}, it suffices to show that if $g\in C^\infty(S^{d-1})$ is orthogonal to linear functions with respect to the $L^2$ inner product on $S^{n-1}$, then there exists $f\in C^\infty(S^{d-1})$ such that $Df=g$ and  
the kernel of $D$ consists precisely of   linear functions on $\RR^d$ restricted to $S^{d-1}$. 
	
To prove the latter suppose that  $Df=0$. Then  $V(f,C_0,s(C_1,\ldots, C_n))=0$  for every convex body $C_0$ in $\RR^n$. From the characterization of  equality cases in \eqref{eq:functionalAF} we obtain  that $f$ must be the restriction of a linear function on $\RR^d$. 
Conversely, it is straightforward to verify that $\nabla^2 f + fg_{S^{d-1}}=0$ if $f$ is  the restriction of a linear function on $\RR^d$ and hence $Df=0$.

 The proof is completed using the fact that  the self-adjointness and ellipticity of $D$ imply the orthogonal decomposition $C^\infty(S^{d-1})= \operatorname{im} D \oplus \ker D$.

\end{proof}

\subsection{Application to Log Concavity of Derived Polynomials}

We end with some purely combinatorial applications of our main result.

\begin{lemma}
	Let $s\in \RR[x_1,\ldots,x_n]$ be homogeneous of degree $d$.    Then for fixed $t_1,\ldots,t_n\in \mathbb R$,
	\begin{equation}\label{eq:s_operator_derived}
		\partial_s\Big( \big(\sum_i x_i\big)^j \big(\sum_i t_ix_i\big)^{d-j}\Big) =  j!(d-j)!s^{(j)}(t_1,\ldots,t_n).\end{equation}
\end{lemma}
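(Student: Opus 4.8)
The plan is to evaluate the left-hand side directly by expanding everything in terms of monomials and carefully tracking the action of $\partial_s$. The key observation is that $\partial_s = s(\partial_{x_1},\ldots,\partial_{x_n})$ is a constant-coefficient operator, so it suffices to understand how it acts on monomials $x^\alpha$, and we know $\partial_s(x^\alpha) = \sum_\beta \lambda_\beta \partial^\beta(x^\alpha) = \sum_\beta \lambda_\beta \frac{\alpha!}{(\alpha-\beta)!} x^{\alpha-\beta}$ where $s = \sum_\beta \lambda_\beta x^\beta$.

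\smallskip

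\textbf{Step 1: Reduce to a monomial computation via the dual pairing.} I would first recall (or re-derive from the symbol computation in the proof of Theorem~\ref{thm:operator-criterion}) the basic identity that for a homogeneous polynomial $g$ of degree $d$ and the operator $\partial_s$ with $\deg s = d$, the quantity $\partial_s(g)$ is a constant equal to the pairing $\langle s, g\rangle$ obtained by extending bilinearly the rule $\langle x^\beta, x^\alpha\rangle = \alpha!\,\delta_{\alpha\beta}$. Under this pairing, $\partial_s\big((\sum_i x_i)^j(\sum_i t_i x_i)^{d-j}\big) = \langle s, (\sum_i x_i)^j(\sum_i t_i x_i)^{d-j}\rangle$, so the statement becomes an identity about pairing $s$ against this particular polynomial.

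\smallskip

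\textbf{Step 2: Identify the test polynomial with a generating function for $s^{(j)}$.} By the defining relation $s(x_1+u,\ldots,x_n+u) = \sum_{k=0}^d s^{(k)}(x_1,\ldots,x_n)u^k$, substituting $x_i \mapsto t_i$ and using homogeneity, one sees that $s^{(j)}(t_1,\ldots,t_n)$ is $\frac{1}{j!}$ times the coefficient extraction of $u^j$ in $s(t_1+u,\ldots,t_n+u)$, which by homogeneity of degree $d$ equals the coefficient of $u^j v^{d-j}$ in $s(t_1 v + u,\ldots,t_n v + u)$. The natural move is then to write $\partial_s$ applied to $(\sum_i x_i)^j(\sum_i t_i x_i)^{d-j}$ using the product structure: since $\partial_s$ is the operator whose symbol-style pairing reads off coefficients, and since $(\sum x_i)^j(\sum t_i x_i)^{d-j}$ is (up to the multinomial normalization) dual to the substitution operator $f \mapsto $ (coefficient of $u^jv^{d-j}$ in $f(\ldots + u, \ldots)$ with the $t_i$-weighting), applying $\partial_s$ to it should reproduce exactly $j!(d-j)!$ times that coefficient of $s$. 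Concretely, I would verify this on a single monomial $x^\alpha$ with $|\alpha| = d$: expand $(\sum_i x_i)^j = \sum_{|\gamma|=j}\binom{j}{\gamma}x^\gamma$ and $(\sum_i t_i x_i)^{d-j} = \sum_{|\delta|=d-j}\binom{d-j}{\delta}t^\delta x^\delta$, apply $\partial^\alpha$ to the product, and observe that only the term with $\gamma + \delta = \alpha$ survives with value $\alpha!$; summing over the decompositions $\gamma+\delta=\alpha$ with $|\gamma|=j$ and matching against the binomial expansion of $s^{(j)}$ gives the claimed constant.

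\smallskip

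\textbf{Main obstacle.} The only real bookkeeping difficulty is keeping the three families of multinomial coefficients ($\binom{j}{\gamma}$, $\binom{d-j}{\delta}$, and the factorials $\alpha!/(\alpha-\beta)!$ from differentiation) aligned so that the sum collapses cleanly to $j!(d-j)!$ times the coefficient of $x^\alpha$ in $s^{(j)}$; the identity $\binom{j}{\gamma}\binom{d-j}{\delta} = \frac{j!(d-j)!}{\gamma!\,\delta!}$ together with $\gamma!\,\delta! \cdot \frac{(\gamma+\delta)!}{\gamma!\,\delta!} = (\gamma+\delta)!$ is what makes it work, but it requires care. An alternative, perhaps cleaner route is to prove the identity first for $s$ a single monomial $x^\alpha$ by a direct computation — in which case $s^{(j)} = \binom{|\alpha|}{j}$-type elementary expression via $\prod x_i \mapsto \prod(x_i+u)$ expanded — and then conclude by linearity in $s$ on both sides; I expect this to be the path of least resistance, and I would present it that way.
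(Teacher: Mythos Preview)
Your approach is correct and will go through: expanding $(\sum_i x_i)^j$ and $(\sum_i t_i x_i)^{d-j}$ by the multinomial theorem, applying $\partial^\alpha$, and matching with the binomial expansion of $(x+u)^\alpha$ does yield the identity monomial by monomial, after which linearity in $s$ finishes the argument. The bookkeeping you flag as the main obstacle is exactly what must be checked, and it does collapse as you anticipate.

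The paper, however, takes a quite different and much shorter route. It first observes the easy special case $j=0$, namely $\partial_s\big((\sum_i t_i x_i)^d\big) = d!\,s(t_1,\ldots,t_n)$. It then performs the substitution $t_i \mapsto t_i + r$ for a new scalar parameter $r$; the right-hand side becomes $d!\sum_j s^{(j)}(t) r^j$ by the very definition of $s^{(j)}$, while on the left the linear form factors as $r\sum_i x_i + \sum_i t_i x_i$, so the binomial theorem in $r$ expands $(\sum_i (t_i+r)x_i)^d$ as $\sum_j \binom{d}{j}(\sum_i x_i)^j(\sum_i t_i x_i)^{d-j} r^j$. Comparing coefficients of $r^j$ on both sides immediately gives the lemma. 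This generating-function trick sidesteps all of the multinomial accounting you are proposing to do by hand; your approach has the advantage of being completely explicit at the level of coefficients, but the paper's argument is cleaner and reveals why the constants $j!(d-j)!$ appear, namely from $d!/\binom{d}{j}$.
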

\begin{proof}
	Clearly
	$$\partial_s\Big( \big(\sum_i t_ix_i\big)^d\Big) = d!  s(t_1,\ldots,t_n).$$
	Replacing $t_i$ with $r+t_i$ for $r\in \mathbb R$ yields
	$$\partial_s\Big( \big(\sum_i (t_i+r)x_i\big)^d\Big) = d! s(t_1+r,\ldots,t_n+r) =d! \sum_{j=0}^d s^{(j)}(t_1,\ldots,t_n) r^j.$$
	But
	$$\Big(\sum_i (t_i+r)x_i\Big)^d  = \Big(r \sum_i x_i + \sum_i t_ix_i\Big)^d = \sum_{j=0}^d \binom{d}{j} \Big(\sum_i x_i\Big)^j \Big(\sum_i t_i x_i\Big)^{d-j} r^j.$$
	Using linearity of $\partial_s$ and comparing coefficients of $r^j$ gives \eqref{eq:s_operator_derived}.
\end{proof}

\begin{proposition}\label{prop:duallorentizanlogconcave}
	Let $s\in \RR[x_1,\ldots,x_n]$ be dually Lorentzian of degree $d$ and $t_1,\ldots,t_n\in \mathbb R_{\ge 0}$.  Then the polynomial
	$$ \sum_{j=0}^d \binom{d}{j} s^{(j)}(t_1,\ldots,t_n) u^j v^{d-j}\in \mathbb R[u,v]$$
	is Lorentzian.
	
\end{proposition}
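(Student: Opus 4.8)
The plan is to reduce the claim to Theorem~\ref{thm:linear-trafo} (stability of the dual Lorentzian property under substitution of non‑negative linear forms), together with the elementary observation that in two variables the operation $P\mapsto P^\vee$ is, up to a normalization, just reversal of the coefficient sequence, i.e.\ the swap $u\leftrightarrow v$.

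First I would rewrite the target polynomial structurally. Since $s$ is homogeneous of degree $d$, two homogeneous polynomials of degree $d$ in $(u,v)$ that agree at $v=1$ must coincide, and therefore
\[
\sum_{j=0}^d s^{(j)}(t_1,\dots,t_n)\,u^j v^{d-j} \;=\; s(u+t_1 v,\dots,u+t_n v)=:P(u,v),
\]
because the right‑hand side at $v=1$ equals $s(t_1+u,\dots,t_n+u)=\sum_j s^{(j)}(t)u^j$ by the definition of the derived polynomials. Since $t_1,\dots,t_n\ge 0$, the linear map $(u,v)\mapsto(u+t_1 v,\dots,u+t_n v)$ has a non‑negative matrix, so Theorem~\ref{thm:linear-trafo} shows that $P$ is dually Lorentzian.

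Next I would compute $P^\vee$ with the admissible choice $\kappa=(d,d)$; a direct calculation gives
\[
P^\vee(u,v)=\sum_{j=0}^d \frac{s^{(j)}(t)}{j!\,(d-j)!}\,u^{d-j}v^{j},
\]
which is Lorentzian because $P$ is dually Lorentzian. The Lorentzian property is invariant under permuting the variables and under scaling by the positive constant $d!$, hence
\[
d!\cdot P^\vee(v,u)=\sum_{j=0}^d \binom{d}{j}\, s^{(j)}(t)\,u^j v^{d-j}
\]
is Lorentzian, which is exactly the assertion. (If $s=0$ the statement is trivial, so one may assume $s\neq 0$, in which case $P$ has degree exactly $d$ and everything above is well defined.)

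I do not expect a genuine obstacle here, but there is a tempting dead end to avoid: one might hope to produce $\sum_j\binom{d}{j}s^{(j)}(t)u^jv^{d-j}$ by applying $\partial_s$ (in the $x$‑variables) to a Lorentzian polynomial such as $\bigl(u\sum_i x_i+v\sum_i t_i x_i\bigr)^d$, but this polynomial is \emph{not} Lorentzian in $\RR[x_1,\dots,x_n,u,v]$ — the quadratic form $u\sum_i x_i+v\sum_i t_i x_i$ of which it is the $d$‑th power has two positive eigenvalues — so one must instead route through the dual class as above. The only point requiring care is the factorial bookkeeping and the swap $u\leftrightarrow v$ relating $P^\vee$ to the normalization of the desired polynomial.
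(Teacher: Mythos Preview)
Your argument is correct and takes a genuinely different route from the paper's. You pass through Theorem~\ref{thm:linear-trafo}: the substitution $x_i\mapsto u+t_iv$ shows $P(u,v)=s(u+t_1v,\dots,u+t_nv)=\sum_j s^{(j)}(t)\,u^jv^{d-j}$ is dually Lorentzian, and in two variables $P^\vee$ is (up to the swap $u\leftrightarrow v$ and the factor $d!$) exactly the normalized target polynomial. The paper instead uses the operator criterion (Theorem~\ref{thm:operator-criterion}) directly: it applies $\partial_s$ to the Lorentzian polynomial
\[
p(x,u,v)=\Big(\sum_i x_i+v\Big)^{d}\Big(\sum_i t_ix_i+u\Big)^{d},
\]
a product of $d$th powers of non-negative linear forms, and then restricts to $x=0$; the identity $\partial_s\big((\sum_i x_i)^j(\sum_i t_ix_i)^{d-j}\big)=j!(d-j)!\,s^{(j)}(t)$ extracts the coefficients. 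Your proof is shorter and needs only the substitution theorem and the definition of the dual; the paper's version has the virtue of exhibiting the result as a direct consequence of its main theorem. One small correction to your closing remark: the ``dead end'' you flag is real, but the conclusion that one must route through the dual class is not---the paper does go via $\partial_s$, just with the product polynomial above rather than a single $d$th power.
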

\begin{proof}
	Since each $t_i\ge 0$,
	$$p(x_1,\ldots,x_n,u,v) = \left(\sum_i x_i + v\right)^{d} \left( \sum_i t_i x_i + u\right)^d$$
	is Lorentzian.  We may write this as 
	$$ p = \sum_{j=0}^d \binom{d}{j} \binom{d}{d-j} \left(\sum_i x_i\right)^j \left(\sum_i t_i x_i\right)^{d-j} u^j v^{d-j} + \cdots$$
	where the $\cdots$ are terms that are of degree different to $d$ in $x$.   So from Theorem \ref{thm:operator-criterion} $\partial_s p|_{x_1=\cdots=x_n=0}$ is Lorentzian, which by  \eqref{eq:s_operator_derived} is the statement we want.   \end{proof}

\begin{corollary}\label{cor:duallorentizanlogconcave}
	Let $s\in \RR[x_1,\ldots,x_n]$ be dually Lorentzian of degree $d$ and $t_1,\ldots,t_n\in \mathbb R_{\ge 0}$. Then the map $$j\mapsto s^{(j)}(t_1,\ldots,t_n)$$ for $j=0,\ldots, d$ is log concave.
\end{corollary}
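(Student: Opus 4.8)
The statement is essentially a repackaging of Proposition~\ref{prop:duallorentizanlogconcave} once one recalls what it means for a \emph{bivariate} polynomial to be Lorentzian, so the plan is short. First I would fix $t_1,\dots,t_n\in\mathbb{R}_{\geq 0}$ and abbreviate $a_j := s^{(j)}(t_1,\dots,t_n)$ for $j=0,\dots,d$. Proposition~\ref{prop:duallorentizanlogconcave} asserts precisely that
\[
  g(u,v)\;=\;\sum_{j=0}^{d}\binom{d}{j}\,a_j\,u^j v^{d-j}
\]
is a Lorentzian polynomial in $\mathbb{R}[u,v]$. It then remains only to extract log-concavity of the sequence $(a_j)$ from the Lorentzian property of $g$.

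Next I would carry out the standard translation between bivariate Lorentzian polynomials and (denormalized) log-concave sequences; this is a well-known fact, cf.\ \cite{BrandenHuh}, but I would include the one-line verification for completeness. For $0\leq k\leq d-2$, apply the degree $d-2$ monomial differential operator $\partial_u^{k}\partial_v^{d-2-k}$ to $g$. Only the three monomials with $j\in\{k,k+1,k+2\}$ survive, and the result is a binary quadratic form
\[
  \partial_u^{k}\partial_v^{d-2-k} g \;=\; A\,u^2 + 2B\,uv + C\,v^2,
\]
where $A$, $B$, $C$ are explicit positive rational multiples of $a_{k+2}$, $a_{k+1}$, $a_k$ respectively, the multiples depending only on $k$, $d$ and the binomials $\binom{d}{k},\binom{d}{k+1},\binom{d}{k+2}$. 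Since all coefficients of $g$ are non-negative, so are the entries of the associated symmetric matrix; hence by Lemma~\ref{lem:sylvester-criterion} (or directly) the form has at most one positive eigenvalue iff $AC-B^2\leq 0$, i.e.\ $B^2\geq AC$. Substituting the explicit constants, all binomial factors cancel and this inequality becomes exactly $a_{k+1}^2\geq a_k a_{k+2}$. As $k$ was arbitrary, $j\mapsto a_j=s^{(j)}(t_1,\dots,t_n)$ is log-concave, which is the claim.

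I do not expect any genuine obstacle here: the corollary is immediate from Proposition~\ref{prop:duallorentizanlogconcave}, and the only point requiring care is the elementary bookkeeping with binomial coefficients establishing the equivalence $B^2\geq AC \iff a_{k+1}^2\geq a_k a_{k+2}$. Alternatively, one may simply invoke the characterization that the Lorentzian polynomials in two variables of degree $d$ are precisely those of the form $\sum_j\binom{d}{j}a_j u^j v^{d-j}$ with $(a_j)$ log-concave and having no internal zeros, and read off the conclusion at once.
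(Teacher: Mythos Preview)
Your proposal is correct and follows essentially the same route as the paper: both invoke Proposition~\ref{prop:duallorentizanlogconcave} and then the characterization of bivariate Lorentzian polynomials (the paper cites \cite[Example~2.3]{BrandenHuh} for ultra log-concavity of the coefficient sequence, which unwinds to exactly the log-concavity of $(a_j)$ you derive). Your explicit computation of $\partial_u^k\partial_v^{d-2-k}g=\tfrac{d!}{2}(a_{k+2}u^2+2a_{k+1}uv+a_kv^2)$ is correct and makes the binomial cancellation transparent, so the claimed equivalence $B^2\geq AC\iff a_{k+1}^2\geq a_k a_{k+2}$ goes through without issue.
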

\begin{proof}
	This follows from Proposition \ref{prop:duallorentizanlogconcave} and the fact that a homogeneous polynomial in two variables is Lorentzian if and only if its coefficient sequence is ultra log concave (see \cite[Example 2.3]{BrandenHuh}).
\end{proof}

\begin{remark}
	When $s$ is a Schur polynomial, the statement of Corollary \ref{cor:duallorentizanlogconcave} appears in \cite[Corollary 10.2]{RT20}.
\end{remark}

\begin{proposition}\label{prop:collectionofduallylorentzian}
	Let $s_1,\ldots,s_r\in \RR[x_1,\ldots,x_n]$ be dually Lorentzian of degree $d$.  Suppose that $\underline{t}_j\in \mathbb R^n_{\ge 0}$ for $j=1,\ldots,r$.   Then
	\begin{equation}\label{eq:multischur} \sum_{|\alpha|=d} \prod_{j=1}^r \binom{d}{\alpha_j} s_j^{(\alpha_j)}(\underline{t}_j) u^{\alpha}\in \mathbb R[u_1,\ldots,u_r]\end{equation}
	is Lorentzian.
\end{proposition}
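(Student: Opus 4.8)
The plan is to realize \eqref{eq:multischur} as a graded piece of a product of the binary Lorentzian forms produced by Proposition~\ref{prop:duallorentizanlogconcave}; the role of one extra auxiliary variable, shared by all the factors, will be precisely to encode the constraint $|\alpha|=d$. The step I expect to be the crux is finding this construction — naively taking products of one‑variable data gives a sum over all of $\{0,\dots,d\}^r$, not over the simplex $|k|=d$, and the shared variable is what fixes this.

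Concretely, for each $j=1,\dots,r$ I would apply Proposition~\ref{prop:duallorentizanlogconcave} to the dually Lorentzian polynomial $s_j$ of degree $d$ and the point $\underline t_j\in\RR^n_{\ge0}$ to conclude that the binary form
\[
P_j(u_j,v):=\sum_{k=0}^d\binom{d}{k}s_j^{(k)}(\underline t_j)\,u_j^{\,k}v^{\,d-k}
\]
is Lorentzian. Regarding each $P_j$ as an element of $\RR[u_1,\dots,u_r,v]$ via the coordinate projection $(u_1,\dots,u_r,v)\mapsto(u_j,v)$ — a linear map with non‑negative entries — Theorem~\ref{thm:changeVar}(1) keeps it Lorentzian, and then Theorem~\ref{thm:changeVar}(2) shows that
\[
Q(u,v):=\prod_{j=1}^r P_j(u_j,v)=\sum_{k\in\{0,\dots,d\}^r}\Big(\prod_{j=1}^r\binom{d}{k_j}s_j^{(k_j)}(\underline t_j)\Big)u^{k}\,v^{\,rd-|k|}
\]
is Lorentzian.

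Finally I would extract from $Q$ the part in which $v$ occurs to the exact power $(r-1)d$. Since $v$ is a single coordinate, this is the composition of two truncations — first discard every monomial with $\deg_v<(r-1)d$, then every monomial with $\deg_v>(r-1)d$ — so it preserves the Lorentzian property by Proposition~\ref{prop:truncation}. Because $rd-|k|=(r-1)d$ is equivalent to $|k|=d$, and $|k|=d$ forces $k_j\le d$ for every $j$, the result is exactly
\[
v^{\,(r-1)d}\sum_{|\alpha|=d}\Big(\prod_{j=1}^r\binom{d}{\alpha_j}s_j^{(\alpha_j)}(\underline t_j)\Big)u^{\alpha},
\]
which is therefore Lorentzian. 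Applying $\partial_v^{(r-1)d}$, which preserves the Lorentzian property (e.g.\ by \cite[Cor.~2.11]{BrandenHuh}, or because the linear polynomial $v$ is dually Lorentzian and Theorem~\ref{thm:operator-criterion} applies), strips off the factor $v^{(r-1)d}$ up to the constant $((r-1)d)!$, and hence \eqref{eq:multischur} is Lorentzian. (The degenerate cases $d\le1$ are covered by the same argument, the $P_j$ then being non‑negative constants, respectively linear forms with non‑negative coefficients.)
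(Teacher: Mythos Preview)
Your argument is correct. The route, however, differs from the paper's. The paper does not invoke Proposition~\ref{prop:duallorentizanlogconcave}; instead it works in the full set of auxiliary variables $x_{ij}$ ($i=1,\dots,n$, $j=1,\dots,r$), forms the single Lorentzian polynomial
\[
p=\prod_{j=1}^r\Big(\sum_i x_{ij}+v\Big)^{d}\Big(\sum_i t_{ij}x_{ij}+u_j\Big)^{d},
\]
applies the operator $\partial_{s_1}\circ\cdots\circ\partial_{s_r}$ (each $\partial_{s_j}$ acting on the $x_{\cdot j}$-variables), then takes $\partial_v^{dr-d}$ and specializes $v=0$ and all $x_{ij}=0$, invoking the lemma \eqref{eq:s_operator_derived} blockwise to identify the result. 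Your approach is more modular: you first use Proposition~\ref{prop:duallorentizanlogconcave} as a black box to collapse each $(s_j,\underline t_j)$ to a two-variable Lorentzian form $P_j(u_j,v)$, and then only need the product rule (Theorem~\ref{thm:changeVar}) and truncation (Proposition~\ref{prop:truncation}) in $r+1$ variables to extract the simplex $|\alpha|=d$. This is shorter and highlights Proposition~\ref{prop:duallorentizanlogconcave} as the essential $r=1$ case; the paper's version is in effect re-running the proof of Proposition~\ref{prop:duallorentizanlogconcave} simultaneously for all $j$.
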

\begin{proof}

	Consider variables $x_{ij}$ for $i=1,\ldots,n$ and $j=1,\ldots,r$ and consider $s_{j}\in \mathbb R[x_{1j},\ldots x_{nj}]$.      Then inside  $\mathbb R[x_{ij},u_1,\ldots,u_r,v]$ the polynomial
	$$p:= \prod_j \Big(\sum_{i} x_{ij} +v\Big)^d \big(\sum_i t_{ij} x_{ij} + u_j\Big)^d$$
	is Lorentzian (using Theorem~\ref{thm:changeVar} which says being Lorentzian is preserved under taking products).
		
	Set $\partial_s = \partial_{s_1}\circ \partial_{s_2} \circ \cdots \circ \partial_{s_r}$ (where we think of $s_j$ as acting purely on the variables $x_{ij}$ for $i=1,\ldots, n$).  Then Theorem \ref{thm:operator-criterion} implies $\partial_s p$ is also Lorentzian, and hence so is 
	$$q : =  \partial_v^{dr-d}  \partial_s p |_{v=0} = \partial_s \partial_v^{dr-d}   p |_{v=0}.$$
	But the coefficient of $v^{dr-d}$  in $p$ is precisely	
	$$\sum_{|\alpha|=d} \prod_{j=1} \binom{d}{\alpha_j}^2 \Big(\sum_i x_{ij}\Big)^{\alpha_j} \Big(\sum_i t_{ij} x_{ij}\Big)^{d-\alpha_j} u^{\alpha_j} + \cdots$$
where the unwritten terms are different from $d$ in $x_{1j},\ldots,x_{rj}$ for some $j$.   	Hence by  \eqref{eq:s_operator_derived} the restriction of $q$ to $x_{ij}=0$ for all $i,j$ is a positive constant times \eqref{eq:multischur}, completing the proof.
\end{proof}

\begin{remark}\label{rmk:RT4}
	In \cite{RT20} the authors ask about the existence of an analog of Proposition \ref{prop:collectionofduallylorentzian} for the derived Schur classes of  a collection of ample vector bundles, and our result here  gives such an analog when all the vector bundles can be written as a direct sum of line bundles. 
\end{remark}


\printbibliography

\end{document}